\title{On projections of the tails of a power}
\author{Samuel M. Corson}
\address{School of Mathematics, University of Bristol, Fry Building, Woodland Road, Bristol, BS8 1UG, United Kingdom.}
\email{sammyc973@gmail.com}
\author{Saharon Shelah}
\address{Einstein Institute of Mathematics, The Hebrew University of Jerusalem, Jerusalem 91904 Israel}
\address{Department of Mathematics, Rutgers University, Piscataway, NJ 08854 USA}
\email{shelah@math.huji.ac.il}
\theoremstyle{definition}\newtheorem{theorem}{Theorem}
\theoremstyle{definition}
\newtheorem*{hugetheorem}{Main Theorem}
\numberwithin{theorem}{section}
\theoremstyle{definition}\newtheorem{corollary}[theorem]{Corollary}
\theoremstyle{definition}
\theoremstyle{definition}\newtheorem{definition}[theorem]{Definition}
\theoremstyle{definition}
\theoremstyle{definition}\newtheorem{example}{Example}
\theoremstyle{definition}\newtheorem{remark}[theorem]{Remark}
\theoremstyle{definition}
\theoremstyle{definition}\newtheorem{lemma}[theorem]{Lemma}
\theoremstyle{definition}
\theoremstyle{definition}
\theoremstyle{definition}
\theoremstyle{definition}
\theoremstyle{definition}\newtheorem{obs}[theorem]{Observation}
\theoremstyle{definition}\newtheorem{construction}[theorem]{Construction}
\theoremstyle{definition}\newtheorem{notation}[theorem]{Notation}
\newcommand{\Po}{\mathcal{P}}
\newcommand{\Card}{\operatorname{Card}}
\newcommand{\End}{\operatorname{End}}
\newcommand{\im}{\operatorname{im}}
\newcommand{\Atoms}{\textbf{Atoms}}
\newcommand{\B}{\mathfrak{B}}
\newcommand{\Av}{\textbf{Av}}
\newcommand{\cof}{\textbf{cof}}
\begin{document}

\keywords{endomorphism, idempotence, inaccessible cardinal}
\subjclass[2010]{20A15, 03E55, 03G05}
\thanks{The work of the first author was supported by the Additional Funding Programme for Mathematical
Sciences, delivered by EPSRC (EP/V521917/1) and the Heilbronn Institute
for Mathematical Research.  Also by the Basque Government Grant IT1483-22 and Spanish Government Grants PID2019-107444GA-I00 and PID2020-117281GB-I00.  The work of the second author was supported by JSF 1838/19: The Israel Science Foundation (2019-2023) and Rutgers 2018 DMS 1833363: NSF DMS Rutgers visitor program (PI S. Thomas) (2018-2022).  Paper number 1233 on Shelah's archive}

\begin{abstract}  Let $\kappa$ be an inaccessible cardinal, $\mathfrak{U}$ be a universal algebra, and $\sim$ be the equivalence relation on $\mathfrak{U}^{\kappa}$ of eventual equality.  From mild assumptions on $\kappa$ we give general constructions of $\mathcal{E} \in \End(\mathfrak{U}^{\kappa}/\sim)$ satisfying $\mathcal{E} \circ \mathcal{E} = \mathcal{E}$ which do not descend from $\Delta \in \End(\mathfrak{U}^{\kappa})$ having small strong supports.  As an application there exists an $\mathcal{E} \in \End(\mathbb{Z}^{\kappa}/\sim)$ which does not come from a $\Delta \in \End(\mathbb{Z}^{\kappa})$.
\end{abstract}

\maketitle

\begin{section}{Introduction}

Questions as to whether a mapping on a quotient can be lifted to the original structure are abundant in mathematics (in algebraic topology, group theory, etc.).  When the mapping cannot be (easily) lifted then it is of particular interest and intrinsic to the quotient structure.  Our focus is on the power of a universal algebra and its natural quotient- the set of ``tails''.

Let $\kappa$ be an infinite regular cardinal and $\mathfrak{U}$ a universal algebra.  We take each ordinal to be the set of ordinals strictly below it.  Write $\sigma \sim \tau$ if $\sigma, \tau \in \mathfrak{U}^{\kappa}$ are eventually equal (i.e. $|\{\alpha \in \kappa: \sigma(\alpha) \neq \tau(\alpha)\}| < \kappa$) and write $p_{\sim}: \mathfrak{U}^{\kappa} \rightarrow \mathfrak{U}^{\kappa}/\sim$ for the quotient homomorphism.  It is not difficult to produce a $\Delta \in \End(\mathfrak{U}^{\kappa})$ for which there is no $\mathcal{E} \in \End(\mathfrak{U}^{\kappa}/\sim)$ which makes the following diagram commute:

$$\xymatrix{\mathfrak{U}^{\kappa} \ar[r]^{\Delta} \ar[d]^{p_{\sim}}&
\mathfrak{U}^{\kappa} \ar[d]^{p_{\sim}} \\ \mathfrak{U}^{\kappa}/\sim
\ar[r]^{\mathcal{E}} & \mathfrak{U}^{\kappa}/\sim    }$$

\noindent This happens precisely when $\Delta$ makes essential use of early coordinates in defining cofinal coordinates.  For example the homomorphism $(\Delta\sigma)(\alpha) = \sigma(0)$ for all $\alpha \in \kappa$ does not descend to an $\mathcal{E} \in \End(\mathfrak{U}^{\kappa}/\sim)$ when the underlying set of $\mathfrak{U}$ has at least two elements.  By contrast it is generally less clear how to produce an $\mathcal{E}$ which is not descended from a $\Delta$.  In the cadence of calculus: Does there exist epsilon for which there is no delta?  We'll restrict our attention to the especially challenging situation where $\mathcal{E}$ is required to be a \emph{homomorphic projection} ($\mathcal{E} \circ \mathcal{E} = \mathcal{E}$).  The following classical example will motivate us.

\begin{example}\label{toQ}
Letting $\kappa = \aleph_0$, it is well-known that the abelian group $\mathbb{Z}^{\aleph_0}/\sim$ has a direct summand which is isomorphic to $\mathbb{Q}$.  Taking $\mathcal{E} \in  \End(\mathbb{Z}^{\aleph_0}/\sim)$ to be a homomorphic projection whose image is $\mathbb{Q} \leq \mathbb{Z}^{\aleph_0}/\sim$ we claim there is no $\Delta$ with $\mathcal{E} \circ p_{\sim} = p_{\sim} \circ \Delta$.

To see this, suppose such a $\Delta$ exists.  If $\Delta(\mathbb{Z}^{\aleph_0})$ is uncountable then $p_{\sim} \circ \Delta(\mathbb{Z}^{\aleph_0}) \simeq \mathbb{Q}$ is uncountable since $p_{\sim}$ has countable kernel.  Thus $\Delta(\mathbb{Z}^{\aleph_0})$ is countable, hence free abelian \cite[Theorem 3.8.2]{Fu}, and therefore finitely generated \cite[Lemmas 13.2.1, 13.2.3]{Fu}, and so $p_{\sim} \circ \Delta(\mathbb{Z}^{\aleph_0}) \simeq \mathbb{Q}$ is finitely generated, contradiction.
\end{example}

It is not obvious how to produce such an $\mathcal{E}$ when $\aleph_0$ is replaced with an uncountable regular cardinal $\kappa$, since $\mathbb{Z}^{\kappa}/\sim$ will then be $\aleph_1$-free abelian (hence cotorsion-free) \cite{Wald}, so in particular we cannot project to a nonzero divisible subgroup.  A large cardinal will allow us to determine such a homomorphism, and the map will be of the flavor of the next example.

\begin{example}\label{toZ}  Let $\kappa$ be a measurable cardinal and $\mathcal{U}$  an ultrafilter on $\kappa$ witnessing this.  Define $\Delta: \mathbb{Z}^{\kappa} \rightarrow \mathbb{Z}^{\kappa}$ by letting $\Delta\sigma$ be the constant sequence $(z_0)_{\alpha \in \kappa}$, where $\{\alpha \in \kappa: \sigma(\alpha) = z_0\} \in \mathcal{U}$.  As $\mathcal{U}$ is nontrivial and $\kappa$-complete, $\Delta$ descends to a homomorphic projection $\mathcal{E}$ with $\mathcal{E} \circ p_{\sim} = p_{\sim} \circ \Delta$.
\end{example}

Of course the $\mathcal{E}$ in Example \ref{toZ} descends from a $\Delta$, but the map is defined set-theoretically.  Moreover the construction is quite general as one can replace $\mathbb{Z}$ with $\mathfrak{U}$ having underlying set smaller than $\kappa$.

We'll provide two definitions toward stating the main theorem.  Letting $\mathfrak{U} = (\chi, \mathcal{S})$ we say a subset $Y \subseteq \kappa$ is a \emph{strong support} for homomorphism $\Theta: \mathfrak{U}^{\kappa} \rightarrow \mathfrak{M}$ if $\sigma \upharpoonright Y = \tau \upharpoonright Y$ implies $\Theta\sigma = \Theta\tau$ and for each $y \in Y$ and $\sigma \in \mathfrak{U}^{\kappa}$ there exists $x \in \chi$ such that $\tau \in \mathfrak{U}^{\kappa}$ given by

\[
\tau(\alpha) = \left\{
\begin{array}{ll}
\sigma(\alpha)
                                            & \text{if } \alpha \neq y , \\
x                                        & \text{if } \alpha = y
\end{array}
\right.
\]

\noindent has $\Theta \sigma \neq \Theta \tau$.  A homomorphism may or may not have a strong support.  If $\mathbb{F}$ is a field then we can take $\Theta: \mathbb{F}^{\kappa} \rightarrow \mathbb{F}$ to be any homomorphism which extends the homomorphism $\bigoplus_{\kappa}\mathbb{F} \rightarrow \mathbb{F}$ which takes the sum of the entries (using a vector space argument).  The strong support for this $\Theta$ is the entire set $\kappa$.  On the other hand if $\Theta$ is an extension of the map which takes the constant $(1_{\mathbb{F}})_{\alpha \in \kappa}$ to $1_{\mathbb{F}}$ and all elements of $\bigoplus_{\kappa}\mathbb{F}$ to $0_{\mathbb{F}}$, then $\Theta$ has no strong support.

We say a homomorphism $\Av: \mathfrak{U} \times \mathfrak{U} \rightarrow \mathfrak{U}$ is a \emph{changing average} if for all $x_0, x_1 \in \chi$ we have $\Av(x_0, x_1) = \Av(x_1, x_0)$ and $\Av(x_0, x_0) = x_0$ and there exist $x_2, x_3 \in \chi$ for which $x_2 \neq \Av(x_2, x_3) \neq x_3$.  In this paper we give two technical constructions which specialize to the following (see Remark \ref{Whatiskappa} Theorems \ref{firstconstruction} and \ref{combinginConstruction2}).

\begin{hugetheorem}\label{maintheorem}  Suppose that $\kappa$ is inaccessible with $\kappa^+ = 2^{\kappa}$.  If $\mathfrak{U} = (\chi, \mathcal{S})$ is a universal algebra with $2 \leq |\chi| < \kappa$ then there exists a homomorphic projection $\mathcal{E} \in \End(\mathfrak{U}^{\kappa}/\sim)$ such that

\begin{enumerate}

\item the image of $\mathcal{E}$ is isomorphic to $\mathfrak{U}^{\kappa}/\sim$; and

\item for any $\Delta \in \End(\mathfrak{U}^{\kappa})$ with $\mathcal{E} \circ p_{\sim} = p_{\sim} \circ \Delta$ we have 

\begin{center}

$|\{\alpha \in \kappa: \pi_{\alpha} \circ \Delta \text{ has no strong support of cardinality} \leq \lambda\}| = \kappa$

\end{center}

\noindent for each $\lambda < \kappa$.
\end{enumerate}

\noindent Additionally if $\mathfrak{U}$ has a changing average,

\begin{enumerate}
\item[(3)]  there exists $\sigma \in \mathfrak{U}^{\kappa}$ such that when $\mathcal{E}[\sigma]_{\sim} = [\tau]_{\sim}$ we get $$|\{\alpha \in \kappa: \sigma(\alpha) = \tau(\alpha)\}| < \kappa.$$

\end{enumerate}

\end{hugetheorem}

When there is no changing average the projection $\mathcal{E}$ essentially spreads some coordinates sideways in a complicated way.  When there is a changing average we obtain a projection which is more algebraically elaborate.  Of course, the theorem is most interesting for very specific algebras $\mathfrak{U}$.  For example, if $\mathfrak{U}$ is a slender abelian group $A$ (i.e. $A$ is torsion-free and includes no subgroups isomorphic to $\mathbb{Q}$, $\mathbb{Z}^{\aleph_0}$, or the p-adics $J_p$ with $p$ a prime) we know that every homomorphism from $A^{\kappa}$ to $A$ has a finite strong support when $\kappa$ is smaller than the least measurable cardinal \cite[Corollary 13.2.10]{Fu}.  The following is immediate.

\begin{corollary}\label{niceZ}  Let $A$ be a nontrivial slender abelian group.  If $\kappa$ is an inaccessible which is smaller than any measurable cardinal and $\kappa^+ = 2^{\kappa}$ then there exists a homomorphism projection $\mathcal{E} \in \End(A^{\kappa}/\sim)$ for which there is no $\Delta \in \End(A^{\kappa})$ such that $\mathcal{E} \circ p_{\sim} = p_{\sim} \circ \Delta$.  Moreover if $A = 2A$ then there is $\sigma \in A^{\kappa}$ for which $|\{\alpha \in \kappa: \sigma(\alpha) = \tau(\alpha)\}| < \kappa$ for $\mathcal{E}[\sigma]_{\sim} = [\tau]_{\sim}$. 
\end{corollary}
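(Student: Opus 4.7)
The plan is to apply the Main Theorem with $\mathfrak{U} = A$ and to derive the non-existence of $\Delta$ from the standard slenderness characterisation \cite[Corollary 13.2.10]{Fu}. First I would invoke the Main Theorem to produce a homomorphic projection $\mathcal{E} \in \End(A^{\kappa}/\sim)$ satisfying clauses (1) and (2). Suppose, toward contradiction, that some $\Delta \in \End(A^{\kappa})$ satisfies $\mathcal{E} \circ p_{\sim} = p_{\sim} \circ \Delta$. For every $\alpha \in \kappa$ the composite $\pi_{\alpha} \circ \Delta : A^{\kappa} \to A$ is a homomorphism from a power of a slender group back into the slender group, and since $\kappa$ lies strictly below every measurable cardinal, the cited Fuchs result hands us a finite strong support.

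Next I would read clause (2) of the Main Theorem at $\lambda = \aleph_0$, which is permissible since $\kappa$ is uncountable. That clause forces the existence of $\kappa$-many coordinates $\alpha$ at which $\pi_{\alpha} \circ \Delta$ admits no strong support of cardinality $\leq \aleph_0$, which contradicts the uniform existence of finite strong supports established in the previous paragraph. Therefore no lifting $\Delta$ can exist, which is the main conclusion. The only place where care is needed is matching the Main Theorem's support condition (indexed by $\lambda < \kappa$) with Fuchs's bound $\aleph_0$, but this is immediate from $\aleph_0 < \kappa$.

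For the \emph{moreover} statement, I would verify that $A = 2A$ supplies a changing average on $A$, thereby activating clause (3) of the Main Theorem. Torsion-freeness of $A$ (automatic from slenderness) together with $A = 2A$ makes $A$ uniquely $2$-divisible, so the formula $\Av(x,y) = (x+y)/2$ defines a symmetric homomorphism $A \times A \to A$ fixing the diagonal; any distinct $x_2, x_3 \in A$ (available by nontriviality) satisfy $\Av(x_2,x_3) \neq x_2, x_3$ by a one-line cancellation. Clause (3) then yields a $\sigma$ whose image under $\mathcal{E}$ meets $\sigma$ on fewer than $\kappa$ coordinates. I expect no real obstacle beyond the above bookkeeping match between clauses (2)--(3) of the Main Theorem and well-known slender-group facts.
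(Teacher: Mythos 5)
Your proposal is correct and follows exactly the route the paper intends: the corollary is derived by combining clause (2) of the Main Theorem (read at any infinite $\lambda<\kappa$) with the Fuchs fact that homomorphisms $A^{\kappa}\to A$ from a slender $A$ below the first measurable have finite strong supports, and the \emph{moreover} clause by observing that $\Av(x,y)=(x+y)/2$ is a changing average when $A=2A$ and $A$ is torsion-free. This is precisely the argument the paper sketches, so no further comment is needed.
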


For the last sentence in the corollary we use the average function $\Av(x_0, x_1) = \frac{x_0 + x_1}{2}$.  We may let $A$ in Corollary \ref{niceZ} be $\mathbb{Z}$ or more generally a free abelian group of rank $< \kappa$, and for an application which involves the second sentence of the corollary we can take $A = \mathbb{Z}[\frac{1}{2}]$.  The hypothesis $\kappa^+ = 2^{\kappa}$ in the main theorem can be relaxed in a technical way (see Remark \ref{Whatiskappa}).  The assumptions on $\kappa$ used in our theorem ($\kappa$ is inaccessible and $\kappa^+ = 2^{\kappa}$) are quite mild compared to that used in Example \ref{toZ} ($\kappa$ is measurable); if $\mu$ is the least measurable cardinal then the set of inaccessible cardinals smaller than $\mu$ is of cardinality $\mu$ \cite[Lemma 10.21]{J}.

The proof of our theorem will involve modifications of the main arguments of \cite{ShSt}.  We give some preliminaries in Section \ref{prelim} and then give the first and second constructions in Sections \ref{Firstconstruction} and \ref{Secondconstruction} respectively.

\end{section}

\begin{section}{Some preliminaries}\label{prelim}

The reader may reference \cite{J} for most set-theoretic terminology.  We review some notational conventions below.

\begin{notation}  We will let 

\begin{itemize}

\item $\Card$ denote the class of cardinals;

\item $[X]^{\lambda} = \{Y: Y\subseteq X, |Y| = \lambda\}$;

\item $[X]^{<\lambda} = \{Y: Y \subseteq X, |Y| < \lambda\}$; 

\item $\Po(X)$ be the powerset of $X$;

\item $\text{}^YX$ be the set of functions from $Y$ to $X$;

\item $\cof(\alpha)$ denotes the cofinality of ordinal $\alpha$; and

\item $X \equiv_{\kappa} Y$ means that the symmetric difference $(X \cup Y) \setminus (X \cap Y)$ has cardinality less than $\kappa$.

\end{itemize}

\end{notation}

\begin{definition}  For functions $f, g \in \text{}^{\kappa}\Card$ we'll let $\mathfrak{d}_{f, g}$ denote the least cardinal of a family $\mathcal{D} \subseteq \prod_{\nu \in \kappa} [f(\nu)]^{g(\nu)}$ such that for each $F \in \prod_{\nu \in \kappa} f(\nu)$ there are $G \in \mathcal{D}$ and $\beta \in \kappa$ with $F(\nu) \in G(\nu)$ for $\nu \in \kappa \setminus \beta$.
\end{definition}

For the following, see \cite[Hypothesis 3.1]{ShSt2}.

\begin{definition}\label{dagger}  For a cardinal $\kappa$ we'll write $\dagger(\kappa, f, g)$ if $f, g \in   \text{}^{\kappa}(\kappa \cap \Card)$ are such that $f(\nu)$ and $g(\nu)$ infinite regular for all $\nu \in \kappa$ and 

\begin{enumerate}

\item $2^{g(\nu)} < f(\nu)$;

\item if $\nu \in \nu^*$ then $|\nu| \leq g(\nu) \leq g(\nu^*)$; and

\item $\mathfrak{d}_{2^{f}, g} = \kappa^+$.

\end{enumerate}

\end{definition}

\begin{remark}\label{Whatiskappa}  It is clear from (1), (2) of Definition \ref{dagger} that if $\dagger(\kappa, f, g)$ then $\nu < \kappa$ implies $2^{|\nu|} < \kappa$, and also $\lim_{\nu \rightarrow \kappa} g(\nu) = \kappa$.  We will also want $\kappa$ to be a regular cardinal for our arguments, so our attention will be strictly on $\kappa$ which is inaccessible.

When $\kappa$ is inaccessible with $\kappa^+ = 2^{\kappa}$ we can take $g$ and $f$ to be any functions satisfying conditions (1) and (2), take an enumeration $\{H_{\xi}\}_{\xi \in \kappa^+}$ of $\prod_{\nu \in \kappa} f(\nu)$, $e_{\xi}: \kappa \rightarrow \xi$ to be a bijection for each $\xi \in \kappa^+ \setminus \kappa$, and $G_{\xi}(\nu) = \{H_{e_{\xi(\eta)}}:\eta \in g(\nu)\}$ to witness $\dagger(\kappa, f, g)$.

It is also possible to have an inaccessible $\kappa$ and $f, g$ such that $\dagger(\kappa, f, g)$ and $\kappa^+ < 2^{\kappa}$  (see \cite[Corollary 3.2]{ShSt2}).

\end{remark}

The following appears as \cite[Lemma 3.1]{ShSt2}; we will provide the proof in extenso since that document has not been refereed and also for the sake of completeness.

\begin{lemma}\label{theZ} If $\kappa$ is inaccessible and $\dagger(\kappa, f, g)$ then there is a collection $\{Z_{\eta, \zeta}\}_{\eta \in \kappa^+, \zeta \in \kappa}$ satisfying the following:

\begin{enumerate}

\item  $Z_{\eta, \zeta} \subseteq \eta$;

\item  $|Z_{\eta, \zeta}| \leq g(\zeta)$;

\item  if $\zeta < \zeta^*$ then $Z_{\eta, \zeta} \subseteq Z_{\eta, \zeta^*}$;

\item  $\bigcup_{\zeta \in \kappa} Z_{\eta, \zeta} = \eta$;

\item  if $\eta \in \eta^*$ then there is $\beta \in \kappa$ such that $Z_{\eta^*, \zeta} \cap \eta = Z_{\eta, \zeta}$ for $\zeta \in \kappa \setminus \beta$; and

\item  if $\eta \in Z_{\eta^*, \zeta}$ then $Z_{\eta^*, \zeta} \cap \eta = Z_{\eta, \zeta}$.

\end{enumerate}

\end{lemma}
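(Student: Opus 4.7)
My plan is a transfinite recursion on $\eta < \kappa^+$, splitting into four cases: $\eta \leq \kappa$, $\eta$ a successor, $\eta$ a limit with $\mathrm{cof}(\eta) < \kappa$, and $\eta$ a limit with $\mathrm{cof}(\eta) = \kappa$. For $\eta \leq \kappa$ I would simply set $Z_{\eta,\zeta} = \eta \cap g(\zeta)$; since $\lim_{\zeta\to\kappa}g(\zeta) = \kappa$ and $g$ is nondecreasing by Definition \ref{dagger}, properties (1)--(4) are immediate, and (5), (6) both reduce to the ordinal identity $(\eta^* \cap g(\zeta)) \cap \eta = \eta \cap g(\zeta)$ (noting for (6) that $\eta \in Z_{\eta^*,\zeta}$ forces $\eta < g(\zeta)$). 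For a successor $\eta = \eta' + 1$ with $\eta > \kappa$, I take $Z_{\eta,\zeta} = Z_{\eta',\zeta} \cup \{\eta'\}$; condition (2) is preserved since $g(\zeta)$ is infinite, and (3)--(6) follow from the induction hypothesis at $\eta'$.

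For a limit $\eta$ with $\mu := \mathrm{cof}(\eta) < \kappa$, I fix an increasing cofinal sequence $(\eta_i)_{i<\mu}$ and set $Z_{\eta,\zeta} = \bigcup_{i<\mu} Z_{\eta_i,\zeta}$ once $g(\zeta) \geq \mu$ (automatic for $\zeta \geq \mu$, since Definition \ref{dagger}(2) applied to $\mu \in \mu+1$ gives $|\nu| \leq g(\nu)$). The cardinality is bounded by $\mu \cdot g(\zeta) = g(\zeta)$, and (3)--(6) propagate from the $\eta_i$'s after uniformising the thresholds $\beta(\eta_i,\eta')$ over $i < \mu$, which is safe because $\mu < \kappa$ and $\kappa$ is regular.

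The main obstacle is $\mathrm{cof}(\eta) = \kappa$: the naive union over a cofinal $(\eta_\nu)_{\nu<\kappa}$ has cardinality $\kappa$, too large for (2), so one must select $g(\zeta)$-sized coherent approximations. This is precisely where $\dagger(\kappa,f,g)$ enters: it supplies a dominating family $\{G_\eta\}_{\eta<\kappa^+} \subseteq \prod_\nu [2^{f(\nu)}]^{g(\nu)}$ of cardinality $\kappa^+$, allotting exactly one witness per $\eta < \kappa^+$. My plan is, for each such $\eta$, to fix a bijection $b_\eta: \kappa \to \eta$, code the previously constructed data $(Z_{\eta_\nu,\zeta})_{\nu,\zeta}$ through $b_\eta$ as a function in $\prod_\nu 2^{f(\nu)}$, and then use $G_\eta$ to select, at each $\zeta$, a $g(\zeta)$-sized approximation $Z_{\eta,\zeta} \subseteq \eta$ drawn from $\bigcup_{\nu} Z_{\eta_\nu,\zeta}$.

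The delicate point will be securing (5) and (6) simultaneously in this final case: (5) demands a uniform threshold $\beta(\eta,\eta') \in \kappa$ past which $Z_{\eta,\zeta} \cap \eta' = Z_{\eta',\zeta}$, while (6) forces the restriction to already match whenever $\eta$ itself appears inside some higher $Z_{\eta^*,\zeta}$, giving no slack at all. I expect to handle this by adjusting the cofinal sequence $(\eta_\nu)_{\nu<\kappa}$ diagonally against the behaviour of $G_\eta$ and by choosing $Z_{\eta,\zeta}$ to agree with $Z_{\eta_\nu,\zeta}$ for all $\eta_\nu \in Z_{\eta,\zeta}$, leveraging the dominating property of $G_\eta$ to absorb the prior coherence requirements on a tail of $\zeta$'s so that the finitely many exceptions can be patched by the inductive data.
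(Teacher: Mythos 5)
Your treatment of the successor and small-cofinality cases, and of the base $\eta\leq\kappa$, is essentially the paper's (the paper starts from $Z_{0,\zeta}=\emptyset$ and, in the $\cof(\xi)<\kappa$ case, sets $Z_{\xi,\zeta}=\emptyset$ below the uniformised coherence threshold and equal to the union above it). The genuine gap is in the case $\cof(\eta)=\kappa$, which is exactly the case you leave as a plan rather than a proof, and the plan as stated does not work. The dominating family $\{G_\xi\}_{\xi\in\kappa^+}$ only guarantees that for each $F\in\prod_\nu 2^{f(\nu)}$ \emph{some} member of the family eventually covers $F$; it gives you no control over \emph{which} index does so, so you cannot ``allot exactly one witness per $\eta$'' and expect $G_\eta$ to capture the data coded at stage $\eta$. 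Moreover, condition (6) carries no threshold --- it must hold at every $\zeta$, not on a tail --- so there is no room to ``patch exceptions'' there; and in any such approach the set of bad $\zeta$ would in general be an initial segment of $\kappa$ (of size $<\kappa$), not finite.

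The paper's resolution of this case is much simpler and uses no part of $\dagger(\kappa,f,g)$ beyond the monotonicity clauses (condition (3) of $\dagger$, the dominating number, is only needed later, in the construction of the Boolean algebras). Fix a strictly increasing cofinal sequence $\{\gamma_\theta\}_{\theta\in\kappa}$ in $\xi$ and, using the regularity of $\kappa$ and the already-established condition (5), choose a strictly increasing sequence $\{\beta_\theta\}_{\theta\in\kappa}$ with $\beta_0=0$ such that for $\nu<\theta$ and $\zeta\geq\beta_\theta$ one has $Z_{\gamma_\theta,\zeta}\cap\gamma_\nu=Z_{\gamma_\nu,\zeta}$. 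Then define $Z_{\xi,\zeta}=Z_{\gamma_\theta,\zeta}$ for $\beta_\theta\leq\zeta<\beta_{\theta+1}$: no union is taken at any fixed $\zeta$, so (1), (2) and (6) are inherited verbatim from stage $\gamma_\theta$, while (3), (4) and (5) follow from the coherence built into the choice of the $\beta_\theta$ (as $\zeta$ increases, the index $\theta$ increases, so $\bigcup_\zeta Z_{\xi,\zeta}$ still exhausts $\xi$). You should replace your dominating-family mechanism with this ``staircase'' definition; the rest of your outline then goes through.
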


\begin{proof}  The sets $Z_{\eta, \zeta}$ are defined by induction on the first parameter $\eta \in \kappa^+$.  We let $Z_{0, \zeta} = \emptyset$ for all $\zeta \in \kappa$ (as required by condition (1)).  That (1) - (6) hold is easy to see.  Suppose that $Z_{\eta, \zeta}$ have been defined for all $\eta \in \xi$ and for all $\zeta \in \kappa$.  We'll treat three cases.

\noindent \textbf{Case One: $\xi = \xi^* + 1$.}  In this case we let $Z_{\xi, \zeta} = Z_{\xi^*, \zeta} \cup \{\xi^*\}$.  That (1) holds at this stage is quite clear, and (2) holds because $g(\zeta)$ is an infinite cardinal.  Condition (3) is clear, and (4) holds at $\xi$ because (4) holds at $\xi^*$.

Condition (5) is true by induction.  Let $\eta < \xi$ be given.  If $\eta < \xi^*$ then by induction we select $\beta \in \kappa$ such that $Z_{\xi^*, \zeta} \cap \eta = Z_{\eta, \zeta}$ for all $\zeta \in \kappa \setminus \beta$, and clearly $Z_{\xi, \zeta} \cap \eta = Z_{\eta, \zeta}$ for all $\zeta \in \kappa \setminus \beta$.  If $\eta = \xi^*$ then we can simply let $\beta = 0$.  Condition (6) holds by induction, looking at cases $\eta < \xi^*$ and $\eta = \xi$.

\noindent \textbf{Case Two: $\xi$ is a limit ordinal and $\cof(\xi) < \kappa$.}  Let $\{\gamma_{\nu}\}_{\nu < \cof(\xi)}$ be a strictly increasing sequence with $\sup_{\nu < \cof(\xi)}\gamma_{\nu} = \xi$.  As (5) holds below $\xi$ we select for each pair $\nu, \nu^*$ with $\nu < \nu^* < \cof(\xi)$ select $\beta_{\nu, \nu^*} \in \kappa$ such that $Z_{\gamma_{\nu^*}, \zeta} \cap \gamma_{\nu} = Z_{\gamma_{\nu}, \zeta}$ for all $\zeta \in \kappa \setminus \beta_{\nu, \nu^*}$.  As (4) holds below $\xi$, for each pair $\nu, \nu^*$ with $\nu < \nu^* < \cof(\xi)$ select $\epsilon_{\nu, \nu^*} \in \kappa$ such that $\gamma_{\nu} \in Z_{\gamma_{\nu^*}, \epsilon_{\nu, \nu^*}}$.  Let $\delta = \sup \{\beta_{\nu, \nu^*}\}_{\nu < \nu^* < \cof(\xi)} \cup\{\epsilon_{\nu, \nu^*}\}_{\nu < \nu^* < \cof(\xi)}$, and as $\cof(\xi) < \kappa$ we have $\delta \in \kappa$.  If $\alpha > \delta$ we have for all $\nu < \nu^* < \cof(\xi)$ that

$$
\begin{array}{ll}
\gamma_{\nu} & \in   Z_{\gamma_{\nu^* + 1}, \epsilon_{\nu, \nu^* + 1}} \cap \gamma_{\nu^*}\vspace*{2mm}\\

& \subseteq Z_{\gamma_{nu^* + 1}, \max(\epsilon_{\nu, \nu^* + 1}, \beta_{\nu^*, \nu^* + 1})} \cap \gamma_{\nu^*} \vspace*{2mm}\\
& \subseteq Z_{\gamma_{\nu^* + 1, \alpha}} \cap \gamma_{\nu^*} \vspace*{2mm}\\
& = Z_{\gamma_{\nu^*}}
\end{array}
$$

\noindent where the first two inclusions hold because condition (3) holds below $\xi$, and the equality on the last line holds by choice of $\beta_{\nu^*, \nu^* + 1}$.  Picking $\alpha \in \kappa \setminus (\max (\cof(\xi), \delta))$, we therefore know that for all $\zeta > \alpha$ and all $\nu < \nu^* < \cof(\xi)$ the equality $Z_{\gamma_{\nu^*}, \zeta} \cap \gamma_{\nu} = Z_{\gamma_{\nu}, \zeta}$ holds.  Let

\[
Z_{\xi, \zeta} = \left\{
\begin{array}{ll}
\emptyset
                                            & \text{if } \zeta \leq \alpha, \\
\bigcup_{\nu < \cof(\xi)}Z_{\gamma_{\nu}, \zeta}                                        & \text{if } \zeta  \geq  \alpha.
\end{array}
\right.
\]

\noindent That (1) holds is clear.  To see that condition (2) holds we point out that

\begin{center}
$|Z_{\xi, \zeta}| \leq \cof(\xi)g(\zeta) \leq |\alpha|g(\zeta) \leq |\zeta|g(\zeta) \leq g(\zeta)$
\end{center}

\noindent where the last inequality follows from condition (2) of $\dagger(\kappa, f, g)$.  For condition (3) we notice that given $\zeta < \zeta^* < \kappa$ we either have $\zeta \leq \alpha$, in which case $Z_{\xi, \zeta}  = \emptyset \subseteq Z_{\xi, \zeta^*}$, or $\alpha < \zeta$, and since $Z_{\gamma_{\nu}, \zeta} \subseteq Z_{\gamma_{\nu}, \zeta^*}$ holds for each $\nu < \cof(\xi)$ we get $Z_{\xi, \zeta} \subseteq Z_{\xi, \zeta^*}$.  Condition (4) is clear by induction.  To check condition (5) we let $\eta \in \xi$ be given and pick $\nu < \cof(\xi)$ such that $\eta < \gamma_{\nu}$.  As condition (5) holds at $\gamma_{\nu}$ we select $\beta \in \kappa \setminus \alpha$ such that $\zeta \in \kappa \setminus \beta$ implies $Z_{\gamma_{\nu}, \zeta} \cap \eta = Z_{\eta, \zeta}$.  Recall that by how $\alpha$ was selected we have $Z_{\gamma_{\nu^*}, \zeta} \cap \gamma_{\nu^{**}} = Z_{\gamma_{\nu^{**}}, \zeta}$ for all $\nu^{**} < \nu^* < \cof(\xi)$ and $\zeta \in \kappa \setminus \beta$.  Then for $\zeta \in \kappa \setminus \beta$ we have

$$
\begin{array}{ll}
Z_{\xi, \zeta} \cap \eta  & = (\bigcup_{\nu^* < \cof(\xi)}Z_{\gamma_{\nu^*}, \zeta}) \cap \eta \vspace*{2mm}\\

& = (\bigcup_{\nu^* < \cof(\xi)}Z_{\gamma_{\nu^*}, \zeta}) \cap \gamma_{\nu} \cap \eta\vspace*{2mm}\\
& = Z_{\gamma_{\nu}, \zeta} \cap \eta\vspace*{2mm}\\
& = Z_{\eta, \zeta}
\end{array}
$$

\noindent  For condition (6) we suppose that $\eta \in Z_{\xi, \zeta}$.  As $Z_{\xi, \eta} \neq \emptyset$ we have $\zeta > \alpha$, so $Z_{\xi, \zeta} = \bigcup_{\nu < \cof(\xi)}Z_{\gamma_{\nu}, \zeta}$.  Select $\nu < \cof(\xi)$ for which $\eta \in Z_{\gamma_{\nu}, \zeta}$.  Then $Z_{\gamma_{\nu}, \zeta} \cap \eta = Z_{\eta, \zeta}$ since condition (6) holds at $\gamma_{\nu}$.  As $\zeta \in \kappa \setminus \beta$ we know that $Z_{\gamma_{\nu^*}, \zeta} \cap \gamma_{\nu^{**}} = Z_{\gamma_{\nu^{**}}, \zeta}$ for all $\nu^{**} < \nu^* < \cof(\xi)$ and so

$$
\begin{array}{ll}
Z_{\xi, \zeta} \cap \eta  & = (\bigcup_{\nu^* < \cof(\xi)} Z_{\gamma_{\nu^*}, \zeta}) \cap \eta \vspace*{2mm}\\

& = (\bigcup_{\nu^* < \cof(\xi)}Z_{\gamma_{\nu^*}, \zeta}) \cap \gamma_{\nu} \cap \eta\vspace*{2mm}\\
& = Z_{\gamma_{\nu}, \zeta} \cap \eta\vspace*{2mm}\\
& = Z_{\eta, \zeta}
\end{array}
$$

\noindent and all conditions are satisfied.

\noindent \textbf{Case Three: $\cof(\xi) = \kappa$.}  We take $\{\gamma_{\nu}\}_{\nu\in \kappa}$ to be a strictly increasing sequence such that $\sup_{\nu \in \kappa} \gamma_{\nu} = \xi$.  As $\kappa$ is regular and condition (5) holds below $\xi$ we can take $\{\beta_{\theta}\}_{\theta \in \kappa}$ to be a strictly increasing sequence of ordinals in $\kappa$ such that $\beta_0 = 0$ and when $\nu < \theta$ we have for all $\zeta \geq \beta_{\theta}$ that $Z_{\gamma_{\theta}, \zeta} \cap \gamma_{\nu} = Z_{\gamma_{\nu}, \zeta}$.  Let $Z_{\xi, \zeta} = Z_{\gamma_{\theta}, \zeta}$ for $\beta_{\theta} \leq \zeta < \beta_{\theta + 1}$.  We check conditions (1) - (6).

To see why (1) and (2) hold, let $\zeta \in \kappa$ be given and take $\theta \in \kappa$ to the unique element such that $\beta_{\theta} \leq \zeta < \beta_{\theta + 1}$, and notice that $$Z_{\xi, \zeta} = Z_{\gamma_{\theta}, \zeta} \subseteq \gamma_{\theta} \subseteq \xi$$ and $$|Z_{\xi, \zeta}| = |Z_{\beta_{\theta}, \zeta}| \leq g(\zeta).$$  For (3) we let $\zeta <\zeta^* < \kappa$ and take $\beta_{\theta} \leq \zeta < \beta_{\theta + 1}$ and $\beta_{\theta^*} \leq \zeta^* < \beta_{\theta^* + 1}$.  If $\theta = \theta^*$ then $$Z_{\xi, \zeta} = Z_{\gamma_{\theta}, \zeta} \subseteq Z_{\gamma_{\theta^*}, \zeta^*} = Z_{\xi, \zeta^*}$$ since condition (3) holds below $\xi$, and if $\theta < \theta^*$ then $$Z_{\xi, \zeta} = Z_{\gamma_{\theta}, \zeta} \subseteq Z_{\gamma_{\theta}, \zeta^*} = Z_{\gamma_{\theta^*}, \zeta^*} \cap \gamma_{\theta} \subseteq Z_{\gamma_{\theta^*}, \zeta^*} = Z_{\xi, \zeta^*}.$$  For condition (4), we notice first that $\bigcup_{\zeta \in \kappa} Z_{\xi, \kappa} \subseteq \xi$ as condition (1) holds.  For the reverse inclusion we let $\alpha \in \xi$ be given.  Select $\nu \in \kappa$ such that $\alpha \in \gamma_{\nu}$, and as condition (4) holds at $\gamma_{\nu}$ we select $\zeta^* \in \kappa$ such that $\alpha \in Z_{\gamma_{\nu}, \zeta^*}$.  Select $\nu < \theta \in \kappa$ so that there exists $\zeta^{**} \in \kappa$ with $\zeta^{**} > \zeta^*$ and $\beta_{\theta} \leq \zeta^{**} < \beta_{\theta + 1}$.  Then

$$\alpha \in Z_{\gamma_{\nu}, \zeta^*} \subseteq Z_{\gamma_{\nu}, \zeta^{**}}= Z_{\gamma_{\theta}, \zeta^{**}} \cap \gamma_{\nu} \subseteq Z_{\gamma_{\theta}, \zeta^{**}} = Z_{\xi, \zeta^{**}}\subseteq \bigcup_{\zeta \in \kappa} Z_{\xi, \zeta}$$

For condition (5) we let $\eta \in \xi$.  Select $\theta \in \kappa$ large enough that $\eta \in \gamma_{\theta}$.  As condition (5) holds at $\gamma_{\theta}$ we select $\beta \in \kappa$ such that $\zeta \in \kappa \setminus \beta$ implies $Z_{\gamma_{\theta}, \zeta} \cap \eta = Z_{\eta, \zeta}$.  Fixing $\zeta \in \kappa \setminus \max(\beta, \beta_{\theta})$ we select $\theta^* \in \kappa$ with $\beta_{\theta^*} \leq \zeta < \beta_{\theta^* + 1}$.  Then $\theta \leq \theta^*$ and $Z_{\xi, \zeta} = Z_{\gamma_{\theta^*}, \zeta}$, so in particular

$$Z_{\eta, \zeta} = Z_{\gamma_{\theta}, \zeta} \cap \eta = Z_{\gamma_{\theta^*}, \zeta} \cap \gamma_{\theta} \cap \eta = Z_{\xi, \zeta} \cap \eta.$$

To see condition (6), if $\eta \in Z_{\xi, \zeta} = Z_{\gamma_{\theta}, \zeta}$ then $$Z_{\xi, \zeta} \cap \eta = Z_{\gamma_{\theta}, \zeta} \cap \eta = Z_{\eta, \zeta}$$ since condition (6) holds below $\xi$, and all conditions hold.

\end{proof}

\begin{definition}  If $\B$ is an atomic Boolean algebra we say that a function $\Phi: \B \rightarrow \B$ a $\jmath$-\emph{mapping} if

\begin{enumerate}[(a)]

\item $\Phi \circ \Phi = \Phi$;

\item $\Phi(\Atoms(\B)) \subseteq \Atoms(\B)$;

\item $\Phi \upharpoonright (\Atoms(\B) \setminus \Phi(\Atoms(\B)))$ is a bijection with $\Phi(\Atoms(\B))$; and

\item $\Phi(\bigcup_{A \in X}A) = \bigcup_{A \in X}\Phi(A)$  for any $X \subseteq \Atoms(\B)$

\end{enumerate}

\noindent where $\Atoms(\B)$ denotes the set of atoms of $\B$.

\end{definition}

Now we give the crucial lemma from which the constructions will follow, which is a modification of the ideas in \cite{ShSt}.

\begin{lemma}\label{sequences}  Assume $\kappa$ is inaccessible and $\dagger(\kappa, f, g)$ and let $\{I_{\nu}\}_{\nu \in \kappa}$ be a partition of $\kappa$ such that $|I_{\nu}|= f(\nu)$.  Let $p: \kappa \rightarrow \kappa$ be defined by $\alpha \in I_{p(\alpha)}$.  For each $\nu \in \kappa$ write $I_{\nu} = I_{0, \nu} \sqcup I_{1, \nu}$, where $|I_{0, \nu}| = |I_{1, \nu}| = |I_{\nu}|$ and let $\psi_{\nu}: I_{\nu} \rightarrow I_{1, \nu}$ be a function such that $\psi_{\nu} \upharpoonright I_{1, \nu}$ is identity and $\psi_{\nu} \upharpoonright I_{0, \nu}$ is a bijection with $I_{1, \nu}$.

Then there exist $\{\B_{\xi, \nu}\}_{\xi\in \kappa^+, \nu \in \kappa}$, and $\{\Phi_{\xi, \nu}\}_{\xi \in \kappa^+, \nu \in \kappa}$ such that

\begin{enumerate}

\item $\B_{\xi, \nu}$ is an atomic Boolean subalgebra of $\Po(I_{\nu})$ and $\Phi_{\xi, \nu}: \B_{\xi, \nu} \rightarrow \B_{\xi, \nu}$ is a $\jmath$-mapping for each $\xi \in \kappa^+$ and $\nu \in \kappa$;

\item if $\xi \in \eta \in \kappa^+$ then there is a $\beta \in \kappa$ such that $\B_{\xi, \nu} \subseteq \B_{\eta, \nu}$ and $\Phi_{\xi, \nu} \subseteq \Phi_{\eta, \nu}$ for all $\nu \in \kappa \setminus \beta$;

\item for $A \in \Po(\kappa)$ there are $\xi \in \kappa^+$ and $\beta \in \kappa$ with $A \cap I_{\nu} \in \B_{\xi, \nu}$ for all $\nu \in \kappa \setminus \beta$.

\item for any collection $\{F_{\alpha}\}_{\alpha \in \kappa}$ of sets  such that $|F_{\alpha}| < f(p(\alpha))$ and $F_{\alpha} \subseteq I_{0, p(\alpha)}$ there are $\xi \in \kappa^+$, $\beta \in \kappa$ and sequence $\{\alpha_{\nu}\}_{\nu \in \kappa \setminus \beta}$ for which $F_{\alpha_{\nu}} \in \B_{\xi, \nu}$ and $\alpha_{\nu} \in I_{1, \nu} \setminus \Phi_{\xi, \nu}(F_{\alpha_{\nu}})$;

\end{enumerate}

\end{lemma}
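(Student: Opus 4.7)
The plan is to build $\B_{\xi,\nu}$ and $\Phi_{\xi,\nu}$ by induction on $\xi\in\kappa^+$, using the almost-coherent system $\{Z_{\xi,\zeta}\}$ from Lemma \ref{theZ} as scaffolding. First I would apply clause (3) of $\dagger(\kappa,f,g)$ to obtain a family $\{H_\eta\}_{\eta\in\kappa^+}$ with slices $H_\eta(\nu)$ of size at most $g(\nu)$ that eventually captures every element of $\prod_\nu\Po(I_\nu)$. Both the data in clause (3) (a set $A\subseteq\kappa$ coded by $\nu\mapsto A\cap I_\nu$) and in clause (4) (a sequence $\{F_\alpha\}_{\alpha\in\kappa}$ coded $\nu$-wise by the function $\alpha\mapsto F_\alpha$ on $I_{1,\nu}$, whose code lies in $|I_{1,\nu}|\cdot|\Po(I_{0,\nu})|\leq 2^{f(\nu)}$) can be packaged as a single element of $\prod_\nu 2^{f(\nu)}$, so one invocation of $\mathfrak{d}_{2^{f},g}=\kappa^+$ is enough.

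At stage $\xi$ let $\B_{\xi,\nu}$ be the Boolean subalgebra of $\Po(I_\nu)$ generated by $\{I_{0,\nu},I_{1,\nu}\}$ together with every set appearing in $\bigcup\{H_\eta(\nu):\eta\in Z_{\xi,\nu}\}$, closed under the partial maps $X\mapsto\psi_\nu[X\cap I_{0,\nu}]$ and $Y\mapsto(\psi_\nu\upharpoonright I_{0,\nu})^{-1}(Y\cap I_{1,\nu})$. By Lemma \ref{theZ}(2) one has $|Z_{\xi,\nu}|\leq g(\nu)$, so $\B_{\xi,\nu}$ is generated by at most $g(\nu)$ subsets and has at most $2^{g(\nu)}<f(\nu)$ atoms by clause (1) of $\dagger$. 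Because $I_{0,\nu},I_{1,\nu}$ are generators, each atom lies wholly in one of them, and the $\psi_\nu$-closure gives a natural bijection between atoms in $I_{0,\nu}$ and atoms in $I_{1,\nu}$. Define $\Phi_{\xi,\nu}$ on atoms by fixing each atom in $I_{1,\nu}$ and sending each atom $A\subseteq I_{0,\nu}$ to a target atom in $I_{1,\nu}$ via a bijection whose choice is left open, then extend by clause (d) of the $\jmath$-mapping definition; clause (1) follows.

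Clause (2) is Lemma \ref{theZ}(5) lifted to the algebras: for $\xi<\eta$ one has eventually $Z_{\eta,\nu}\cap\xi=Z_{\xi,\nu}$, whence eventually $\B_{\xi,\nu}\subseteq\B_{\eta,\nu}$; requiring that the bijection at stage $\eta$ refine that at stage $\xi$ atom by atom gives $\Phi_{\xi,\nu}\subseteq\Phi_{\eta,\nu}$. Clause (3) follows from the capture property of $\{H_\eta\}$ together with Lemma \ref{theZ}(4): given $A\subseteq\kappa$, pick $\eta$ capturing it and any $\xi>\eta$; by Lemma \ref{theZ}(4) eventually $\eta\in Z_{\xi,\nu}$, placing $A\cap I_\nu$ among the generators of $\B_{\xi,\nu}$.

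Clause (4) is the main obstacle. Given $\{F_\alpha\}$, the coding yields $\eta$ such that $H_\eta(\nu)$ eventually lists a pair $(\alpha_\nu,F_{\alpha_\nu})$ with $\alpha_\nu\in I_{1,\nu}$; then for $\xi>\eta$ with $\eta\in Z_{\xi,\nu}$ we have $F_{\alpha_\nu}\in\B_{\xi,\nu}$. Forcing $\alpha_\nu\notin\Phi_{\xi,\nu}(F_{\alpha_\nu})$ is not free, since pathological $F_\alpha$'s (e.g.\ $F_\alpha=\{(\psi_\nu\upharpoonright I_{0,\nu})^{-1}(\alpha)\}$) make $\alpha\in\psi_\nu[F_\alpha]$; instead one uses the remaining freedom in defining the bijection on atoms newly introduced at stage $\xi$ to perturb it off the forbidden diagonal for the pairs encoded by $\eta\in Z_{\xi,\nu}$. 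The slack $g(\nu)<f(\nu)=|I_{1,\nu}|$ guarantees that only $g(\nu)$ target atoms are forbidden while $f(\nu)$ are available, so a safe choice always exists, and the coherence demanded by clause (2) is preserved because the perturbation affects only newly appearing atoms. Carrying this diagonalization along the $\kappa^+$-induction while respecting the $Z$-scaffolding's coherence (clauses (5)--(6) of Lemma \ref{theZ}) is where the most delicate book-keeping lies.
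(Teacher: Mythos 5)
Your overall architecture matches the paper's: an induction along $\kappa^+$ organized by the $Z_{\xi,\zeta}$ system of Lemma \ref{theZ}, a single dominating family from $\mathfrak{d}_{2^f,g}=\kappa^+$ capturing both the sets $A\cap I_\nu$ and the codes of the functions $\alpha\mapsto F_\alpha$, algebras with at most $2^{g(\nu)}<f(\nu)$ atoms split symmetrically by $\psi_\nu$, and a $\jmath$-mapping defaulting to $\psi_\nu$. Clauses (1)--(3) would go through essentially as you say, modulo one point you leave genuinely open: an ``abstract bijection of atoms whose choice is left open'' does not automatically survive refinement. If the atom $A$ maps to the atom $B$ at stage $\xi$ and both are split at stage $\eta$, the sub-atoms of $A$ must biject onto those of $B$ for $\Phi_{\xi,\nu}\subseteq\Phi_{\eta,\nu}$ to hold, and one must also amalgamate the bijections coming from all $\xi\in Z_{\eta,\nu}$ simultaneously. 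The paper solves this by making $\Phi_{\xi,\nu}$ pointwise-induced, $\Phi_{\xi,\nu}(A)=\phi_{\xi,\nu}(A\cap D_{\xi,\nu})\cup\psi_\nu(A\setminus D_{\xi,\nu})$ with $|D_{\xi,\nu}|<f(\nu)$ and the $D$'s and $\phi$'s nested along $Z_{\eta,\nu}$; refinement coherence is then automatic. Your sketch needs this (or an equivalent) device, not just a bijection of atoms.

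The real gap is clause (4), which you correctly identify as the main obstacle but do not actually resolve. Your counting is wrong on its face: there are at most $2^{g(\nu)}$ atoms, not $f(\nu)$, so ``only $g(\nu)$ target atoms are forbidden while $f(\nu)$ are available'' does not describe the combinatorics; and the obstruction is not a small set of forbidden target atoms. Since $\alpha_\nu$ determines $F_{\alpha_\nu}$, you cannot first fix $\alpha_\nu$, see which source atoms meet $F_{\alpha_\nu}$, and then re-route the bijection --- moving $\alpha_\nu$ changes $F_{\alpha_\nu}$, and a single $F_\alpha$ of size $<f(\nu)$ can meet \emph{every} atom of $I_{0,\nu}$ except the large ones (e.g.\ a transversal), so ``perturbing onto an unforbidden atom'' has no safe targets among the atoms your generators produce. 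The paper's mechanism, which is absent from your sketch, is: maintain at each stage an atom $A_{\eta,\nu}\subseteq I_{0,\nu}$ of full size $f(\nu)$; choose $B_\nu^1\subseteq\psi_\nu(A_{\eta,\nu})$ of size $g(\nu)$ and then $B_\nu^0\subseteq A_{\eta,\nu}$ of size $g(\nu)$ disjoint from $\bigcup_{\alpha\in B_\nu^1}\bigcup_{\theta\in G_\eta(\nu)}L_{\theta,\nu}(\alpha)$ (possible since this union has size $<f(\nu)$ by regularity of $f(\nu)$ and $g(\nu)<f(\nu)$); \emph{add} $B_\nu^0,B_\nu^1$ (and their $\psi_\nu$-mirrors) as new generators so they become atoms; and set $\Phi(B_\nu^0)=B_\nu^1$. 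Then for \emph{every} $\alpha\in B_\nu^1$ and every captured code $\theta$ one has $L_{\theta,\nu}(\alpha)\subseteq I_{0,\nu}\setminus B_\nu^0$, so at any later stage where $F_{\alpha_\nu}$ becomes a union of atoms its image misses $B_\nu^1$, and any $\alpha_\nu\in B_\nu^1$ works. The uniformity over all $\alpha\in B_\nu^1$ is what defeats the diagonal; without reserving $B_\nu^0,B_\nu^1$ as generators and building the avoidance into their choice, the claim $\alpha_\nu\notin\Phi_{\xi,\nu}(F_{\alpha_\nu})$ cannot be extracted from the construction you describe.
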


\begin{proof}  Let $\{G_{\xi}\}_{\xi \in \kappa^+}$ witness $\mathfrak{d}_{2^{f}, g} = \kappa^+$.  For each $\nu \in \kappa$ let $\{E_{\theta, \nu}\}_{\theta \in 2^{f(\nu)}}$ enumerate $\Po(I_{\nu})$ and let $\{L_{\theta, \nu}\}_{\theta \in 2^{f(\nu)}}$ enumerate all functions from $I_{1, \nu}$ to $[I_{0, \nu}]^{< f(\nu)}$.

We inductively define the $\{\B_{\xi, \nu}\}_{\xi\in \kappa^+, \nu \in \kappa}$, and $\{\Phi_{\xi, \nu}\}_{\xi \in \kappa^+, \nu \in \kappa}$.  Let $\{Z_{\eta, \zeta}\}_{\eta \in \kappa^+, \zeta \in \kappa}$ be the collection given by Lemma \ref{theZ}.  For $\nu \in \kappa$ we let $\B_{\Psi, \nu} = \{\emptyset, I_{\nu}, I_{0, \nu}, I_{1, \nu}\}$ and $\Psi_{\nu}: \B_{\Psi, \nu} \rightarrow \B_{\Psi, \nu}$ be the $\jmath$-mapping induced by $\psi_{\nu}$.

For $\nu \in \kappa$ let $\B_{0, \nu} = \B_{\Psi, \nu}$ and $\phi_{0, \nu}$ be the empty mapping.  Suppose that $\B_{\xi, \nu}$ and $\phi_{\xi, \nu}$ have been defined for all $\zeta \in \eta$ and $\nu \in \kappa$ satisfying the following conditions:

\begin{enumerate}

\item for $\xi \in \eta$ and $\nu \in \kappa$

\begin{enumerate}[(a)]

\item $\phi_{\xi, \nu}: D_{\xi, \nu} \rightarrow D_{\xi, \nu}$ is such that 

\begin{enumerate}[(i)]

\item $\phi_{\xi, \nu} \upharpoonright (D_{\xi, \nu} \cap I_{0, \nu})$ is a bijection with $D_{\xi, \nu} \cap I_{1, \nu}$,

\item $\phi_{\xi, \nu} \upharpoonright (D_{\xi, \nu} \cap I_{1, \nu})$ is identity, and

\item for $x \in D_{\xi, \nu} \cap I_{0, \nu}$ we have $\phi_{\xi, \nu}\circ (\psi_{\nu}\upharpoonright I_{0, \nu})^{-1}\circ\phi_{\xi, \nu}(x) = \psi_{\nu}(x)$;

\end{enumerate}

\item $|D_{\xi, \nu}| < f(\nu)$;

\item $\B_{\xi, \nu}$ is an atomic Boolean subalgebra of $\Po(I_{\nu})$ such that

\begin{enumerate}[(i)]

\item $\B_{\psi, \nu} \subseteq \B_{\xi, \nu}$, and

\item for all $X \in \B_{\xi, \nu}$ we have $\psi_{\nu}(X), \psi_{\nu}^{-1}(X) \in \B_{\xi, \nu}$;

\end{enumerate}

\item $D_{\xi, \nu} \in \B_{\xi, \nu}$;

\item $|\Atoms(\B_{\xi, \nu})| \leq 2^{g(\nu)}$;

\end{enumerate}

\item if $\overline{\eta} \in \eta$ and $\xi \in Z_{\overline{\eta}, \nu}$ then

\begin{enumerate}[(a)]

\item $\B_{\xi, \nu} \subseteq \B_{\overline{\eta}, \nu}$;

\item $\phi_{\xi, \nu} \subseteq \phi_{\overline{\eta}, \nu}$;

\item $\Psi_{\nu} \subseteq \Phi_{\xi, \nu} \subseteq \Phi_{\overline{\eta}, \nu}$ where $\Phi_{\xi, \nu}: \B_{\xi, \nu} \rightarrow \B_{\xi, \nu}$ is a $\jmath$-mapping given by $\Phi_{\xi, \nu}(A) = \phi_{\xi, \nu}(A \cap D_{\xi, \nu}) \cup \psi_{\nu}(A \setminus D_{\xi, \nu})$.

\end{enumerate}

\end{enumerate}

By condition (6) of Lemma \ref{theZ} and induction hypothesis 2(a) we see that if $\xi \in \xi^*$ and $\xi, \xi^* \in Z_{\eta, \nu}$ then $\B_{\xi, \nu}\subseteq \B_{\xi^*, \nu}$.  Moreover, because $|Z_{\eta, \nu}| \leq g(\nu)$ it follows from induction hypothesis 1(e) that letting

\begin{center}

$\mathcal{J}_{\eta, \nu} = \{\bigcap_{\xi \in Z_{\eta, \nu}}j(\xi)\mid j \in \prod_{\xi \in Z_{\eta, nu}}\Atoms(\B_{\xi, \nu})\}$

\end{center}

\noindent we have $|\mathcal{J}_{\eta, \nu}| \leq (2^{g(\nu)})^{g(\nu)} = 2^{g(\nu)}$ for each $\nu$.  Furthermore, letting $\mathcal{K}_{\eta, \nu}$ be the partition of $I_{\nu}$ generated by $\{E_{\theta, \nu}\}_{\theta \in G_{\eta}(\nu)}$ we see also $|\mathcal{K}_{\eta, \nu}| \leq 2^{|G_{\eta}(\nu)|} = 2^{g(\nu)}$.

We point out that the union $\bigcup_{\xi \in Z_{\eta, \nu}}\phi_{\xi, \nu}$ is a function, for if $\xi, \xi^* \in Z_{\eta, \nu}$ and $\xi \in \xi^*$ then $Z_{\eta, \nu} \cap \xi^* = Z_{\xi^*, \nu}$ (by condition (6) of Lemma \ref{theZ}), and by induction hypothesis 2(b) we have $\phi_{\xi, \nu} \subseteq \phi_{\xi^*, \nu}$.  By induction hypothesis 1(a) we see that $\bigcup_{\xi \in Z_{\eta, \nu}}\phi_{\xi, \nu}$ has domain $\bigcup_{\xi \in Z_{\eta, \nu}} D_{\xi, \nu}$ and moreover

\begin{enumerate}[(i)]

\item $(\bigcup_{\xi \in Z_{\eta, \nu}}\phi_{\xi, \nu}) \upharpoonright (\bigcup_{\xi \in Z_{\eta, \nu}} D_{\xi, \nu} \cap I_{0, \nu})$ is a bijection with $\bigcup_{\xi \in Z_{\eta, \nu}} D_{\xi, \nu} \cap I_{1, \nu}$;

\item $(\bigcup_{\xi \in Z_{\eta, \nu}}\phi_{\xi, \nu}) \upharpoonright (\bigcup_{\xi \in Z_{\eta, \nu}} D_{\xi, \nu} \cap I_{1, \nu})$ is identity; and

\item for $x \in \bigcup_{\xi \in Z_{\eta, \nu}} D_{\xi, \nu} \cap I_{0, \nu}$ we have $(\bigcup_{\xi \in Z_{\eta, \nu}}\phi_{\xi, \nu}) \circ (\psi_{\nu} \upharpoonright I_{0, \nu})^{-1} \circ (\bigcup_{\xi \in Z_{\eta, \nu}}\phi_{\xi, \nu})(x) = \psi_{\nu}(x)$.

\end{enumerate}

\noindent We let

\begin{center}

$\underline{D_{\eta, \nu}} = \bigcup_{\xi \in Z_{\eta, \nu}} D_{\xi, \nu}$

\end{center}

\noindent and

\begin{center}

$\underline{\phi_{\eta, \nu}} = (\psi_{\nu}\upharpoonright I_{\nu} \setminus \underline{D_{\eta, \nu}}) \cup \bigcup_{\xi \in Z_{\eta, \nu}}\phi_{\xi, \nu}$

\end{center}

\noindent Also, since $f(\nu)$ is regular and $|D_{\xi, \nu}| < f(\nu)$ for all $\xi$ we have $|\underline{D_{\eta, \nu}}| < f(\nu)$.  Clearly for $x \in I_{0, \nu}$ we have $\underline{\phi_{\eta, \nu}} \circ (\psi_{\nu} \upharpoonright I_{0, \nu})^{-1}\circ \underline{\phi_{\eta, \nu}}(x) = \psi_{\nu}(x)$.  We let $\underline{\mathcal{A}}_{\eta, \nu}$ be the partition of $I_{\nu}$ generated by $\mathcal{J}_{\eta, \nu} \cup \mathcal{K}_{\eta, \nu} \cup \{I_{0, \nu}, I_{1, \nu}\}$, so $|\underline{\mathcal{A}}_{\eta, \nu}| \leq 2^{g(\nu)}$.  As $\underline{\mathcal{A}}_{\eta, \nu}$ is a partition of $I_{\nu}$, for each $x \in I_{0, \nu}$ there are unique $W_{0, x}, W_{1, x}, W_{2, x}, W_{3, x} \in \underline{\mathcal{A}}_{\eta, \nu}$ such that

\begin{itemize}

\item $x \in W_{0, x}$;

\item $\psi_{\nu}(x) \in W_{1, x}$;

\item $(\psi_{\nu}\upharpoonright I_{0, \nu})^{-1} \circ \underline{\phi_{\eta, \nu}}(x) \in W_{2, x}$; and

\item $\underline{\phi_{\eta, \nu}}(x) \in W_{3, x}$.

\end{itemize}

\noindent Thus for $x \in I_{0, \nu}$ we let $$A_x = W_{0, x} \cap \psi_{\nu}^{-1}(W_{1, x}) \cap \underline{\phi_{\eta, \nu}}^{-1}(\psi_{\nu}(W_{2, x})) \cap \underline{\phi_{\eta, \nu}}^{-1}(W_{3, x}) \cap I_{0, \nu}$$

\noindent and define $\mathcal{H}_0 \subseteq \Po(I_{0, \nu})$ by $\mathcal{H}_0 = \{A_x\}_{x \in I_{0, \nu}}$.  It is clear that $\mathcal{H}_0$ is a partition of $I_{0, \nu}$.  Let $\mathcal{H}_1 = \{\psi_{\nu}(A_x)\}_{x \in I_{0, \nu}}$ and $\mathcal{A}_{\eta, \nu} = \mathcal{H}_0 \cup \mathcal{H}_1$, and it is clear that $\mathcal{A}_{\eta, \nu}$ is a partition of $I_{\nu}$ and $|\mathcal{A}_{\eta, \nu}| \leq 2^{g(\nu)}$.

We note that for any $x \in I_{0, \nu}$ we have $\psi_{\nu}(A_x) \in \mathcal{H}_1$ (by definition of $\mathcal{H}_1$) but also $\underline{\phi_{\eta, \nu}}(A_x) \in \mathcal{H}_1$.  To see why this latter claim is true we let $x \in I_{0, \nu}$ be given and let $y = (\psi_{\nu} \upharpoonright I_{0, \nu})^{-1} \circ \underline{\phi_{\eta, \nu}}(x)$.  We have $(\psi_{\nu} \upharpoonright I_{0, \nu})^{-1} \circ \underline{\phi_{\eta, \nu}}(y) = x$.  Then $W_{0, x} = W_{2, y}$, $W_{1, x} = W_{3, y}$, $W_{2, x} = W_{0, y}$, $W_{3, x} = W_{1, y}$.  It is readily seen that

$$
\begin{array}{ll}
\underline{\phi_{\eta, \nu}}(A_x) & = \underline{\phi_{\eta, \nu}}(W_{0, x} \cap \psi_{\nu}^{-1}(W_{1, x}) \cap \underline{\phi_{\eta, \nu}}^{-1}(\psi_{\nu}(W_{2, x})) \cap \underline{\phi_{\eta, \nu}}^{-1}(W_{3, x}) \cap I_{0, \nu})\vspace*{2mm}\\
 & = \underline{\phi_{\eta, \nu}}(W_{0, x}) \cap \underline{\phi_{\eta, \nu}}\circ (\psi_{\nu}\upharpoonright I_{0, \nu})^{-1}(W_{1, x})\vspace*{2mm}\\
& \text{   }\cap \underline{\phi_{\eta, \nu}}\circ (\underline{\phi_{\eta, \nu}}\upharpoonright I_{0, \nu})^{-1}(\psi_{\nu}(W_{2, x})) \cap \underline{\phi_{\eta, \nu}} \circ (\underline{\phi_{\eta, \nu}}\upharpoonright I_{0, \nu})^{-1}(W_{3, x})\vspace*{2mm}\\
& = \underline{\phi_{\eta, \nu}}(W_{0, x}) \cap \underline{\phi_{\eta, \nu}}\circ (\psi_{\nu}\upharpoonright I_{0, \nu})^{-1}(W_{1, x}) \cap \psi_{\nu}(W_{2, x}) \cap W_{3, x}\vspace*{2mm}\\
& = \underline{\phi_{\eta, \nu}}(W_{2, y}) \cap \underline{\phi_{\eta, \nu}}\circ (\psi_{\nu}\upharpoonright I_{0, \nu})^{-1}(W_{3, y}) \cap \psi_{\nu}(W_{0, y}) \cap W_{1, y}\vspace*{2mm}\\
& = \psi_{\nu} \circ (\psi_{\nu}\upharpoonright I_{0, \nu})^{-1}(\underline{\phi_{\eta, \nu}}(W_{2, y}) \cap \underline{\phi_{\eta, \nu}}\circ (\psi_{\nu}\upharpoonright I_{0, \nu})^{-1}(W_{3, y})\vspace*{2mm}\\
& \text{   }\cap \psi_{\nu}(W_{0, y}) \cap W_{1, y})\vspace*{2mm}\\
& = \psi_{\nu}((\psi_{\nu}\upharpoonright I_{0, \nu})^{-1} \circ \underline{\phi_{\eta, \nu}}(W_{2, y}) \cap (\psi_{\nu}\upharpoonright I_{0, \nu})^{-1} \circ \underline{\phi_{\eta, \nu}}\circ (\psi_{\nu}\upharpoonright I_{0, \nu})^{-1}(W_{3, y})\vspace*{2mm}\\
& \text{   } \cap (\psi_{\nu}\upharpoonright I_{0, \nu})^{-1} \circ \psi_{\nu}(W_{0, y}) \cap (\psi_{\nu}\upharpoonright I_{0, \nu})^{-1}(W_{1, y}))\vspace*{2mm}\\
& = \psi_{\nu}((\psi_{\nu}\upharpoonright I_{0, \nu})^{-1} \circ \underline{\phi_{\eta, \nu}}(W_{2, y}) \cap (\underline{\phi_{\eta, \nu}} \upharpoonright I_{0, \nu})^{-1}(W_{3, y}) \cap W_{0, y}\vspace*{2mm}\\
& \text{   }\cap (\psi_{\nu} \upharpoonright I_{0, \nu})^{-1}(W_{1, y}))\vspace*{2mm}\\
& = \psi_{\nu}(\psi_{\nu}^{-1} \circ \underline{\phi_{\eta, \nu}}(W_{2, y}) \cap \underline{\phi_{\eta, \nu}}^{-1}(W_{3, y}) \cap W_{0, y} \cap \psi_{\nu}^{-1}(W_{1, y}) \cap I_{0, \nu})\vspace*{2mm}\\
& = \psi_{\nu}(\underline{\phi_{\eta, \nu}}^{-1}(\psi_{\nu}(W_{2, y})) \cap \underline{\phi_{\eta, \nu}}^{-1}(W_{3, y}) \cap W_{0, y} \cap \psi_{\nu}^{-1}(W_{1, y}) \cap I_{0, \nu})\vspace*{2mm}\\
& = \psi_{\nu}(A_y) \in \mathcal{H}_1

\end{array}
$$

\noindent It is similarly seen that if $A \subseteq I_{1, \nu}$ and $A \in \mathcal{A}_{\eta, \nu}$ then both $(\psi_{\nu}\upharpoonright I_{0, \nu})^{-1}(A) \in \mathcal{A}_{\eta, \nu}$ and $(\underline{\phi_{\eta, \nu}})^{-1}(A) \in \mathcal{A}_{\eta, \nu}$.  As $f(\nu)$ is regular and $|\mathcal{A}_{\eta, \nu}| \leq 2^{g(\nu)} < f(\nu)$ we have some $A_{\eta, \nu} \in \mathcal{A}_{\eta, \nu}$ with $|A_{\eta, \nu}| = f(\nu)$, and moreover we can take $A_{\eta, \nu} \subseteq I_{0, \nu}$ by replacing $A_{\eta, \nu}$ with $(\psi_{\nu}\upharpoonright I_{0, \nu})^{-1}(A_{\eta, \nu})$ if necessary.

We let $B_{\nu}^1 \in [\psi_{\nu}(A_{\eta, \nu})]^{|G_{\eta}(\nu)|}$.  Select $B_{\nu}^0 \in [A_{\eta, \nu}]^{|G_{\eta}(\nu)|}$ such that $(\bigcup_{x \in B_{\nu}^1}L_{\theta, \nu}(x)) \cap B_{\nu}^0 = \emptyset$ for all $\theta \in  G_{\eta}(\nu)$ and $\psi_{\nu}(B_{\nu}^0) \cap B_{\nu}^1 = \emptyset$.  Let $B_{\nu}^2 = (\psi_{\nu}\upharpoonright I_{0, \nu})^{-1}(B_{\nu}^1))$ and $B_{\nu}^3 = \psi_{\nu}(B_{\nu}^0)$.  Let $\phi: B_{\nu}^0 \cup B_{\nu}^1 \cup B_{\nu}^2 \cup B_{\nu}^3 \rightarrow B_{\nu}^1 \cup B_{\nu}^3$ be a function such that

\begin{itemize}

\item $\phi \upharpoonright (B_{\nu}^1 \cup B_{\nu}^3)$ is the identity map;

\item $\phi \upharpoonright B_{\nu}^0$ is a bijection with $B_{\nu}^1$;

\item $\phi \upharpoonright B_{\nu}^2$ is the bijection with $B_{\nu}^3$ such that for $x\in B_{\nu}^2$ we have $\phi\circ(\psi_{\nu}\upharpoonright X_{0, \nu})^{-1}\circ \phi(x) = \psi_{\nu}(x)$;

\end{itemize}

\noindent Now define

\begin{center}

$D_{\eta, \nu} = \bigcup_{\xi \in Z_{\eta, \nu}} D_{\xi, \nu} \cup B_{\nu}^0 \cup B_{\nu}^1 \cup B_{\nu}^2 \cup B_{\nu}^3$

\end{center}

\noindent We point out that the sets $B_{\nu}^0, B_{\nu}^1, B_{\nu}^2, B_{\nu}^3$ are pairwise disjoint by construction.  Because $D_{\eta, \nu} = \underline{D_{\eta, \nu}} \cup B_{\nu}^0 \cup B_{\nu}^1 \cup B_{\nu}^2 \cup B_{\nu}^3$ we have $|D_{\eta, \nu}| < f(\nu)$.

Note that $A_{\eta, \nu} \cup \psi_{\nu}(A_{\eta, \nu})$ is disjoint from $\bigcup_{\xi \in Z_{\eta, \nu}} D_{\xi, \nu}$.  To see this, given $\xi \in Z_{\eta, \nu}$ we have by induction hypothesis 1(d) that $D_{\xi, \nu} \in \B_{\xi, \nu}$, and we have already seen that if $\xi \in \xi^* \in \B_{\eta, \nu}$ we have $\B_{\xi, \nu} \subseteq \B_{\xi, \nu}$, so in particular $D_{\xi, \nu}$ is a union of elements in $\mathcal{J}_{\eta, \nu}$.  As $|A_{\eta, \nu}| = f(\nu) > |D_{\xi, \nu}|$ and $A_{\eta, \nu}$ is in the partition $\mathcal{A}_{\eta, \nu}$ we see that $A_{\eta, \nu}\cap D_{\xi, \nu} = \emptyset$.  But also $\psi_{\nu}(A_{\eta, \nu}) \in \mathcal{A}_{\eta, \nu}$ and as $|\psi_{\nu}(A_{\eta, \nu})| = f(\nu)$ we argue as before that $\psi_{\nu}(A_{\eta, \nu}) \cap D_{\xi, \nu} = \emptyset$.

Thus it is also the case that each of the sets $B_{\nu}^0, B_{\nu}^1, B_{\nu}^2, B_{\nu}^3$ is disjoint from $\bigcup_{\xi \in Z_{\eta, \nu}} D_{\xi, \nu}$.  Let 

\begin{center}

$\phi_{\eta, \nu} = \bigcup_{\xi \in Z_{\eta, \nu}}\phi_{\xi, \nu} \cup \phi$

\end{center}

By construction $\phi_{\eta, \nu}\upharpoonright (D_{\eta, \nu} \cap I_{0, \nu})$ is a bijection with $D_{\eta, \nu} \cap I_{1, \nu}$ and $\phi_{\eta, \nu} \upharpoonright (D_{\eta, \nu} \cap I_{1, \nu})$ is identity, and for $x \in D_{\eta, \nu} \cap I_{0, \nu}$ we have $\phi_{\eta, \nu} \circ (\psi_{\nu} \upharpoonright I_{0, \nu})^{-1}\circ \phi_{\eta, \nu}(x) = \psi_{\nu}(x)$.  We have also seen that $|D_{\eta, \nu}| < f(\nu)$.  Thus 1(a) and 1(b) hold.

Let $\B_{\eta, \nu}$ be the Boolean subalgebra of $\Po(I_{\nu})$ generated by $\mathcal{A}_{\eta, \nu} \cup \{B_{\nu}^0, B_{\nu}^1, B_{\nu}^2, B_{\nu}^3\}$.  It is easy to see by our construction that $\B_{\eta, \nu}$ is atomic with 

$$
\begin{array}{ll}
\Atoms(\B_{\eta, \nu}) & = (\mathcal{A}_{\eta, \nu} \setminus \{A_{\eta, \nu}, \psi_{\nu}(A_{\eta, \nu})\}) \vspace*{2mm}\\
 & \text{   } \cup\{B_{\nu}^0, B_{\nu}^2, A_{\eta, \nu} \setminus (B_{\nu}^0 \cup B_{\nu}^2), B_{\nu}^1, B_{\nu}^3, \psi_{\nu}(A_{\eta, \nu}) \setminus (B_{\nu}^1 \cup B_{\nu}^3)\}.
\end{array}
$$

\noindent For $A \in \Atoms(\B_{\eta, \nu})$ we know if $A \subseteq I_{0, \nu}$ then $\psi_{\nu}(A) \in \Atoms(\B_{\eta, \nu})$ and if $A \subseteq I_{1, \nu}$ then $(\psi_{\nu} \upharpoonright I_{0, \nu})^{-1}(A) \in \Atoms(\B_{\eta, \nu})$ and $\psi_{\nu}(A) = A$.  Also, $$\bigcup_{A \subseteq I_{0, \nu}, A \in \Atoms(\B_{\eta, \nu})} A = I_{0, \nu}$$ so $\B_{\psi, \nu} \subseteq \B_{\eta, \nu}$ and by what we have said regarding atoms we have $X \in \B_{\eta, \nu}$ implies $\psi_{\nu}(X), \psi_{\nu}^{-1}(X) \in \B_{\eta, \nu}$ so inductive hypothesis 1(c) holds.  It is also clear that if $A \in \Atoms(\B_{\eta, \nu})$ with $A \subseteq D_{\eta, \nu} \cap I_{0, \nu}$ then $\phi_{\eta, \nu}(A) \in \Atoms(\B_{\eta, \nu})$, and for $A \in \Atoms(\B_{\eta, \nu})$ with $A \subseteq D_{\eta, \nu} \cap I_{1, \nu}$ we have $(\phi_{\eta, \nu} \upharpoonright (D_{\eta, \nu} \cap I_{0, \nu}))^{-1}(A) \in \Atoms(\B_{\eta, \nu})$.

We note that for $\xi \in Z_{\eta, \nu}$ we have $\B_{\xi, \nu} \subseteq \B_{\eta, \nu}$, i.e. 2(a) holds as well, since by construction $\mathcal{J}_{\eta, \nu} \subseteq \B_{\eta, \nu}$.

For 1(d), we recall that if $\xi, \xi^* \in Z_{\eta, \nu}$ with $\xi \in \xi^*$ we have $\xi \in Z_{\eta, \nu} \cap \xi^* = Z_{\xi^*, \nu}$, and by induction hypotheses 1(c) and 2(a) we have $\B_{\xi, \nu} \subseteq \B_{\xi, \nu}$ and both are atomic.  Therefore when $\xi^* \in Z_{\eta, \nu}$ we have $D_{\xi^*, \nu} \in \B_{\xi^*, \nu}$ (by hypothesis 1(d)) and also $\bigcup_{\xi \in Z_{\xi^*, \nu}} D_{\xi, \nu} \in \B_{\xi^*, \nu}$, so $D_{\xi^*, \nu} \setminus (\bigcup_{\xi \in Z_{\xi^*, \nu}} D_{\xi, \nu}) \in B_{\xi^*, \nu}$.  Therefore

$$D_{\eta, \nu} = B_{\nu}^0 \cup B_{\nu}^1\cup B_{\nu}^2 \cup B_{\nu}^3 \cup (\bigcup_{\xi^* \in Z_{\eta, \nu}} (D_{\xi^*, \nu} \setminus \bigcup_{\xi \in Z_{\xi^*, \nu}} D_{\xi, \nu})) \in \B_{\eta, \nu}$$

For 1(e) we point out that $|\Atoms(\B_{\eta, \nu})| = |\mathcal{A}_{\eta, \nu}| + 4 \leq 2^{g(\nu)}$, and it is quite clear that $\phi_{\xi, \nu} \subseteq \phi_{\eta, \nu}$ for $\xi \in Z_{\eta, \nu}$, so that 2(b) holds as well.

We must check 2(c).  Let $\xi \in Z_{\eta, \nu}$ be given, along with $A \in \B_{\xi, \nu}$.  Then $A \cap A_{\eta, \nu} = A_{\eta, \nu}$ or $A \cap A_{\eta, \nu} = \emptyset$, and similarly $A \cap \psi_{\nu}(A_{\eta, \nu}) = \psi_{\nu}(A_{\eta, \nu})$.  Therefore $A \cap (B_{\nu}^0 \cup B_{\nu}^2) = (B_{\nu}^0 \cup B_{\nu}^2)$ or $A \cap (B_{\nu}^0 \cup B_{\nu}^2) = \emptyset$, and similarly $A \cap (B_{\nu}^1 \cup B_{\nu}^3) = B_{\nu}^1 \cup B_{\nu}^3$ or $A \cap (B_{\nu}^1 \cup B_{\nu}^3) = \emptyset$.  Therefore $\phi(A \cap (B_{\nu}^0 \cup B_{\nu}^1 \cup B_{\nu}^2\cup B_{\nu}^3)) = \psi_{\nu}(A \cap (B_{\nu}^0 \cup B_{\nu}^1 \cup B_{\nu}^2\cup B_{\nu}^3))$.

We also have for $\xi^*  \in Z_{\eta, \nu} \setminus \xi$ that $D_{\xi^*, \nu} \setminus D_{\xi, \nu} \in \B_{\xi^*, \nu}$ and applying induction hypothesis 2(c) we obtain $$\phi_{\xi^*, \nu}(A \cap (D_{\xi^*, \nu} \setminus D_{\xi, \nu})) = \psi_{\nu}(A \cap (D_{\xi^*, \nu} \setminus D_{\xi, \nu})).$$  Thus

$$
\begin{array}{ll}
\phi_{\eta, \nu}(A \cap D_{\eta, \nu}) & = \phi_{\xi, \nu}(A \cap D_{\xi, \nu}) \cup \bigcup_{\xi^* \in Z_{\eta, \nu} \setminus \xi} \phi_{\xi^*, \nu}(A \cap (D_{\xi^*, \nu} \setminus D_{\xi, \nu}))\vspace*{2mm}\\

& \text{   }\cup \phi(A \cap (B_{\nu}^0 \cup B_{\nu}^1 \cup B_{\nu}^2\cup B_{\nu}^3))\vspace*{2mm}\\
& = \phi_{\xi, \nu}(A \cap D_{\xi, \nu}) \cup \bigcup_{\xi^* \in Z_{\eta, \nu} \setminus \xi} \psi_{\nu}(A \cap (D_{\xi^*, \nu} \setminus D_{\xi, \nu}))   \vspace*{2mm}\\
& \text{   }\cup \psi_{\nu}(A \cap (B_{\nu}^0 \cup B_{\nu}^1 \cup B_{\nu}^2\cup B_{\nu}^3))\vspace*{2mm}\\

& = \phi_{\xi, \nu}(A \cap D_{\xi, \nu}) \cup \psi_{\nu}(A \cap (D_{\eta, \nu} \setminus D_{\xi, \nu}))

\end{array}
$$

\noindent and so

$$
\begin{array}{ll}
\Phi_{\eta, \nu}(A) & = \phi_{\eta, \nu}(A \cap D_{\eta, \nu}) \cup \psi_{\nu}(A \setminus D_{\eta, \nu})\vspace*{2mm}\\
& = \phi_{\xi, \nu}(A \cap D_{\xi, \nu}) \cup \psi_{\nu}(A \cap (D_{\eta, \nu} \setminus D_{\xi, \nu})) \cup \psi_{\nu}(A \setminus D_{\eta, \nu})\vspace*{2mm}\\
& = \phi_{\xi, \nu}(A \cap D_{\xi, \nu}) \cup \psi_{\nu}(A \setminus D_{\xi, \nu})\vspace*{2mm}\\
& = \Phi_{\xi, \nu}(A)
\end{array}
$$

\noindent and so $\Phi_{\xi, \nu} \subseteq \Phi_{\eta, \nu}$.  The fact that $\Phi_{\eta, \nu}$ is a $\jmath$-mapping is immediate from our observations on how $\phi_{\eta, \nu}$ and $\psi_{\nu}$ behave on $\Atoms(\B_{\eta, \nu})$ and so 2(c) is holds, and all induction hypotheses hold.

Now we check that properties (1) - (4) in the statement of the lemma hold.  Property (1) is immediate from the construction.  For property (2), we let $\xi \in \eta \in \kappa^+$ be given.  By condition (4) of Lemma \ref{theZ} we select $\beta \in \kappa$ so that $\xi \in Z_{\eta, \beta}$, and by condition (3) of Lemma \ref{theZ} we see that for $\nu \in \kappa \setminus \beta$ the membership $\xi \in Z_{\eta, \nu}$ holds.  By induction hypotheses (2) (a) and (2) (b) we get that $\B_{\xi, \nu} \subseteq \B_{\eta, \nu}$ and $\Phi_{\xi, \nu} \subseteq \Phi_{\eta, \nu}$ hold for $\nu \in \kappa \setminus \beta$.

For property (3) of our lemma we let $A \in \Po(\kappa)$ be given.  Let $A \cap I_{\nu} = E_{\theta_{\nu}, \nu}$ for each $\nu \in \kappa$.  By the selection of $G$ we pick $\xi \in \kappa^+$ and $\beta \in \kappa$ such that $\theta_{\nu} \in G_{\xi}(\nu)$ for all $\nu \in \kappa \setminus \beta$.  Then by construction we have $E_{\theta_{\nu}, \nu} \in \B_{\xi, \nu}$ for all $\nu \in \kappa \setminus \beta$.  The lemma is proved.

For property (4) of our lemma we let $\{F_{\alpha}\}_{\alpha \in \kappa}$ be a collection of sets such that $|F_{\alpha}| < f(p(\alpha))$ and $F_{\alpha} \subseteq I_{0, p(\alpha)}$.  Then for $\nu \in \kappa$ we have the function $L_{\nu}: I_{1, \nu} \rightarrow [I_{0, \nu}]^{< f(\nu)}$ given by $L_{\nu}(\alpha) = F_{\alpha}$.  Let $\{\theta_{\nu}\}_{\nu \in \kappa}$, where $\theta_{\nu} \in  2^{f(\nu)}$, be the sequence such that $L_{\theta_{\nu}, \nu} = L_{\nu}$.  Then by the selection of $G$ there is $\xi^* \in \kappa^+$ and $\beta^* \in \kappa$ such that $\theta_{\nu} \in G_{\xi^*}(\nu)$ for all $\nu \in \kappa \setminus \beta^*$.  Then by construction we see that for $\nu \in \kappa \setminus \beta^*$ we have nonempty pairwise disjoint atoms $B_{\nu}^0, B_{\nu}^1 \in \B_{\xi, \nu}$ with $B_{\nu}^0 \subseteq I_{0, \nu}$, $B_{\nu}^1 \subseteq I_{1, \nu}$, $\Phi_{\xi, \nu}(B_{\nu}^0) = B_{\nu}^1$, and $(\bigcup_{\alpha \in B_{\nu}^1} L_{\theta_{\nu}, \nu}(x)) \cap B_{\nu}^0 = \emptyset$.  Select $\alpha_{\nu} \in B_{\nu}^1$.  By statement (4) we select $\xi^{**}\in \kappa^+$ and $\beta^{**} \in \kappa \setminus \beta^{*}$ such that $F_{\alpha_{\nu}} \in \B_{\xi^{**}, \nu}$ for each $\nu \in \kappa \setminus \beta^{**}$.  Let $\xi = \max\{\xi^*, \xi^{**}\} + 1$ and by conditions (3) and (4) of Lemma \ref{theZ} select $\beta \in \kappa\setminus \beta^{**}$ such that $\xi^{*}, \xi^{**} \in Z_{\xi, \beta}$.  Then by induction hypothesis (2)(a) and (b) we have for $\nu \in \kappa \setminus \beta$ that $\B_{\xi^*, \nu}, \B_{\xi^{**}, \nu} \subseteq \B_{\xi, \nu}$ and $\Phi_{\xi^{*}, \nu} \subseteq \Phi_{\xi, \nu}$.  Then in particular, for $\nu \in \kappa \setminus \beta$ we have $\alpha_{\nu} \notin \Phi_{\xi, \nu}(F_{\alpha})$ since $F_{\alpha_{\nu}} \subseteq I_{0, \nu} \setminus B_{\nu}^0$, $\Phi_{\xi^*, \nu}$ is a $\jmath$-mapping, $B_{\nu}^0 \in \Atoms(\B_{\xi^*, \nu})$, and $\Phi_{\xi, \nu} \supseteq \Phi_{\xi^*, \nu}$.

\end{proof}

We remind the reader of some concepts introduced in the introduction and give some observations.

\begin{definition}  If $\kappa$ is a regular cardinal we say functions $\sigma_0, \sigma_1 \in {}^{\kappa}\chi$ are \emph{eventually equal} if $\{\alpha \in \kappa: \sigma_0(\alpha) \neq \sigma_1(\alpha)\}\equiv_{\kappa} \emptyset$.    Eventual equality determines an equivalence relation which we will write as $\sigma_0 \sim \sigma_1$ and let $^{\kappa}\chi/ \sim$ denote the set of equivalence classes.  If $|\chi| < \kappa$ then $\sigma_0 \sim \sigma_1$ if and only if for all $x \in \chi$ we have $\sigma_0^{-1}(\{x\}) \equiv_{\kappa} \sigma_1^{-1}(\{x\})$.  We let $p_{\sim}$ denote the function $\sigma \mapsto [\sigma]_{\sim}$.  When $\chi$ is the underlying set for a universal algebra $\mathfrak{U}$, then ${}^{\kappa}\chi$ is the underlying set for the algebra $\mathfrak{U}^{\kappa}$ and the function $p_{\sim}$ is a homomorphism from $\mathfrak{U}^{\kappa}$ to $\mathfrak{U}^{\kappa}/\sim$. 
\end{definition}

\begin{definition}\label{strongsupport}  Let $\mathfrak{U} = (\chi, \mathcal{S})$ and $\mathfrak{M} = (\Omega, \mathcal{S})$ universal algebras and $\Theta: \mathfrak{U}^Z \rightarrow \mathfrak{M}$ a homomorphism from the power $\mathfrak{U}^Z$.  A subset $Y \subseteq Z$ is a \emph{strong support} for $\Theta$ if $\sigma \upharpoonright Y = \tau \upharpoonright Y$ implies $\Theta \sigma = \Theta \tau$ and for any $y \in Y$ and $\sigma \in \mathfrak{U}^Z$ there exists some $x \in \chi$ such that $\tau \in \mathfrak{U}^Z$ given by 

\[
\tau(z) = \left\{
\begin{array}{ll}
\sigma(z)
                                            & \text{if } z \neq y , \\
x                                        & \text{if } z = y
\end{array}
\right.
\]

\noindent has $\Theta \sigma \neq \Theta \tau$.

\end{definition}

\begin{obs}  We point out that a strong support, if it exists, is unique provided $\chi \neq \emptyset$.  To see this, suppose that $Y_0$ and $Y_1$ are strong supports for $\Theta$.  If $y \in Y_0 \setminus Y_1$, then in particular $Z \neq \emptyset$ and as $\chi \neq \emptyset$ we can select $\sigma \in \mathfrak{U}^Z$.  As $Y_0$ is a strong support pick $x \in \chi$ such that 

\[
\tau(z) = \left\{
\begin{array}{ll}
\sigma(z)
                                            & \text{if } z \neq y , \\
x                                        & \text{if } z = y
\end{array}
\right.
\]

\noindent has $\Theta \sigma \neq \Theta \tau$, but as $Y_1$ is a strong support and $\tau \upharpoonright Y_0 = \sigma \upharpoonright Y_1$  we have $\Theta \sigma = \Theta \tau$, contradiction.

\end{obs}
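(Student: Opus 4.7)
The plan is to argue by contradiction using the symmetry of the two candidate strong supports. Suppose $Y_0$ and $Y_1$ are both strong supports for $\Theta: \mathfrak{U}^Z \to \mathfrak{M}$. If $Y_0 \neq Y_1$ then by symmetry I may pick some $y \in Y_0 \setminus Y_1$. I want to derive a contradiction by simultaneously invoking the two clauses in Definition \ref{strongsupport}: the second (``perturbation'') clause on $Y_0$ to produce $\tau$ with $\Theta\sigma \neq \Theta\tau$, and the first (``determination'') clause on $Y_1$ to conclude $\Theta\sigma = \Theta\tau$.

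To execute this, first note the trivial case $Z = \emptyset$, in which $\emptyset$ is the only subset of $Z$ and there is nothing to prove. So assume $Z \neq \emptyset$. Since $\chi \neq \emptyset$ the set $\mathfrak{U}^Z$ is nonempty, and I select any $\sigma \in \mathfrak{U}^Z$. Because $y \in Y_0$ and $Y_0$ is a strong support, the second clause provides $x \in \chi$ such that the function $\tau \in \mathfrak{U}^Z$ agreeing with $\sigma$ off $y$ and taking value $x$ at $y$ satisfies $\Theta\sigma \neq \Theta\tau$. On the other hand, since $y \notin Y_1$, the functions $\sigma$ and $\tau$ agree on all of $Y_1$, so the first clause applied to $Y_1$ forces $\Theta\sigma = \Theta\tau$. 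This contradiction establishes $Y_0 = Y_1$.

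There is no real obstacle here; the argument is an essentially immediate unpacking of the definition. The only conceptual point worth flagging is the role of the hypothesis $\chi \neq \emptyset$: without it, when $Z \neq \emptyset$ the power $\mathfrak{U}^Z$ is empty, so both clauses of Definition \ref{strongsupport} are vacuously satisfied by every subset $Y \subseteq Z$ and uniqueness fails.
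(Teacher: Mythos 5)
Your proof is correct and is essentially identical to the paper's: pick $y \in Y_0 \setminus Y_1$, use the perturbation clause for $Y_0$ to produce $\tau$ with $\Theta\sigma \neq \Theta\tau$, and observe that $\sigma \upharpoonright Y_1 = \tau \upharpoonright Y_1$ forces $\Theta\sigma = \Theta\tau$ by the determination clause for $Y_1$. Your remark on the role of $\chi \neq \emptyset$ is a reasonable (and accurate) addition, but the argument itself matches the paper's.
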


\begin{obs}\label{whenempty}  A homomorphic function $\Theta: \mathfrak{U}^Z \rightarrow \mathfrak{M}$ with nonempty domain and strong support $Y$ is constant if and only $Y = \emptyset$.  If $y \in Y$ then we select $\sigma$ in the domain of $\Delta$ and select $x \in \chi$ such that letting $\tau(y) = x$ and $\tau \upharpoonright (Z \setminus \{y\}) = \sigma \upharpoonright (Z \setminus \{y\})$ gives $\Theta \tau \neq \Theta \sigma$, so $\Delta$ is not constant.  On the other hand if $Y = \emptyset$ then for $\sigma$ and $\tau$ in the domain of $\Theta$ we get $\sigma\upharpoonright \emptyset = \tau \upharpoonright \emptyset$, so $\Theta\sigma = \Theta\tau$.

\end{obs}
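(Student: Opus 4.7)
The plan is to unwind the two clauses of Definition \ref{strongsupport} and handle the two implications separately; no heavy machinery is required.

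For the forward direction, assume $Y \neq \emptyset$ and fix some $y \in Y$. Since the domain $\mathfrak{U}^Z$ is nonempty, I can pick any $\sigma \in \mathfrak{U}^Z$. Applying the second clause in the definition of a strong support at this particular $y$ and $\sigma$, I obtain some $x \in \chi$ such that the function $\tau$ which agrees with $\sigma$ off $y$ and takes value $x$ at $y$ satisfies $\Theta \sigma \neq \Theta \tau$. Since $\Theta$ takes at least two distinct values, it is not constant.

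For the converse, suppose $Y = \emptyset$. Then for any two $\sigma, \tau \in \mathfrak{U}^Z$ the restrictions $\sigma \upharpoonright \emptyset$ and $\tau \upharpoonright \emptyset$ are both the empty function, so in particular $\sigma \upharpoonright Y = \tau \upharpoonright Y$. The first clause of the strong-support definition then forces $\Theta \sigma = \Theta \tau$, so $\Theta$ is constant on its domain.

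The only subtlety worth flagging, rather than a real obstacle, is that the nonemptiness of the domain is needed in the forward direction to exhibit a witness $\sigma$ before invoking the second clause of the definition; without this hypothesis the definition imposes no constraint and the statement would be vacuous in the other direction. Everything else is a direct rewrite of the definition, so the argument is essentially contained in the statement itself.
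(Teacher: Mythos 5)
Your proof is correct and matches the paper's argument exactly: the forward direction invokes the second clause of Definition \ref{strongsupport} at a point $y \in Y$ using any $\sigma$ from the nonempty domain, and the converse uses the first clause with the trivial equality of restrictions to $\emptyset$. Nothing further is needed.
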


\begin{lemma}\label{getoffeventually}  Suppose that $\kappa$ is an infinite regular cardinal, $\mathfrak{U} = (\chi, \mathcal{S})$ with $1 \leq |\chi| < \kappa$, and

$$\xymatrix{\mathfrak{U}^{\kappa} \ar[r]^{\Delta} \ar[d]^{p_{\sim}}&
\mathfrak{U}^{\kappa} \ar[d]^{p_{\sim}} \\ \mathfrak{U}^{\kappa}/\sim
\ar[r]^{\mathcal{E}} & \mathfrak{U}^{\kappa}/\sim    }$$

\noindent is a commuting diagram.  If $J \subseteq \kappa$ is such that $|J| = \kappa$ and $Y_{\alpha} \subseteq \kappa$ is a strong support for $\pi_{\alpha} \circ \Delta$ for each $\alpha \in J$, then $\bigcap_{\alpha \in J} Y_{\alpha} = \emptyset$.
\end{lemma}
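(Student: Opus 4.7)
The plan is to argue by contradiction. Suppose $y \in \bigcap_{\alpha \in J} Y_{\alpha}$. Since $|\chi| \geq 1$ we may fix any $\sigma \in \mathfrak{U}^{\kappa}$. For each $\alpha \in J$, because $y \in Y_{\alpha}$ is part of the strong support of $\pi_{\alpha} \circ \Delta$, Definition \ref{strongsupport} supplies some $x_{\alpha} \in \chi$ such that the function $\tau_{\alpha} \in \mathfrak{U}^{\kappa}$ obtained from $\sigma$ by replacing the value at coordinate $y$ with $x_{\alpha}$ satisfies $\pi_{\alpha}(\Delta \sigma) \neq \pi_{\alpha}(\Delta \tau_{\alpha})$.

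Now apply pigeonhole. Since $|\chi| < \kappa$, $|J| = \kappa$, and $\kappa$ is regular, there exists $x \in \chi$ such that $J' := \{\alpha \in J : x_{\alpha} = x\}$ has cardinality $\kappa$. Let $\tau$ be the modification of $\sigma$ at coordinate $y$ to the single value $x$; then $\tau = \tau_{\alpha}$ for every $\alpha \in J'$, so
\[
\pi_{\alpha}(\Delta \sigma) \neq \pi_{\alpha}(\Delta \tau) \quad \text{for every } \alpha \in J'.
\]
In particular $\{\alpha \in \kappa : (\Delta \sigma)(\alpha) \neq (\Delta \tau)(\alpha)\} \supseteq J'$, a set of cardinality $\kappa$, so $\Delta \sigma \not\sim \Delta \tau$.

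On the other hand, $\sigma$ and $\tau$ differ only at the single coordinate $y$, so trivially $\sigma \sim \tau$ and hence $p_{\sim}(\sigma) = p_{\sim}(\tau)$. Applying $\mathcal{E}$ and using the commutativity of the diagram gives $p_{\sim}(\Delta \sigma) = \mathcal{E}(p_{\sim}(\sigma)) = \mathcal{E}(p_{\sim}(\tau)) = p_{\sim}(\Delta \tau)$, i.e.\ $\Delta \sigma \sim \Delta \tau$. This contradicts the previous paragraph, so $\bigcap_{\alpha \in J} Y_{\alpha} = \emptyset$. The only real content is the pigeonhole step converting the pointwise choices $x_{\alpha}$ into a single simultaneous witness $\tau$; everything else is bookkeeping with the definitions. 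I do not anticipate any obstacles.
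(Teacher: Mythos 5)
Your proposal is correct and follows essentially the same argument as the paper: fix a common coordinate in the intersection, use the strong-support definition to get witnesses $x_{\alpha}$, apply regularity of $\kappa$ together with $|\chi|<\kappa$ to stabilize the witness on a set $J'$ of size $\kappa$, and contradict the commutativity of the diagram. The only cosmetic difference is that the paper takes $\sigma$ to be a constant function, which is unnecessary, as your argument shows.
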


\begin{proof}  Assume the hyotheses and suppose for contradiction that $\zeta \in \bigcap_{\alpha \in J} Y_{\alpha}$ .  Select $x_0 \in \chi$ and let $\sigma \in \mathfrak{U}^{\kappa}$ be constantly $x_0$.  For each $\alpha \in J$ there exists some $x_{\alpha} \in \chi$ such that if $\tau \upharpoonright (\kappa \setminus \{\zeta\}) = \sigma \upharpoonright (\kappa \setminus \{\zeta\})$ and $\tau(\zeta) = x_{\alpha}$ then $(\pi_{\alpha} \circ \Delta)\sigma \neq (\pi_{\alpha} \circ \Delta)\tau$.  As $\kappa$ is regular there exists some $J' \subseteq J$ and $x \in \chi$ with $|J'| = |J| = \kappa$ and for each $\alpha \in J'$ we have $(\pi_{\alpha} \circ \Delta)\sigma \neq (\pi_{\alpha} \circ \Delta)\tau$, where $\tau \upharpoonright (\kappa \setminus \{\zeta\}) = \sigma \upharpoonright (\kappa \setminus \{\zeta\})$ and $\tau(\zeta) = x$.  Then as $|J'| = \kappa$ we get 

\begin{center}

$\mathcal{E}[\sigma]_{\sim} = [\Delta\sigma]_{\sim} \neq [\Delta\tau]_{\sim} = \mathcal{E}[\tau]_{\sim}$

\end{center}

\noindent but of course $[\tau]_{\sim} = [\sigma]_{\sim}$, contradiction.

\end{proof}

\end{section}

\begin{section}{First construction}\label{Firstconstruction}

We are now prepared to define the first homomorphic projection.

\begin{construction}\label{thefunction}  Suppose that $\kappa$ is inaccessible and $\dagger(\kappa, f, g)$, and that $\mathfrak{U} = (\chi, \mathcal{S})$ is a universal algebra, with $2 \leq |\chi| < \kappa$.  Take $\{I_{\nu}\}_{\nu \in \kappa}$, $\{\B_{\xi, \nu}\}_{\xi\in \kappa^+, \nu \in \kappa}$, $\{\Phi_{\xi, \nu}\}_{\xi \in \kappa^+, \nu \in \kappa}$, $\{I_{0, \nu}\}_{\nu\in \kappa}$, and $\{I_{1, \nu}\}_{\nu \in \kappa}$ as in Lemma \ref{sequences}.

Given $\sigma \in \text{}^{\kappa}\chi$, for each $x \in \chi$ we select $\xi_x \in \kappa^+$ and $\beta_x \in \kappa$ such that $\sigma^{-1}(\{x\}) \cap I_{\nu} \in \B_{\xi_x, \nu}$ for all $\nu \in \kappa \setminus \beta_x$.  Let $\eta \in \kappa^+$ be greater than all elements in $\{\xi_x\}_{x\in \chi}$.  For each $x \in \chi$ select $\beta_x' \in \kappa$ such that $\B_{\xi_x, \nu} \subseteq \B_{\eta, \nu}$ and $\Phi_{{\xi_x, \nu}} \subseteq \Phi_{\eta, \nu}$ for all $\nu \in \kappa \setminus \beta_x'$.  Select $\beta \in \kappa$ which is greater than all elements in $\{\beta_x\}_{x \in \chi} \cup \{\beta_x'\}_{x \in \chi}$.  Then we have for all $\nu \in \kappa \setminus \beta$ and $x \in \chi$ that $\sigma^{-1}(\{x\}) \in \B_{\xi_x, \nu} \subseteq \B_{\eta, \nu}$ and $\Phi_{\xi_x, \nu} \subseteq \Phi_{\eta, \nu}$.  Let $\tau \in \text{}^{\kappa}\chi$ be such that $$\tau^{-1}(\{x\}) \cap I_{\nu} = (\sigma^{-1}(\{x\}) \cap I_{0, \nu}) \cup \Phi_{\eta, \nu}(\sigma^{-1}(\{x\}) \cap I_{0, \nu})$$ for all $\nu \in \kappa \setminus \beta$ (define $\tau \upharpoonright (\bigcup_{\nu \in \beta} I_{\nu})$ arbitrarily).

\end{construction}

\begin{lemma}\label{constructionwelldefined}  The assignment $\sigma \mapsto \tau$ in Construction \ref{thefunction} gives a well-defined $\mathcal{E}_0: \mathfrak{U}^{\kappa}/\sim \rightarrow \mathfrak{U}^{\kappa}/\sim$ which is a homomorphic projection.

\end{lemma}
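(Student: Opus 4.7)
The plan is to verify four items in turn: (i) for fixed $\sigma$, the output $\tau$ is well-defined up to $\sim$ independently of the choices $\{\xi_x\}$, $\eta$, and $\beta$; (ii) $\sigma\sim\sigma'$ forces $\tau\sim\tau'$; (iii) the induced map respects the basic operations of $\mathfrak{U}$; and (iv) applying the construction twice returns the same $\sim$-class.

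For (i), I would take two admissible choices $\eta < \eta'$ (with their attendant $\{\xi_x\}$ and $\beta$'s) and apply clause (2) of Lemma \ref{sequences} together with regularity of $\kappa$ and $|\chi|<\kappa$ to find a single $\beta^*\in\kappa$ such that $\Phi_{\eta,\nu}\subseteq\Phi_{\eta',\nu}$ and $\sigma^{-1}(\{x\})\cap I_{0,\nu}\in\B_{\eta,\nu}$ for every $x\in\chi$ and every $\nu\ge\beta^*$. Then $\Phi_{\eta',\nu}$ and $\Phi_{\eta,\nu}$ agree on each $\sigma^{-1}(\{x\})\cap I_{0,\nu}$, so the two defining formulas for $\tau$ and $\tau'$ coincide on $\bigcup_{\nu\ge\beta^*}I_{\nu}$, giving $\tau\sim\tau'$. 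For (ii), eventual equality of $\sigma$ and $\sigma'$ combined with $|\chi|<\kappa$ and the regularity of $\kappa$ yields $\gamma<\kappa$ with $\sigma\upharpoonright\bigcup_{\nu\ge\gamma}I_{\nu}=\sigma'\upharpoonright\bigcup_{\nu\ge\gamma}I_{\nu}$; choosing a common $\eta$ admissible for both inputs (legitimate by (i)), the defining formulas for $\tau$ and $\tau'$ agree on every $I_{\nu}$ with $\nu$ past a common threshold.

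For (iii), given an $n$-ary basic operation $s$ and inputs $\sigma_1,\ldots,\sigma_n$, I would use (i) to pick one $\eta$ admissible for all of $\sigma_1,\ldots,\sigma_n$ and for $s(\sigma_1,\ldots,\sigma_n)$ simultaneously. On $I_{0,\nu}$ each $\tau_{\sigma_i}$ agrees with $\sigma_i$, so $s(\tau_{\sigma_1},\ldots,\tau_{\sigma_n})$ and $\tau_{s(\sigma_1,\ldots,\sigma_n)}$ match there. On $I_{1,\nu}$ the argument is atom-by-atom: given $\alpha\in A\in\Atoms(\B_{\eta,\nu})$ with $A\subseteq I_{1,\nu}$, let $A'\subseteq I_{0,\nu}$ be the unique atom with $\Phi_{\eta,\nu}(A')=A$; since every $\sigma_i^{-1}(\{x\})\cap I_{0,\nu}\in\B_{\eta,\nu}$ is a union of atoms in $I_{0,\nu}$, each $\sigma_i$ is constantly some $x_i$ on $A'$, and clause (d) of the $\jmath$-mapping definition forces $\tau_{\sigma_i}$ to be constantly $x_i$ on $A$; the same reasoning applied to $s(\sigma_1,\ldots,\sigma_n)$ (which is constantly $s(x_1,\ldots,x_n)$ on $A'$) yields $\tau_{s(\sigma_1,\ldots,\sigma_n)}(\alpha)=s(x_1,\ldots,x_n)=s(\tau_{\sigma_1}(\alpha),\ldots,\tau_{\sigma_n}(\alpha))$.

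Finally, for (iv), the key observation is that $\tau^{-1}(\{x\})\cap I_{\nu}\in\B_{\eta,\nu}$ for large $\nu$: indeed $\sigma^{-1}(\{x\})\cap I_{0,\nu}\in\B_{\eta,\nu}$ and $\B_{\eta,\nu}$ is closed under $\Phi_{\eta,\nu}$, so the same $\eta$ (or any $\eta'>\eta$, using (i) to see the choice is immaterial) is admissible when applying Construction \ref{thefunction} to $\tau$. Because $\Phi_{\eta,\nu}(\sigma^{-1}(\{x\})\cap I_{0,\nu})\subseteq I_{1,\nu}$, one gets $\tau^{-1}(\{x\})\cap I_{0,\nu}=\sigma^{-1}(\{x\})\cap I_{0,\nu}$, and so the second pass reproduces $\tau^{-1}(\{x\})\cap I_{\nu}$ on all large $\nu$, giving $\mathcal{E}_0[\tau]_{\sim}=[\tau]_{\sim}$ and hence $\mathcal{E}_0\circ\mathcal{E}_0=\mathcal{E}_0$. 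The main obstacle is step (i): consolidating the per-$x$ choices $\xi_x$ into a single $\eta$ and showing the resulting $\tau$ is independent of these choices up to $\sim$, since every later step silently depends on being free to select one admissible $\eta$ that works for several inputs at once. Once that consolidation is in hand, the $\jmath$-mapping properties---especially the bijection between the atoms of $\B_{\eta,\nu}$ in $I_{0,\nu}$ and those in $I_{1,\nu}$ together with $\Phi_{\eta,\nu}\circ\Phi_{\eta,\nu}=\Phi_{\eta,\nu}$---drive the remaining verifications with minimal additional effort.
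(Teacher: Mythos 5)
Your proposal is correct and follows essentially the same route as the paper: consolidate the per-$x$ choices via clause (2) of Lemma \ref{sequences} and a bound $\beta$ below $\kappa$, check independence of the choices by comparing two admissible $\eta$'s through the eventual inclusions $\B_{\eta,\nu}\subseteq\B_{\eta',\nu}$ and $\Phi_{\eta,\nu}\subseteq\Phi_{\eta',\nu}$, get idempotence from $\tau\upharpoonright I_{0,\nu}=\sigma\upharpoonright I_{0,\nu}$, and get the homomorphism property from clause (d) of the $\jmath$-mapping definition. Your atom-by-atom verification of the homomorphism property is just a local repackaging of the paper's computation with the sets $N_x$, resting on the same facts (level sets are unions of atoms, and $\Phi_{\eta,\nu}$ commutes with unions of atoms and restricts to a bijection between atoms in $I_{0,\nu}$ and atoms in $I_{1,\nu}$).
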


\begin{proof}  To see that the element $\tau$ is well-defined up to $\sim$, suppose that instead we select $\overline{\xi_x} \in \kappa^+$ and $\overline{\beta_x} \in \kappa$ such that $f^{-1}(\{x\}) \cap I_{\nu} \in \B_{\overline{\xi_x}, \nu}$ for all $\nu \in \kappa \setminus \overline{\beta_x}$, select $\overline{\eta} \in \kappa^+$ greater than all elements in $\{\overline{\xi_x}\}_{x\in \chi}$, $\overline{\beta_x'}$ such that $\B_{\overline{\xi_x}, \nu} \subseteq \B_{\overline{\eta}, \nu}$  and $\Phi_{\overline{\xi_x}, \nu} \subseteq \Phi_{\overline{\eta}, \nu}$ for all $\nu \in \kappa \setminus \overline{\beta_x'}$, $\overline{\beta} \in \kappa$ greater than all elements in $\{\overline{\beta_x}\}_{x \in \chi} \cup \{\overline{\beta_x'}\}_{x \in \chi}$.  Define $\overline{\tau} \in \text{}^{\kappa}\chi$ by $$\overline{\tau}^{-1}(\{x\}) \cap I_{\nu} = (\sigma^{-1}(\{x\}) \cap I_{0, \nu}) \cup \Phi_{\overline{\eta}, \nu}(\sigma^{-1}(\{x\}) \cap I_{0, \nu})$$ for $\nu \in \kappa \setminus \beta$ and define $\overline{\tau} \upharpoonright (\bigcup_{\nu \in \overline{\beta}} I_{\nu})$ arbitrarily.

Now select $\underline{\eta} \in \kappa^+$ which is greater than both $\eta$ and $\overline{\eta}$.  Select $\underline{\beta} \in \kappa$ large enough that $\B_{\eta, \nu} \subseteq \B_{\underline{\eta} \nu}$, $\Phi_{\eta, \nu} \subseteq \Phi_{\underline{\eta}, \nu}$, $\B_{\overline{\eta}, \nu} \subseteq \B_{\underline{\eta} \nu}$, and $\Phi_{\overline{\eta}, \nu} \subseteq \Phi_{\underline{\eta}, \nu}$ for all $\nu \in \kappa \setminus \underline{\beta}$.  Then for $\nu \in \kappa \setminus \underline{\beta}$ we have 

$$
\begin{array}{ll}
\overline{\tau}^{-1}(\{x\}) \cap I_{\nu} & = (\sigma^{-1}(\{x\}) \cap I_{0, \nu}) \cup \Phi_{\overline{\eta}, \nu}(\sigma^{-1}(\{x\}) \cap I_{0, \nu})\vspace*{2mm}\\
& = (\sigma^{-1}(\{x\}) \cap I_{0, \nu}) \cup \Phi_{\eta, \nu}(\sigma^{-1}(\{x\}) \cap I_{0, \nu})\vspace*{2mm}\\
& = \tau^{-1}(\{x\})
\end{array}
$$

\noindent and so $\tau \sim \overline{\tau}$.

Notice also that if $\sigma \sim \sigma_1$ then we select $\overline{\beta} \in \kappa \setminus \beta$ such that $\nu \in \kappa \setminus \overline{\beta}$ implies $\sigma^{-1}(\{x\}) \cap I_{\nu} = \sigma_1^{-1}(\{x\}) \cap I_{\nu}$.  Then $\sigma_1^{-1}(\{x\}) \cap I_{\nu} \in \B_{\xi_x, \nu}$ for all $\nu \in \kappa \setminus \overline{\beta}$ and $x \in \chi$, and for all $\nu \in \kappa \setminus \alpha$ and $x \in \chi$ we have $\sigma_1^{-1}(\{x\}) \in \B_{\xi_x, \nu} \subseteq \B_{\eta, \nu}$ and $\Phi_{\xi_x, \nu} \subseteq \Phi_{\eta, \nu}$.  Then we define $\tau_1 \in \text{}^{\kappa}\chi$ to be such that $$\tau_1^{-1}(\{x\})\cap I_{\nu} = (\sigma_1^{-1}(\{x\}) \cap I_{0, \nu}) \cup \Phi_{\eta, \nu}(\sigma_1^{-1}(\{x\}) \cap I_{0, \nu})$$ for all $\nu \in \kappa \setminus \overline{\beta}$ and let $\tau_1 \upharpoonright (\bigcup_{\nu \in \overline{\beta}} I_{\nu})$ be arbitrary.  Then it is clear that $\tau_1 \upharpoonright (\bigcup_{\nu \in \kappa \setminus \overline{\beta}} I_{\nu}) = \tau \upharpoonright (\bigcup_{\nu \in \kappa \setminus \overline{\beta}} I_{\nu})$, so $\tau \sim \tau_1$.

We let $\mathcal{E}_0[\sigma]_{\sim} = [\tau]_{\sim}$.  We have seen that $\mathcal{E}_0$ is well-defined.  To see that $\mathcal{E}_0 \circ \mathcal{E}_0 = \mathcal{E}_0$, we point out that for $\sigma$ and $\tau$ as above we have $\tau \upharpoonright \bigcup_{\nu \in \kappa \setminus \beta} I_{0, \nu} = \sigma \upharpoonright \bigcup_{\nu \in \kappa \setminus \beta} I_{0, \nu}$, and by how $\mathcal{E}_0$ was defined it is immediate that $\mathcal{E}_0[\tau]_{\sim} = [\tau]_{\sim}$.

 We now check that $\mathcal{E}_0$ is a homomorphism.  Let $s(w_0, \ldots, w_{n-1}) \in \mathcal{S}$ be given, together with $[\sigma_0]_{\sim}, \ldots, [\sigma_{n-1}]_{\sim}$.  For each $x \in \chi$ select $\xi_x \in \kappa^+$ and $\beta_x \in \kappa$ such that $\sigma_i{-1}(\{x\}) \cap I_{\nu} \in \B_{\xi_x, \nu}$ for all $\nu \in \kappa \setminus \beta_x$ and $0 \leq i < n$.  Take $\eta \in \kappa^+$ greater than $\sup\{\xi_x\}_{x \in \chi}$.  For each $x \in \chi$ select $\beta_x' \in \kappa$ such that $\B_{\xi_x, \nu} \subseteq \B_{\eta, \nu}$ for all $\nu \in \kappa \setminus \beta_x'$.  Select $\beta \in \kappa$ greater than $\sup\{\beta_{x}\}_{x\in \chi} \cup \{\beta_x'\}_{x \in \chi}$.

For each $0 \leq i < n$ we let $\tau_i$ be given by $$\{\alpha \in I_{\nu}: \tau_i(\alpha) = x\} = (\{\alpha' \in I_{0, \nu}: \sigma_i(\alpha') = x\}) \cup \Phi_{\eta, \nu}(\{\alpha' \in I_{0, \nu}: \sigma_i(\alpha') = x\})$$ for $x \in \chi$ and $\nu \in \kappa \setminus \beta$, and $\tau_i\upharpoonright \bigcup_{\nu \in \beta} I_{\nu}$ is defined arbitrarily.  We have $\mathcal{E}_0[\sigma_i]_{\sim} = [\tau_i]_{\sim}$ for all $i$.  Let $\sigma$ be given by $\{\alpha \in I_{\nu}: \sigma(\alpha) = x\} =  \{\alpha \in I_{\nu}: s(\sigma_0(\alpha), \ldots, \sigma_{n-1}(\alpha)) = x\}$ for $x \in \chi$ and $\nu \in \kappa \setminus \beta$ and $\sigma\upharpoonright\bigcup_{\nu \in \beta} I_{\nu}$ is arbitrary.  We have $[\sigma]_{\sim} = s([\sigma_0]_{\sim}, \ldots, [\sigma_{n-1}]_{\sim})$.

Let $\tau$ be given by $\{\alpha \in I_{\nu}: \tau(\alpha) = x\} = \{\alpha \in I_{\nu}: s(\tau_0(\alpha), \ldots, \tau_{n-1}(\alpha)) = x\}$ for $x \in \chi$ and $\nu \in \kappa \setminus \beta$ and $\tau \upharpoonright\bigcup_{\nu \in \beta} I_{\nu}$ arbitrary, so $s([\tau_0]_{\sim}, \ldots, [\tau_{n-1}]_{\sim}) = [\tau]_{\sim}$.  Let $\overline{\tau}$ be given by $$\{\alpha \in I_{\nu}: \overline{\tau}(\alpha) = x\} = (\{\alpha' \in I_{0, \nu}: \sigma(\alpha') = x\} \cup \Phi_{\eta, \nu}(\{\alpha' \in I_{0, \nu}: \sigma(\alpha') = x\})$$ for $x \in \chi$ and $\nu \in \kappa \setminus \beta$ and $\overline{\tau}\upharpoonright\bigcup_{\nu \in \beta} I_{\nu}$ arbitrary, so that $\mathcal{E}_0[\sigma]_{\sim} = [\overline{\tau}]_{\sim}$.

For $x \in \chi$ we let $N_x = \{(x_0, \ldots, x_{n-1}) \in \chi^n: s(x_0, \ldots, x_{n-1}) = x\}$ and notice that for a fixed $x \in \chi$ and $\nu \in \kappa \setminus \beta$ we have

$$
\begin{array}{ll}
\{\alpha \in I_{\nu}: \tau(\alpha) = x\} & = \{\alpha \in I_{\nu}: s(\tau_0(\alpha), \ldots, \tau_{n-1}(\alpha)) = x\}\vspace*{2mm}\\
& = \{\alpha \in I_{\nu}: (\tau_0(\alpha), \ldots, \tau_{n-1}(\alpha)) \in N_x\}\vspace*{2mm}\\
& = \bigcup_{(x_0, \ldots, x_{n-1}) \in N_x} \bigcap_{i = 0}^{n-1}((\{\alpha' \in I_{0, \nu}: \sigma_i(\alpha') = x_i\})\vspace*{2mm}\\
& \text{      } \cup \Phi_{\eta, \nu}(\{\alpha' \in I_{0, \nu}: \sigma_i(\alpha') = x_i\}))\vspace*{2mm}\\
& = \bigcup_{(x_0, \ldots, x_{n-1}) \in N_x} (\{\alpha' \in I_{0, \nu}: \bigwedge_{i = 0}^{n-1}\sigma_i(\alpha') = x_i\}) \vspace*{2mm}\\
& \text{     } \cup \Phi_{\eta, \nu}(\{\alpha' \in I_{0, \nu}: \bigwedge_{i = 0}^{n-1}\sigma_i(\alpha') = x_i\})\vspace*{2mm}\\
& = (\{\alpha' \in I_{0, \nu}: \sigma(\alpha') = x\}) \cup \Phi_{\eta, \nu}(\{\alpha' \in I_{0, \nu}: \sigma(\alpha') = x\})\vspace*{2mm}\\
& = \{\alpha \in I_{\nu}: \overline{\tau}(\alpha) = x\}

\end{array}
$$

\noindent and so $[\tau]_{\sim} = [\overline{\tau}]_{\sim}$ and $\mathcal{E}_0$ is a homomorphism.  
\end{proof}

\begin{lemma}\label{nonemptystrong}  Suppose that $\Delta: \mathfrak{U}^{\kappa} \rightarrow \mathfrak{U}^{\kappa}$ such that $p_{\sim} \circ \Delta = \mathcal{E}_0 \circ p_{\sim}$.  If $Y_{\alpha}$ is a strong support for $\pi_{\alpha} \circ \Delta$ for each $\alpha \in J \subseteq \kappa$ then $|\{\alpha \in J: Y_{\alpha} = \emptyset\}| < \kappa$.
\end{lemma}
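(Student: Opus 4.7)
The plan is to argue by contradiction. I would suppose that $J^\ast := \{\alpha \in J : Y_\alpha = \emptyset\}$ has cardinality $\kappa$ and derive a contradiction with the commutativity $p_\sim \circ \Delta = \mathcal{E}_0 \circ p_\sim$. By Observation \ref{whenempty}, for each $\alpha \in J^\ast$ the homomorphism $\pi_\alpha \circ \Delta : \mathfrak{U}^\kappa \to \mathfrak{U}$ is constant; call its value $c_\alpha \in \chi$.

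The crucial intermediate step is the claim that $\mathcal{E}_0$ fixes the $\sim$-class of every constant sequence. For $\sigma \equiv x_0$ one has $\sigma^{-1}(\{x_0\}) \cap I_\nu = I_\nu \in \B_{\Psi,\nu}$ and $\sigma^{-1}(\{x\}) \cap I_\nu = \emptyset$ for $x \neq x_0$, so in Construction \ref{thefunction} all the $\xi_x$ may be taken to be $0$ and any positive $\eta$ works. The produced $\tau$ then satisfies
$$\tau^{-1}(\{x_0\}) \cap I_\nu \;=\; I_{0,\nu} \cup \Phi_{\eta,\nu}(I_{0,\nu})$$
for all $\nu$ past some threshold. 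The key computation is $\Phi_{\eta,\nu}(I_{0,\nu}) = I_{1,\nu}$: because $\Phi_{\eta,\nu}$ extends $\Psi_\nu$ and is a $\jmath$-mapping, each atom of $\B_{\eta,\nu}$ contained in $I_{1,\nu}$ is fixed, while each atom in $I_{0,\nu}$ is sent into $I_{1,\nu}$, forcing $\Phi_{\eta,\nu}(\Atoms(\B_{\eta,\nu}))$ to equal $\Atoms(\B_{\eta,\nu}) \cap I_{1,\nu}$ and its union to be all of $I_{1,\nu}$. Hence $\tau$ is eventually constantly $x_0$ and $\mathcal{E}_0[\sigma]_\sim = [\sigma]_\sim$.

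With this in hand I would pick two distinct $x_0, x_1 \in \chi$ (possible since $|\chi| \geq 2$) and take $\sigma_i \in \mathfrak{U}^\kappa$ constantly $x_i$. Commutativity yields $[\Delta\sigma_i]_\sim = \mathcal{E}_0[\sigma_i]_\sim = [\sigma_i]_\sim$, so $\{\alpha \in \kappa : (\Delta\sigma_i)(\alpha) \neq x_i\}$ has cardinality $<\kappa$. Since $(\Delta\sigma_i)(\alpha) = c_\alpha$ on $J^\ast$, this gives $|\{\alpha \in J^\ast : c_\alpha \neq x_i\}| < \kappa$ for both $i = 0, 1$. Because no $c_\alpha$ equals both $x_0$ and $x_1$ simultaneously, every $\alpha \in J^\ast$ lies in at least one of these two sets; hence $J^\ast$ is a union of two sets of cardinality $<\kappa$, forcing $|J^\ast| < \kappa$ by regularity of $\kappa$, the desired contradiction.

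The only point requiring care is the identity $\Phi_{\eta,\nu}(I_{0,\nu}) = I_{1,\nu}$, i.e.\ that on the Boolean-algebra level the $\jmath$-mapping sends all of the ``left half'' of each block onto the ``right half''. Once that is in place, the rest of the argument is a one-line pigeonhole over two elements of $\chi$ and uses nothing beyond the commutativity of the diagram and the regularity of $\kappa$.
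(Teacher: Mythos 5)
Your proof is correct, and it takes a mildly different route from the paper's. The paper also argues by contradiction via Observation \ref{whenempty}, but after thinning $J$ so that $p\upharpoonright J$ is injective it builds a single tailored test sequence $\sigma$ that is constantly $x_{\alpha}'$ on each block $I_{p(\alpha)}$, where $x_{\alpha}'$ is chosen different from the constant value $x_{\alpha}$ of $\pi_{\alpha}\circ\Delta$; then $\mathcal{E}_0[\sigma]_{\sim}$ retains the value $x_{\alpha}'$ on a tail of these blocks while $\Delta\sigma$ outputs $x_{\alpha}$ at coordinate $\alpha$ for all $\kappa$-many $\alpha\in J$. You instead use only the two constant sequences and a two-fold pigeonhole. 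Both arguments rest on the same underlying feature of Construction \ref{thefunction}, namely that on a tail of the blocks the output reproduces the $I_{0,\nu}$-part of the input across all of $I_{\nu}$; in your case this specializes to constants, where it reduces to the identity $\Phi_{\eta,\nu}(I_{0,\nu}) = I_{1,\nu}$. Your verification of that identity through atoms is sound (atoms of $\B_{\eta,\nu}$ refine $\{I_{0,\nu}, I_{1,\nu}\}$, and properties (c), (d) of a $\jmath$-mapping give the bijection and the union computation); it can be had even more directly from induction hypothesis 2(c) in the proof of Lemma \ref{sequences}, which says $\Phi_{\eta,\nu}$ extends the $\jmath$-mapping $\Psi_{\nu}$ induced by $\psi_{\nu}$, and $\Psi_{\nu}(I_{0,\nu}) = \psi_{\nu}(I_{0,\nu}) = I_{1,\nu}$ outright. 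Two small bonuses of your route: you never need to pass to a subset of $J$ on which $p$ is injective, and the closing step (a union of two sets of size $<\kappa$ cannot cover $J^{\ast}$) does not even require the regularity of $\kappa$.
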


\begin{proof}  Suppose on the contrary that $|\{\alpha \in J: Y_{\alpha} = \emptyset\}| = \kappa$ and without loss of generality we replace $J$ with $\{\alpha \in J: Y_{\alpha} = \emptyset\}$.  Without loss of generality we further replace $J$ with a subset such that $p\upharpoonright J$ is injective while maintaining $|J|=\kappa$.  By Observation \ref{whenempty} for each $\alpha \in J$ the function $\pi_{\alpha} \circ \Delta$ is constant, say $\pi_{\alpha} \circ \Delta$ is constantly $x_{\alpha}$.  Using the fact that $|\chi| \geq 2$ we select $x_{\alpha}' \in \chi \setminus \{x_{\alpha}\}$.  Take $\sigma \in \mathfrak{U}^{\kappa}$ to have $\sigma \upharpoonright I_{p(\alpha)}$ be constantly $x_{\alpha}'$ and $\sigma$ is defined arbitrarily elsewhere.  Then letting $\tau \in \mathfrak{U}^{\kappa}$ be such that $\mathcal{E}_0[\sigma]_{\sim} = [\tau]_{\sim}$, it is easily seen by definition of $\mathcal{E}_0$ that 

\begin{center}

$\{\alpha \in J: \tau \upharpoonright I_{p(\alpha)} \text{ is constantly }x_{\alpha}'\} \equiv_{\kappa} J$

\end{center}

\noindent but on the other hand $(\pi_{\alpha} \circ \Delta)\sigma = x_{\alpha}$ for every $\alpha \in J$, so that $\mathcal{E}_0[\sigma]_{\sim} = [\tau]_{\sim} \neq [\Delta\sigma]_{\sim}$, contradicting $p_{\sim} \circ \Delta = \mathcal{E}_0 \circ p_{\sim}$.

\end{proof}

\begin{lemma}\label{strongsupportbehavior}  Suppose that we have a homomorphism $\Delta: \mathfrak{U}^{\kappa} \rightarrow \mathfrak{U}^{\kappa}$ such that $p_{\sim} \circ \Delta = \mathcal{E}_0 \circ p_{\sim}$.  Suppose also that $J \subseteq \kappa$ with $|J| = \kappa$ and for each $\nu \in J$ and each $\alpha \in I_{\nu}$ the homomorphism $\pi_{\alpha} \circ \Delta$ has strong support $Y_{\alpha}$ with $|Y_{\alpha}| < \kappa$.  Then there exists $J' \subseteq J$ with $|J'| = \kappa$ and for all $\nu \in J'$ and $\alpha \in I_{\nu}$ we have $Y_{\alpha} \subseteq I_{0, \nu}$.
\end{lemma}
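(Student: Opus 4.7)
Argue by contradiction: assume $B = \{\nu \in J : \exists\, \alpha \in I_{\nu},\ Y_{\alpha} \not\subseteq I_{0,\nu}\}$ has $|J \cap B| = \kappa$, pick $\alpha_{\nu} \in I_{\nu}$ and $y_{\nu} \in Y_{\alpha_{\nu}} \setminus I_{0,\nu}$ for each $\nu \in J \cap B$, and aim to produce $\sigma^* \in \mathfrak{U}^{\kappa}$ for which $\pi_{\alpha_{\nu}}(\Delta \sigma^*) \neq \pi_{\alpha_{\nu}}(\Delta \sigma_0)$ at $\kappa$-many $\nu$ while $\Delta\sigma^* \sim \Delta \sigma_0$ for a base point $\sigma_0$. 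Such a $\sigma^*$ contradicts $|\{\alpha : (\Delta\sigma^*)(\alpha) \neq (\Delta\sigma_0)(\alpha)\}| < \kappa$ and forces $|J \cap B| < \kappa$, so $J' = J \setminus B$ works. A first pigeonhole disposes of the ``concentrated'' configurations: if some $y^*$ equals $y_{\nu}$ for $\kappa$-many $\nu$, then $y^* \in Y_{\alpha_{\nu}}$ for $\kappa$-many $\alpha_{\nu}$, and for constant $\sigma_0$ the strong-support property (Definition~\ref{strongsupport}) yields some $x_{\nu} \in \chi$ per $\nu$; since $|\chi| < \kappa$, a single $x^*$ serves $\kappa$-many $\nu$, so the one-point-modified $\sigma_0' \sim \sigma_0$ witnesses $\kappa$-many distinct flipped coordinates -- contradiction. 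The analogous $|I_{\mu}| < \kappa$ pigeonhole (using inaccessibility of $\kappa$) reduces the situation in which some $\mu^*$ equals $p(y_{\nu})$ for $\kappa$-many $\nu$ back to the concentrated case. After pruning we may thus assume the $y_{\nu}$ are pairwise distinct, and in the subcase $y_{\nu} \in I_{0,\mu_{\nu}}$ with $\mu_{\nu} \neq \nu$ also that the $\mu_{\nu}$ are pairwise distinct.

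Because $|Y_{\alpha_{\nu}}| < \kappa$ and $\kappa$ is regular, $\sup p(Y_{\alpha_{\nu}}) < \kappa$; and the distinct $y_{\nu}$ force the image of $p \circ y$ to be unbounded in $\kappa$ (every proper initial union $\bigcup_{\mu < \lambda} I_{\mu}$ has cardinality $< \kappa$ by inaccessibility). Recursively pick a thin subsequence $\{\nu_{\xi}\}_{\xi < \kappa}$ from $J \cap B$ with $p(y_{\nu_{\xi'}}) > \sup p(Y_{\alpha_{\nu_{\xi}}})$ for $\xi' > \xi$; this forces $y_{\nu_{\xi'}} \notin Y_{\alpha_{\nu_{\xi}}}$ at all future steps. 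Starting from a constant $\sigma_0$, define $\sigma_{\xi+1}$ to differ from $\sigma_{\xi}$ only at $y_{\nu_{\xi}}$, with the new value chosen via Definition~\ref{strongsupport} so that $\pi_{\alpha_{\nu_{\xi}}}(\Delta\sigma_{\xi+1}) \neq \pi_{\alpha_{\nu_{\xi}}}(\Delta\sigma_0)$ (if the stage-$\xi$ value already differs from $\pi_{\alpha_{\nu_{\xi}}}(\Delta\sigma_0)$, leave $y_{\nu_{\xi}}$ unchanged). Each point is modified at most once, so the pointwise limit $\sigma^*$ is well-defined; the thin-subsequence property yields $\sigma^* \upharpoonright Y_{\alpha_{\nu_{\xi}}} = \sigma_{\xi+1} \upharpoonright Y_{\alpha_{\nu_{\xi}}}$, so $\pi_{\alpha_{\nu_{\xi}}}(\Delta\sigma^*) \neq \pi_{\alpha_{\nu_{\xi}}}(\Delta\sigma_0)$ for every $\xi$. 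When the $y_{\nu}$ all lie in $\bigcup_{\mu} I_{1,\mu}$ the modifications sit entirely in $I_1$-parts, so $\sigma^*$ and $\sigma_0$ agree on every $I_{0,\mu}$; since Construction~\ref{thefunction} reads $\tau$ off $\sigma \upharpoonright \bigcup_{\mu} I_{0,\mu}$, one gets $\mathcal{E}_0[\sigma^*]_{\sim} = \mathcal{E}_0[\sigma_0]_{\sim}$, hence $\Delta\sigma^* \sim \Delta\sigma_0$ -- the desired contradiction.

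The remaining and hardest subcase has $y_{\nu} \in I_{0,\mu_{\nu}}$ with the $\mu_{\nu}$ pairwise distinct and $\mu_{\nu} \neq \nu$; modifications in $I_0$-parts do alter $\mathcal{E}_0$, so the direct argument above fails. The fix is to interleave into the inductive choice of $\{\nu_{\xi}\}$ the additional requirement that $\nu_{\xi} \notin \{\mu_{\nu_{\eta}} : \eta < \xi\}$ and $\mu_{\nu_{\xi}} \notin \{\nu_{\eta} : \eta < \xi\}$; since $\mu$ is injective on our set, the total forbidden set at stage $\xi$ has cardinality at most $3|\xi| < \kappa$, so the induction succeeds and the two sets $\{\nu_{\xi}\}$ and $\{\mu_{\nu_{\xi}}\}$ are disjoint. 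Now every modification in $\sigma^*$ lies in $\bigcup_{\xi} I_{0,\mu_{\nu_{\xi}}}$, disjoint from $\bigcup_{\xi} I_{0,\nu_{\xi}}$. Choose representatives $\tau_0,\tau^*$ of $\mathcal{E}_0[\sigma_0]_{\sim}$ and $\mathcal{E}_0[\sigma^*]_{\sim}$ from Construction~\ref{thefunction} using a common $\eta \in \kappa^+$ (which exists by Lemma~\ref{sequences}(3) applied to the set of modified coordinates); for $\nu_{\xi}$ beyond the resulting $\beta$, the formula expresses $\tau^* \upharpoonright I_{\nu_{\xi}}$ in terms of $\sigma^* \upharpoonright I_{0,\nu_{\xi}} = \sigma_0 \upharpoonright I_{0,\nu_{\xi}}$, so $\tau^* \upharpoonright I_{\nu_{\xi}} = \tau_0 \upharpoonright I_{\nu_{\xi}}$ and in particular $\tau^*(\alpha_{\nu_{\xi}}) = \tau_0(\alpha_{\nu_{\xi}})$. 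Combined with $\Delta\sigma_0 \sim \tau_0$ and $\Delta\sigma^* \sim \tau^*$, every $\alpha_{\nu_{\xi}}$ outside the two $<\kappa$-sized exceptional sets satisfies $\pi_{\alpha_{\nu_{\xi}}}(\Delta\sigma^*) = \tau^*(\alpha_{\nu_{\xi}}) = \tau_0(\alpha_{\nu_{\xi}}) = \pi_{\alpha_{\nu_{\xi}}}(\Delta\sigma_0)$, contradicting the construction at $\kappa$-many $\xi$. The main obstacle is this last subcase: it is precisely the diagonal disjointness $\{\nu_{\xi}\} \cap \{\mu_{\nu_{\xi}}\} = \emptyset$ that prevents the $I_0$-side modifications from reaching the very $I_{0,\nu_{\xi}}$'s that determine $\tau^*(\alpha_{\nu_{\xi}})$, keeping the $\mathcal{E}_0$-output intact on the witnesses.
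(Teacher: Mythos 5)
Your proof is correct, and it reaches the same underlying contradiction as the paper's --- modify a constant function at one point of each offending support lying outside $I_{0,\nu}$, observe that the $\mathcal{E}_0$-output on the blocks $I_{\nu}$ is unchanged because it is read off $\sigma \upharpoonright I_{0,\nu}$ alone, while the strong-support property forces $\pi_{\alpha_{\nu}} \circ \Delta$ to change at $\kappa$-many coordinates --- but the machinery you use to make the $\kappa$-many one-point modifications non-interfering is genuinely different. The paper first applies Lemma \ref{getoffeventually} to disjointify the family $\{I_{\nu} \cup \bigcup_{\alpha \in I_{\nu}} Y_{\alpha}\}_{\nu \in J''}$ in one transfinite step; after that, all the modification points automatically avoid each other's supports \emph{and} avoid $I_{0,\mu}$ for every relevant $\mu \in J''$, so a single simultaneous modification (with all witnesses $x_{t_{\alpha_{\nu}}}$ chosen against the constant $\sigma$) finishes the argument, and your ``hardest subcase'' never needs to be isolated. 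You instead reach non-interference via pigeonhole pruning (to distinct $y_{\nu}$ and distinct $\mu_{\nu}$, reusing the idea of Lemma \ref{getoffeventually} in your concentrated case), a thin subsequence on which future modification points escape past supports, and a sequential rather than simultaneous choice of the witnesses $x$ (your two-case rule at each stage correctly compensates for the fact that Definition \ref{strongsupport} only separates $\sigma_{\xi+1}$ from $\sigma_{\xi}$, not from $\sigma_0$); the explicit case split on whether $y_{\nu}$ lands in an $I_1$-part or a foreign $I_{0,\mu_{\nu}}$, with the diagonal-disjointness condition $\{\nu_{\xi}\} \cap \{\mu_{\nu_{\xi}}\} = \emptyset$, then substitutes for the paper's blanket disjointification. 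Your version is longer and the appeal to ``Lemma \ref{sequences}(3) applied to the set of modified coordinates'' for the common $\eta$ should really be phrased as choosing $\eta$ above the indices witnessing property (3) for all level sets of both $\sigma_0$ and $\sigma^*$ (as in Lemma \ref{constructionwelldefined}), but these are presentational matters, not gaps; what the paper's route buys is that one disjointification collapses all your cases into one.
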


\begin{proof}  Assume the hypotheses.  Since $\kappa$ is regular we know for each $\nu \in J$ that $|I_{\nu} \cup \bigcup_{\alpha \in I_{\nu}}Y_{\alpha}| < \kappa$.  Then by Lemma \ref{getoffeventually} it is straightforward to select by induction a $J'' \subseteq J$ such that the collection $\{I_{\nu} \cup \bigcup_{\alpha \in I_{\nu}}Y_{\alpha}\}_{\nu \in J''}$ is pairwise disjoint and $|J''| = \kappa$.  Let $J' = \{\nu \in J'': (\forall \alpha \in I_{\nu}) Y_{\alpha} \subseteq I_{0, \nu}\}$.

Suppose for contradiction that $|J'' \setminus J'| = \kappa$.  For each $\nu \in J'' \setminus J'$ we select $\alpha_{\nu} \in I_{\nu}$ such that $Y_{\alpha_{\nu}} \not\subseteq I_{0, \nu}$, so let $t_{\alpha_{\nu}} \in Y_{\alpha} \setminus I_{0, \nu}$.  Let $x_0 \in \chi$ and define $\sigma \in \mathfrak{U}^{\kappa}$ to be the function which is constantly $x_0$.  For each $t_{\alpha_{\nu}}$ select $x_{t_{\alpha_{\nu}}} \in \chi$ such that if $\tau$ agrees with $\sigma$, except $\tau(t_{\alpha_{\nu}}) = x_{t_{\alpha_{\nu}}}$, then $(\pi_{\alpha_{\nu}} \circ \Delta)\tau \neq (\pi_{\alpha_{\nu}} \circ \Delta)\sigma$.  Then more generally if $\tau \in \mathfrak{U}^{\kappa}$ is such that $\tau \upharpoonright Y_{\alpha} \setminus \{t_{\alpha_{\nu}}\}$ is constantly $x_0$ and $\tau(t_{\alpha_{\nu}}) = x_{t_{\alpha_{\nu}}}$ then $(\pi_{\alpha_{\nu}} \circ \Delta)\tau \neq (\pi_{\alpha_{\nu}} \circ \Delta)\sigma$.  Define $\tau_0 \in \mathfrak{U}^{\kappa}$ by 

\[
\tau_0(\gamma) = \left\{
\begin{array}{ll}
x_{t_{\alpha_{\nu}}}
                                            & \text{if } \gamma =  t_{\alpha_{\nu}} , \\
x_0                                        & \text{otherwise. }
\end{array}
\right.
\]

\noindent If we take $\tau_1 \in \mathfrak{U}^{\kappa}$ to be such that $[\tau_1]_{\sim} = \mathcal{E}_0[\tau_0]_{\sim}$ then by the construction of $\mathcal{E}_0$ it is the case that $$\{\nu \in J'' \setminus J': (\forall \alpha \in I_{\nu}) \tau_1(\alpha) = x_0\} \equiv_{\kappa} J'' \setminus J'$$ but on the other hand we know that $(\pi_{\alpha_{\nu}} \circ \Delta) \tau_0 \neq x_0$ for all $\nu \in J'' \setminus J'$, so that $[\Delta\tau_0]_{\sim} \neq [\tau_1]_{\sim} = \mathcal{E}_0[\tau_0]_{\sim}$ contradicting the assumption that $p_{\sim} \circ \Delta = \mathcal{E}_0 \circ p_{\sim}$.  Therefore $|J'' \setminus J'| < \kappa$, so $|J'| = \kappa$ and the lemma is proved.
\end{proof}

\begin{lemma}\label{Putittogether}  Suppose $\Delta: \mathfrak{U}^{\kappa} \rightarrow \mathfrak{U}^{\kappa}$ is a homomorphism such that $p_{\sim} \circ \Delta = \mathcal{E}_0 \circ p_{\sim}$.  For each $J \subseteq \kappa$ with $|J| = \kappa$ there exists $J' \subseteq J$ with $|J'| = \kappa$ and such that for every $\nu \in J'$ there exists $\alpha_{\nu} \in I_{\nu}$ such that the homomorphism $\pi_{\alpha_{\nu}} \circ \Delta$ does not have a strong support $Y_{\alpha_{\nu}}$ with $|Y_{\alpha_{\nu}}| < f(\nu)$.
\end{lemma}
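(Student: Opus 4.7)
The plan is to argue by contradiction. Suppose the conclusion fails for some $J \subseteq \kappa$ with $|J| = \kappa$. Since $\kappa$ is regular, the set $\{\nu \in J : \forall \alpha \in I_\nu,\ \pi_\alpha \circ \Delta \text{ has a strong support } Y_\alpha \text{ with } |Y_\alpha| < f(\nu)\}$ must have cardinality $\kappa$ (its complement in $J$ could otherwise serve as the desired $J'$). Replace $J$ by this subset. Lemma \ref{strongsupportbehavior} then shrinks $J$ to a $\kappa$-sized subset on which $Y_\alpha \subseteq I_{0, \nu}$ for all $\nu \in J$ and $\alpha \in I_\nu$, and Lemma \ref{nonemptystrong}, applied to the family indexed by $\bigcup_{\nu \in J} I_\nu$, shows that fewer than $\kappa$ coordinates $\alpha$ have $Y_\alpha = \emptyset$, so after discarding the $<\kappa$ many blocks $I_\nu$ meeting this bad set we may further arrange $Y_\alpha \neq \emptyset$ for every $\alpha \in I_\nu$, $\nu \in J$.

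To extract a test coordinate out of each block, I apply property (4) of Lemma \ref{sequences} to the family $F_\alpha := Y_\alpha$ for $\alpha \in \bigcup_{\nu \in J} I_\nu$ and $F_\alpha := \emptyset$ otherwise. This yields $\xi \in \kappa^+$, $\beta \in \kappa$, and indices $\alpha_\nu \in I_{1, \nu}$ (for $\nu \geq \beta$) satisfying $Y_{\alpha_\nu} = F_{\alpha_\nu} \in \B_{\xi, \nu}$ and $\alpha_\nu \notin \Phi_{\xi, \nu}(Y_{\alpha_\nu})$. Replacing $J$ by $J \cap (\kappa \setminus \beta)$, each $\alpha_\nu$ lies in $I_\nu$ with these properties.

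Now I construct $\sigma_1$ witnessing the contradiction. Fix $x_0 \in \chi$ and let $\sigma_0$ be constantly $x_0$; the defining formula for $\mathcal{E}_0$ immediately gives $\mathcal{E}_0[\sigma_0]_\sim = [\sigma_0]_\sim$, so by commutativity $(\pi_{\alpha_\nu} \circ \Delta)(\sigma_0) = x_0$ for all but $<\kappa$ many $\nu \in J$; discard these. For each remaining $\nu$, the strong-support property of $Y_{\alpha_\nu}$ furnishes $t_\nu \in Y_{\alpha_\nu}$ and $x^*_\nu \in \chi$ so that modifying $\sigma_0$ only at $t_\nu$ to value $x^*_\nu$ changes the output of $\pi_{\alpha_\nu} \circ \Delta$ from $x_0$. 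Since $|\chi| < \kappa$ and $\kappa$ is regular, pigeonhole gives a single $x^* \in \chi \setminus \{x_0\}$ and a $\kappa$-sized further restriction $J'$ with $x^*_\nu = x^*$ for all $\nu \in J'$. Define $\sigma_1$ to agree with $\sigma_0$ except $\sigma_1(t_\nu) = x^*$ for each $\nu \in J'$. Since $Y_{\alpha_\nu} \subseteq I_{0, \nu}$ and the $I_{0, \nu}$ are pairwise disjoint, the restriction $\sigma_1 \upharpoonright Y_{\alpha_\nu}$ coincides with the single-coordinate modification of $\sigma_0$, hence $(\pi_{\alpha_\nu} \circ \Delta)(\sigma_1) \neq x_0$ for every $\nu \in J'$.

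Finally I compute $\mathcal{E}_0[\sigma_1]_\sim = [\tau_1]_\sim$. Property (3) applied to $\{t_\nu : \nu \in J'\}$ supplies $\xi_3 \in \kappa^+$ with $\{t_\nu\} \in \B_{\xi_3, \nu}$ eventually; choose $\eta \in \kappa^+$ above both $\xi$ and $\xi_3$ and restrict $J'$ once more so that $\B_{\xi, \nu}, \B_{\xi_3, \nu} \subseteq \B_{\eta, \nu}$ and $\Phi_{\xi, \nu} \subseteq \Phi_{\eta, \nu}$. On this cofinal $J'$, the singleton $\{t_\nu\}$ is an atom of $\B_{\eta, \nu}$ contained in the union-of-atoms set $Y_{\alpha_\nu}$, so the $\jmath$-mapping property together with the extension relation gives
$$\Phi_{\eta, \nu}(\{t_\nu\}) \subseteq \Phi_{\eta, \nu}(Y_{\alpha_\nu}) = \Phi_{\xi, \nu}(Y_{\alpha_\nu}),$$
whence $\alpha_\nu \notin \Phi_{\eta, \nu}(\{t_\nu\})$. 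Plugging $\sigma_1^{-1}(\{x_0\}) \cap I_{0, \nu} = I_{0, \nu} \setminus \{t_\nu\}$ into the defining formula for $\mathcal{E}_0$ then yields $\tau_1(\alpha_\nu) = x_0$, so $\Delta \sigma_1$ and $\tau_1$ disagree on the $\kappa$-sized set $\{\alpha_\nu : \nu \in J'\}$, contradicting $[\Delta \sigma_1]_\sim = [\tau_1]_\sim$. The main obstacle I anticipate is bookkeeping: keeping the Boolean-algebra and $\jmath$-mapping inclusions aligned on $\kappa$-many $\nu$ simultaneously so that both $\{t_\nu\}$ and $Y_{\alpha_\nu}$ sit inside a common $\B_{\eta, \nu}$ and the inclusion $\Phi_{\eta, \nu}(\{t_\nu\}) \subseteq \Phi_{\xi, \nu}(Y_{\alpha_\nu})$ holds uniformly.
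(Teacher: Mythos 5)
Your proposal is correct and follows essentially the same route as the paper: reduce via Lemmas \ref{nonemptystrong} and \ref{strongsupportbehavior} to nonempty strong supports inside $I_{0,\nu}$, feed them into Lemma \ref{sequences}(4) to get the test coordinates $\alpha_\nu \in I_{1,\nu}\setminus\Phi_{\xi,\nu}(Y_{\alpha_\nu})$, and perturb the constant function at one point $t_\nu$ of each support to contradict $p_\sim\circ\Delta=\mathcal{E}_0\circ p_\sim$. The only (harmless) deviations are the pigeonhole step producing a single $x^*$, which the paper does not need, and your explicit justification that $(\pi_{\alpha_\nu}\circ\Delta)\sigma_0=x_0$ for all but $<\kappa$ many $\nu$, a point the paper leaves implicit.
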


\begin{proof}  Suppose that the conclusion fails.  Then there is a $J \subseteq \kappa$ with $|J| = \kappa$ and for every $\nu \in J$ and $\alpha \in I_{\nu}$ the homomorphism $\pi_{\alpha} \circ \Delta$ has a strong support $Y_{\alpha}$ with $|Y_{\alpha}| < f(\nu)$.  By removing fewer than $\kappa$ elements in $J$ we without loss of generality replace $J$ with a subset such that for every $\nu \in J$ and $\alpha \in I_{\nu}$ we have $Y_{\alpha} \neq \emptyset$, by Lemma \ref{nonemptystrong}.  As $f(\nu) < \kappa$ for all $\nu \in \kappa$ we have by Lemma \ref{strongsupportbehavior} a $J' \subseteq J$ such that $|J'| = \kappa$ and for all $\nu \in J'$ and $\alpha \in I_{\nu}$ the inclusion $Y_{\alpha} \subseteq I_{0, \nu}$ holds.

Now we define a collection $\{F_{\alpha}\}_{\alpha \in \kappa}$ by letting $F_{\alpha} = Y_{\alpha}$ for $\nu \in J'$ and $\alpha \in I_{\nu}$ and for $\nu \in \kappa \setminus J'$ and $\alpha \in I_{\nu}$ we let $F_{\alpha} = \emptyset$.  For this collection $\{F_{\alpha}\}_{\alpha \in \kappa}$ we have $|F_{\alpha}| < f(p(\alpha))$ and $F_{\alpha} \subseteq I_{0, p(\alpha)}$.  By Lemma \ref{sequences} there are $\xi \in \kappa^+$ and $\beta \in \kappa$ and sequence $\{\alpha_{\nu}\}_{\nu \in \kappa \setminus \beta}$ for which $F_{\alpha_{\nu}} \in \B_{\xi, \nu}$ and $\alpha_{\nu} \in I_{1, \nu} \setminus \Phi_{\xi, \nu}(F_{\alpha_{\nu}})$.  For each $\nu \in J'$ we select $t_{\alpha_{\nu}} \in F_{\alpha_{\nu}}$.  Let $x_0 \in \chi$ and $\sigma \in \mathfrak{U}^{\kappa}$ be the constant function with output $x_0$.  As $F_{\alpha_{\nu}}$ is a strong support for $\pi_{\alpha_{\nu}} \circ \Delta$ we pick $x_{t_{\alpha_{\nu}}} \in \chi$ such that for any $\tau \in \mathfrak{U}^{\kappa}$ such that $\tau \upharpoonright (F_{\alpha_{\nu}} \setminus \{t_{\alpha_{\nu}}\}) = \sigma \upharpoonright (F_{\alpha_{\nu}} \setminus \{t_{\alpha_{\nu}}\})$ and $\tau(t_{\alpha_{\nu}}) = x_{t_{\alpha_{\nu}}}$ we have $(\pi_{\alpha_{\nu}} \circ \Delta)\tau \neq (\pi_{\alpha_{\nu}} \circ \Delta)\sigma$.  Define $\tau_0 \in \mathfrak{U}^{\kappa}$ by

\[
\tau_0(\gamma) = \left\{
\begin{array}{ll}
x_{t_{\alpha_{\nu}}}
                                            & \text{if } \gamma =  t_{\alpha_{\nu}} , \\
x_0                                        & \text{otherwise. }
\end{array}
\right.
\]

Now by Lemma \ref{sequences} parts (2) and (3) select a $\xi^* \in \kappa^+ \setminus \xi$ and $\beta^* \in \kappa \setminus \beta$ such that for $\nu \in \kappa \setminus \beta^*$ we have

\begin{itemize}

\item $\{t_{\alpha_{\nu}}\} \in \B_{\xi^*, \nu}$; and

\item $\B_{\xi, \nu} \subseteq \B_{\xi^*, \nu}$ and $\Phi_{\xi, \nu} \subseteq \Phi_{\xi^*, \nu}$.

\end{itemize}

\noindent Let $\tau_1 \in \mathfrak{U}^{\kappa}$ be given by

\[
\tau_1(\gamma) = \left\{
\begin{array}{ll}
x_{t_{\alpha_{\nu}}}
                                            & \text{if } \gamma =  t_{\alpha_{\nu}}\text{ or } \Phi_{\xi^*, \nu}(\{t_{\alpha_{\nu}}\}) = \{\gamma\} \text{ with }\nu \in J' \setminus \beta^*, \\
x_0                                        & \text{otherwise. }
\end{array}
\right.
\]

\noindent By how $\mathcal{E}_0$ is defined we have $\mathcal{E}_0[\tau_0]_{\sim} = [\tau_1]_{\sim}$, but on the other hand we have

\begin{center}

$(\pi_{\alpha_{\nu}}\circ\Delta \tau)(\tau_0) = x_{t_{\alpha_{\nu}}} \neq x_0 = \pi_{\alpha_{\nu}}\tau_1$

\end{center}

\noindent for all $\nu \in J' \setminus \beta^*$, and as $|J' \setminus \beta^*| = \kappa$ we see that $[\Delta \tau_0]_{\sim} \neq [\tau_1]_{\sim} = \mathcal{E}_0[\tau_0]_{\sim}$, contradicting $p_{\sim} \circ \Delta = \mathcal{E}_0 \circ p_{\sim}$.
\end{proof}

\begin{lemma}\label{morespecifically}  Suppose that $\Delta: \mathfrak{U}^{\kappa} \rightarrow \mathfrak{U}^{\kappa}$ is a homomorphism such that $p_{\sim} \circ \Delta = \mathcal{E}_0 \circ p_{\sim}$ and that $\lambda < \kappa$.  The set

\begin{center}

$\{\alpha \in \kappa: \pi_{\alpha} \circ \Delta\text{ has no strong support of cardinality} \leq \lambda\}$

\end{center}

\noindent has cardinality $\kappa$.

\end{lemma}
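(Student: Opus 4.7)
The plan is to derive Lemma \ref{morespecifically} from Lemma \ref{Putittogether} by a straightforward cofinality argument. The first step is to observe that the hypothesis $\dagger(\kappa, f, g)$ forces $\lim_{\nu \to \kappa} f(\nu) = \kappa$: by condition (2) of Definition \ref{dagger} one has $|\nu| \leq g(\nu)$, so $g(\nu) \to \kappa$ as $\nu \to \kappa$, and by condition (1) one has $f(\nu) > 2^{g(\nu)} \geq g(\nu)$, whence also $f(\nu) \to \kappa$.

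Next, given the fixed $\lambda < \kappa$, I would define $J = \{\nu \in \kappa : f(\nu) > \lambda\}$. Since $\kappa$ is regular and $f(\nu)$ tends to $\kappa$, this set is cofinal and hence has cardinality $\kappa$. Applying Lemma \ref{Putittogether} to this $J$ produces $J' \subseteq J$ with $|J'| = \kappa$ and for each $\nu \in J'$ an index $\alpha_{\nu} \in I_{\nu}$ such that $\pi_{\alpha_{\nu}} \circ \Delta$ has no strong support of cardinality $< f(\nu)$.

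For $\nu \in J'$, any strong support for $\pi_{\alpha_{\nu}} \circ \Delta$ of cardinality $\leq \lambda$ would in particular be a strong support of cardinality $< f(\nu)$ (since $f(\nu) > \lambda$ on $J$), which is excluded. So each $\alpha_{\nu}$ belongs to the set appearing in the statement. Because the $\{I_{\nu}\}_{\nu \in \kappa}$ are pairwise disjoint, the $\alpha_{\nu}$ for $\nu \in J'$ are pairwise distinct, exhibiting a subset of the target set of cardinality $|J'| = \kappa$.

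The argument is really just a repackaging of Lemma \ref{Putittogether}, so no step is a serious obstacle; the only thing to verify carefully is that $f(\nu)$ eventually exceeds any prescribed $\lambda < \kappa$, and that follows directly from the definition of $\dagger(\kappa, f, g)$ as noted above.
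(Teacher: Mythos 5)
Your proposal is correct and follows essentially the same route as the paper: establish $\lim_{\nu\to\kappa}f(\nu)=\kappa$ from $\dagger(\kappa,f,g)$, take $J=\{\nu\in\kappa:\lambda<f(\nu)\}$ (of size $\kappa$ by regularity), and apply Lemma \ref{Putittogether} to get the desired $\kappa$-many coordinates $\alpha_\nu$ with no strong support of cardinality $<f(\nu)$, hence none of cardinality $\leq\lambda$. Your extra remarks (justifying $f(\nu)\to\kappa$ from conditions (1) and (2) of Definition \ref{dagger}, and the distinctness of the $\alpha_\nu$ via disjointness of the $I_\nu$) are correct fillings-in of details the paper leaves implicit.
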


\begin{proof}  Assume the hypotheses.  Recall that $\dagger(\kappa, f, g)$ implies that $\lim_{\nu \rightarrow \kappa}f(\nu) = \kappa$.  Take $J = \{\nu \in \kappa: \lambda < f(\nu)\}$, so $|J| = \kappa$ as $\kappa$ is regular.  By Lemma \ref{Putittogether} we obtain $J' \subseteq J$ with $|J'| = \kappa$ and for every $\nu \in J'$ we have $\alpha_{\nu} \in I_{\nu}$ such that $\pi_{\alpha_{\nu}}\circ \Delta$ does not have a strong support of cardinality $< f(\nu)$, in particular $\pi_{\alpha_{\nu}}\circ \Delta$ does not have support of cardinality $\leq \lambda$.
\end{proof}

\begin{lemma}\label{itisiso}  The image of $\mathcal{E}_0$ is isomorphic to $\mathfrak{U}^{\kappa}/\sim$.
\end{lemma}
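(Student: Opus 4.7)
The plan is to identify $\operatorname{im}(\mathcal{E}_0)$ with $\mathfrak{U}^{J}/\sim_{J}$, where $J = \bigcup_{\nu \in \kappa} I_{0, \nu}$ and $\sim_{J}$ denotes the analogous eventual-equality relation on $\mathfrak{U}^{J}$ (that is, $\rho_0 \sim_{J} \rho_1$ iff $|\{\alpha \in J : \rho_0(\alpha) \neq \rho_1(\alpha)\}| < \kappa$). Since $|I_{0, \nu}| = f(\nu) < \kappa$ for each $\nu$, $\kappa$ is regular, and $\lim_{\nu \to \kappa} f(\nu) = \kappa$, one has $|J| = \kappa$, so any bijection $\kappa \to J$ induces an isomorphism $\mathfrak{U}^{\kappa}/\sim \;\cong\; \mathfrak{U}^{J}/\sim_{J}$ of universal algebras. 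It therefore suffices to produce an isomorphism $\operatorname{im}(\mathcal{E}_0) \cong \mathfrak{U}^{J}/\sim_{J}$.

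First I would define the restriction $r: \operatorname{im}(\mathcal{E}_0) \to \mathfrak{U}^{J}/\sim_{J}$ by $r([\tau]_{\sim}) = [\tau \upharpoonright J]_{\sim_{J}}$. This is well-defined since two $\sim$-equivalent representatives disagree on a subset of $\kappa$ of cardinality less than $\kappa$, hence on a subset of $J$ of the same kind; and it is a homomorphism because the operations on both sides are coordinatewise.

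For the inverse I would set $e: \mathfrak{U}^{J}/\sim_{J} \to \operatorname{im}(\mathcal{E}_0)$ by $e([\rho]_{\sim_{J}}) = \mathcal{E}_0[\tilde\rho]_{\sim}$, where $\tilde\rho \in \mathfrak{U}^{\kappa}$ is any extension of $\rho$. The key observation, read directly off the defining formula
\[
\tau^{-1}(\{x\}) \cap I_{\nu} = \bigl(\sigma^{-1}(\{x\}) \cap I_{0, \nu}\bigr) \cup \Phi_{\eta, \nu}\bigl(\sigma^{-1}(\{x\}) \cap I_{0, \nu}\bigr)
\]
in Construction \ref{thefunction}, is that the output $\tau$ depends on $\sigma$ only through the sets $\sigma^{-1}(\{x\}) \cap I_{0, \nu}$, i.e.\ only through $\sigma \upharpoonright J$; any flexibility in the choice of $\eta$ arising from different extensions is absorbed by the coherence argument already carried out in Lemma \ref{constructionwelldefined}. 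The same observation simultaneously shows independence from the $\sim_{J}$-class, since two representatives of the same class may be extended to agree off $J$, making the extensions $\sim$-equivalent in $\mathfrak{U}^{\kappa}$.

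Finally I would verify that $r$ and $e$ are mutual inverses. For $r \circ e$, by construction $\tau \upharpoonright I_{0, \nu} = \sigma \upharpoonright I_{0, \nu}$, so $\tau \upharpoonright J = \tilde\rho \upharpoonright J = \rho$, giving $r(e([\rho]_{\sim_{J}})) = [\rho]_{\sim_{J}}$. For $e \circ r$, given $[\tau]_{\sim} \in \operatorname{im}(\mathcal{E}_0)$, idempotence $\mathcal{E}_0 \circ \mathcal{E}_0 = \mathcal{E}_0$ yields $\mathcal{E}_0[\tau]_{\sim} = [\tau]_{\sim}$, and taking $\tau$ itself as the extension of $\tau \upharpoonright J$ gives $e(r([\tau]_{\sim})) = [\tau]_{\sim}$. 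The homomorphism property of $e$ then follows from that of $\mathcal{E}_0$ together with the compatibility of extensions with coordinatewise operations. The only genuinely non-bookkeeping point in the whole argument is verifying that Construction \ref{thefunction} never consults $\sigma$ outside $J$ modulo $\sim$; once this is noted the rest is routine.
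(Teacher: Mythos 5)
Your proposal is correct and follows essentially the same route as the paper: the paper's map $\overline{H}$ is exactly your restriction to $J=\bigcup_{\nu}I_{0,\nu}$ composed with a reindexing bijection, its surjectivity argument is your section $e$ (extend arbitrarily off $J$ and apply $\mathcal{E}_0$), and its injectivity argument is your $e\circ r=\mathrm{id}$ computation via idempotence. The one point you rightly flag as non-routine --- that Construction \ref{thefunction} reads $\sigma$ only through $\sigma\upharpoonright J$ modulo $\sim$ --- is indeed the load-bearing observation in both versions.
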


\begin{proof}  For each $\nu \in \kappa$ we let $P_{\nu}: I_{0, \nu} \rightarrow I_{\nu}$ be a bijection and define bijection $P: \bigcup_{\nu \in \kappa} I_{0, \nu} \rightarrow \kappa$ by $\bigcup_{\nu \in \kappa}P_{\nu}$.  Let $H: \mathfrak{U}^{\kappa} \rightarrow \mathfrak{U}^{\kappa}$ be given by $(H(\sigma))(\alpha) = \sigma(P^{-1}(\alpha))$.  It is clear that $H$ is a homomorphism.  Letting $\overline{H}: \mathfrak{U}^{\kappa}/\sim \rightarrow \mathfrak{U}^{\kappa}/\sim$ be given by $\overline{H}([\sigma]_{\sim}) = [H(\alpha)]_{\sim}$ it is clear that $\overline{H}$ is also a homomorphism.

We claim that $\overline{H} \upharpoonright \im(\mathcal{E}_0)$ is an isomorphism from $\im(\mathcal{E}_0)$ to $\mathfrak{U}^{\kappa}$.  Letting $\tau \in \mathfrak{U}^{\kappa}$ be given, we take $\sigma \in \mathfrak{U}^{\kappa}$ to be such that $\sigma \upharpoonright I_{0, \nu}$ is given by $\sigma(\alpha) = \tau(P_{\nu}(\alpha))$ for each $\nu \in \kappa$ and let $\sigma \upharpoonright (\bigcup_{\nu \in \kappa}I_{1, \nu})$ be defined arbitrarily.  Then it is easy to see that $\overline{H}(\mathcal{E}_0[\sigma]_{\sim}) = [\tau]_{\sim}$.  Thus $\overline{H} \upharpoonright \im(\mathcal{E}_0)$ is onto $\mathfrak{U}^{\kappa}/\sim$.

To see that $\overline{H} \upharpoonright \im(\mathcal{E}_0)$ is injective, suppose that $[\sigma_0]_{\sim}, [\sigma_1]_{\sigma} \in \im(\mathcal{E}_0)$ is such that $\overline{H}([\sigma_0]_{\sim}) = \overline{H}([\sigma_1]_{\sim})$.  Then $H(\sigma_0) \sim H(\sigma_1)$, so in particular there exists some $\overline{\beta} \in \kappa$ such that $\nu \in \kappa \setminus \overline{\beta}$ implies $H(\sigma_0) \upharpoonright I_{\nu} = H(\sigma_1) \upharpoonright I_{\nu}$.  Then $\sigma_0 \upharpoonright I_{0, \nu} = \sigma_1 \upharpoonright I_{0, \nu}$ for all $\nu \in \kappa \setminus \overline{\beta}$.  Taking $\sigma \in \mathfrak{U}^{\kappa}$ to be such that $\sigma \upharpoonright I_{0, \nu} = \sigma_0 \upharpoonright I_{0, \nu}$ for all $\nu \in \kappa \setminus \overline{\beta}$, it is easy to see that $[\sigma_0]_{\sim} = \mathcal{E}_0[\sigma]_{\sim} = [\sigma_1]_{\sim}$.

\end{proof}

We anthologize the relevant facts.

\begin{theorem}\label{firstconstruction}  Suppose that $\kappa$ is inaccessible, $\dagger(\kappa, f, g)$, and that $\mathfrak{U} = (\chi, \mathcal{S})$ is a universal algebra with $2 \leq |\chi| < \kappa$.  Then there is a homomorphic projection $\mathcal{E}_0: \mathfrak{U}^{\kappa}/\sim \rightarrow \mathfrak{U}^{\kappa}/\sim$, with the image of $\mathcal{E}_0$ isomorphic to $\mathfrak{U}^{\kappa}/\sim$, such that any homomorphism $\Delta: \mathfrak{U}^{\kappa} \rightarrow \mathfrak{U}^{\kappa}$ making the following diagram commute

$$\xymatrix{\mathfrak{U}^{\kappa} \ar[r]^{\Delta} \ar[d]^{p_{\sim}}&
\mathfrak{U}^{\kappa} \ar[d]^{p_{\sim}} \\ \mathfrak{U}^{\kappa}/\sim
\ar[r]^{\mathcal{E}_0} & \mathfrak{U}^{\kappa}/\sim    }$$

\noindent has

\begin{center}

$\{\alpha \in \kappa: \pi_{\alpha} \circ \Delta\text{ has no strong support of cardinality} \leq \lambda\}$

\end{center}

\noindent of cardinality $\kappa$ for every $\lambda < \kappa$.

\end{theorem}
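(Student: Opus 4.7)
The plan is to assemble the results just proven into a single statement. First I would apply Construction \ref{thefunction} to produce a candidate assignment $\sigma \mapsto \tau$ on $\mathfrak{U}^{\kappa}$; this uses exactly the hypotheses on $\kappa$, $f$, $g$, and $\mathfrak{U}$ to invoke Lemma \ref{sequences} for the necessary Boolean algebras $\B_{\xi, \nu}$ and $\jmath$-mappings $\Phi_{\xi, \nu}$. The construction yields an $\mathcal{E}_0$ defined a priori only after choices of $\eta$ and $\beta$ bounding the ``bad'' coordinates.

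Next I would invoke Lemma \ref{constructionwelldefined}, which does all the heavy lifting at once: it shows that the equivalence class of $\tau$ does not depend on the ancillary choices of $\eta$ and $\beta$, that it depends only on $[\sigma]_{\sim}$, that the resulting $\mathcal{E}_0 : \mathfrak{U}^{\kappa}/\sim \to \mathfrak{U}^{\kappa}/\sim$ is a homomorphism, and that $\mathcal{E}_0 \circ \mathcal{E}_0 = \mathcal{E}_0$. Then Lemma \ref{itisiso} delivers the isomorphism between $\im(\mathcal{E}_0)$ and $\mathfrak{U}^{\kappa}/\sim$ via the coordinate-shift map $H$ coming from a bijection $P : \bigcup_{\nu \in \kappa} I_{0, \nu} \to \kappa$.

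Finally, the statement about strong supports is precisely the conclusion of Lemma \ref{morespecifically}: for any $\Delta$ lifting $\mathcal{E}_0$ and any $\lambda < \kappa$, the set
\[
\{\alpha \in \kappa : \pi_{\alpha} \circ \Delta \text{ has no strong support of cardinality} \leq \lambda\}
\]
has cardinality $\kappa$. Nothing further needs to be proved; the theorem is obtained by quoting these lemmas in sequence. Since every substantial obstacle has already been dispatched in the preceding section, the only task for the proof is bookkeeping, and there is no genuinely hard step remaining at this stage.
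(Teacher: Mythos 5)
Your proposal matches the paper exactly: the theorem is stated as an anthology of the preceding results, obtained by combining Construction \ref{thefunction} with Lemmas \ref{constructionwelldefined}, \ref{itisiso}, and \ref{morespecifically}. Nothing further is required.
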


\end{section}

\begin{section}{A homomorphic projection using averages} \label{Secondconstruction}

We begin with a definition.

\begin{definition}  Let $\mathfrak{U} = (\chi, \mathcal{S})$ be a universal algebra.  We say that a homomorphism $\Av: \mathfrak{U} \times \mathfrak{U} \rightarrow \mathfrak{U}$ is an \emph{average} if for all $x_0, x_1 \in \chi$ we have $\Av(x_0, x_1) = \Av(x_1, x_0)$ and $\Av(x_0, x_0) = x_0$.

\end{definition}

\begin{construction}\label{usingaverage}  Suppose that $\kappa$ is inaccessible and $\dagger(\kappa, f, g)$ and that $\mathfrak{U} = (\chi, \mathcal{S})$ is a universal algebra with $2 \leq |\chi| < \kappa$ and that $\Av$ is an average for $\mathfrak{U}$.  Take $\{I_{\nu}\}_{\nu \in \kappa}$, $\{\B_{\xi, \nu}\}_{\xi\in \kappa^+, \nu \in \kappa}$, $\{\Phi_{\xi, \nu}\}_{\xi \in \kappa^+, \nu \in \kappa}$, and $\{I_{0, \nu}\}_{\nu\in \kappa}$ and $\{I_{1, \nu}\}_{\nu \in \kappa}$ as in Lemma \ref{sequences}.

Recall that for $\xi \in \kappa^+$ and $\nu \in \kappa$ the mapping $\Phi_{\xi, \nu}$ is a $\jmath$-mapping on $\B_{\xi, \nu}$, and particularly the restriction of $\Phi_{\xi, \nu}$ to $\{A \in \Atoms(\B_{\xi, \nu}): A \subseteq I_{0, \nu}\}$ gives a bijection with $\{A \in \Atoms(\B_{\xi, \nu}): A \subseteq I_{1, \nu}\}$.  Let $\gimel_{\xi, \nu}: \Atoms(\B_{\xi, \nu}) \rightarrow \Atoms(\B_{\xi, \nu})$ be the involution of atoms defined by

$$
\begin{array}{ll}
\gimel_{\xi, \nu} & = \Phi_{\xi, \nu} \upharpoonright \{A \in \Atoms(\B_{\xi, \nu}): A \subseteq I_{0, \nu}\}\vspace*{2mm}\\
& \text{      }\cup (\Phi_{\xi, \nu} \upharpoonright \{A \in \Atoms(\B_{\xi, \nu}): A \subseteq I_{0, \nu}\})^{-1}

\end{array}
$$

\noindent Given $\sigma \in \text{}^{\kappa}\chi$, for each $x \in \chi$ we select $\xi_x \in \kappa^+$ and $\beta_x \in \kappa$ such that $\sigma^{-1}(\{x\}) \cap I_{\nu} \in \B_{\xi_x, \nu}$ for all $\nu \in \kappa \setminus \beta_x$.  Let $\eta \in \kappa^+$ be greater than all elements in $\{\xi_x\}_{x\in \chi}$.  For each $x \in \chi$ select $\beta_x' \in \kappa$ such that $\B_{\xi_x, \nu} \subseteq \B_{\eta, \nu}$ and $\Phi_{{\xi_x, \nu}} \subseteq \Phi_{\eta, \nu}$ for all $\nu \in \kappa \setminus \beta_x'$.  Select $\beta \in \kappa$ which is greater than all elements in $\{\beta_x\}_{x \in \chi} \cup \{\beta_x'\}_{x \in \chi}$.  Then we have for all $\nu \in \kappa \setminus \beta$ and $x \in \chi$ that $\sigma^{-1}(\{x\}) \in \B_{\xi_x, \nu} \subseteq \B_{\eta, \nu}$ and $\Phi_{\xi_x, \nu} \subseteq \Phi_{\eta, \nu}$.  Let $\tau \in \text{}^{\kappa}\chi$ be defined by letting $\tau \upharpoonright A = \Av(x_0, x_1)$ for $A \in \Atoms(\B_{\eta, \nu})$, where $\sigma \upharpoonright A$ is constantly $x_0$ and $\sigma \upharpoonright \gimel_{\eta, \nu}(A)$ is $x_1$ (and define $\tau$ arbitrarily on $\bigcup_{\nu \in \eta} I_{\nu}$).

\end{construction}

\begin{lemma}\label{homindeed}  The assignment $\sigma \mapsto \tau$ given in Construction \ref{usingaverage} induces a well-defined homomorphic projection $\mathcal{E}_1: \mathfrak{U}^{\kappa}/\sim \rightarrow \mathfrak{U}^{\kappa}/\sim$.
\end{lemma}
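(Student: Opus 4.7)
The plan is to mirror the pattern of Lemma \ref{constructionwelldefined}, verifying in sequence that: (a) the representative $\tau$ depends (up to $\sim$) only on $[\sigma]_\sim$ and not on the auxiliary choices of $\eta$ and $\beta$; (b) $\mathcal{E}_1 \circ \mathcal{E}_1 = \mathcal{E}_1$; and (c) $\mathcal{E}_1$ commutes with every operation $s \in \mathcal{S}$. Throughout, the key structural fact will be that for $\eta \in \kappa^+$ sufficiently large, each atom of $\B_{\eta,\nu}$ lies entirely in $I_{0,\nu}$ or in $I_{1,\nu}$, and $\gimel_{\eta,\nu}$ pairs atoms on one side with atoms on the other.

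For (a), suppose two admissible choices $\eta$ and $\overline{\eta}$ yield outputs $\tau$ and $\overline{\tau}$. Pick $\underline{\eta} \in \kappa^+$ above both and a large enough $\underline{\beta}$ so that $\B_{\eta,\nu}, \B_{\overline{\eta},\nu} \subseteq \B_{\underline{\eta},\nu}$ and $\Phi_{\eta,\nu}, \Phi_{\overline{\eta},\nu} \subseteq \Phi_{\underline{\eta},\nu}$ for all $\nu \geq \underline{\beta}$. Every atom $A \in \Atoms(\B_{\eta,\nu})$ then decomposes into atoms $A_1, \ldots, A_k \in \Atoms(\B_{\underline{\eta},\nu})$; since $\Phi_{\eta,\nu}(A) = \bigcup_i \Phi_{\underline{\eta},\nu}(A_i)$, the involution $\gimel_{\underline{\eta},\nu}$ refines $\gimel_{\eta,\nu}$ in a pairing-compatible way. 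Because $\sigma$ is constant on $A$ and on $\gimel_{\eta,\nu}(A)$ (with values $x_0, x_1$, say), it is also constant on each $A_i$ with value $x_0$ and on each $\gimel_{\underline{\eta},\nu}(A_i)$ with value $x_1$; hence the averaged value on $A_i$ computed via $\underline{\eta}$ is again $\Av(x_0, x_1)$. Thus the output via $\eta$ agrees with that via $\underline{\eta}$ on every $I_{\nu}$ with $\nu \geq \underline{\beta}$, and by symmetry so does the output via $\overline{\eta}$, giving $\tau \sim \overline{\tau}$. If in addition $\sigma \sim \sigma'$, one may use the same $\eta, \beta$ for both; on every $I_\nu$ disjoint from the small symmetric difference the averaged outputs are literally equal, and there are fewer than $\kappa$ excluded $I_\nu$'s.

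For (b), observe that any admissible $\eta$ for $\sigma$ is also admissible for $\tau$: the preimages $\tau^{-1}(\{y\}) \cap I_{\nu}$ are unions of atoms of $\B_{\eta,\nu}$ for $\nu \geq \beta$. On any pair $A, \gimel_{\eta,\nu}(A)$ the function $\tau$ takes the same value $y = \Av(x_0, x_1)$, so the second application of $\mathcal{E}_1$ produces $\Av(y, y) = y$ on both atoms. Thus $\mathcal{E}_1[\tau]_{\sim} = [\tau]_{\sim}$.

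For (c), let $s \in \mathcal{S}$ have arity $n$ and pick $\eta$ and $\beta$ that work simultaneously for $\sigma_0, \ldots, \sigma_{n-1}$. On each atom $A \in \Atoms(\B_{\eta,\nu})$ with $\nu \geq \beta$, each $\sigma_i$ takes a constant value $x_i^A$ while $\sigma_i \upharpoonright \gimel_{\eta,\nu}(A)$ is the constant $y_i^A$. Then $\mathcal{E}_1(s([\sigma_0]_\sim, \ldots, [\sigma_{n-1}]_\sim))$ has value $\Av(s(x_0^A, \ldots, x_{n-1}^A), s(y_0^A, \ldots, y_{n-1}^A))$ on $A$, while $s(\mathcal{E}_1[\sigma_0]_\sim, \ldots, \mathcal{E}_1[\sigma_{n-1}]_\sim)$ has value $s(\Av(x_0^A, y_0^A), \ldots, \Av(x_{n-1}^A, y_{n-1}^A))$. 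These agree precisely because $\Av$ is a homomorphism: the identity $\Av(s(\overline{x}), s(\overline{y})) = s(\Av(x_0,y_0), \ldots, \Av(x_{n-1},y_{n-1}))$ is the homomorphism condition for $\Av$, and symmetry of $\Av$ gives the corresponding equality on $\gimel_{\eta,\nu}(A)$. The main obstacle will be the refinement bookkeeping in (a), which relies crucially on $\Phi_{\eta,\nu} \subseteq \Phi_{\underline{\eta},\nu}$ and the $\jmath$-mapping property to ensure the $\gimel$-pairings interact correctly with atomic refinement across $I_{0,\nu}$ and $I_{1,\nu}$.
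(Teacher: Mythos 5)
Your proposal is correct and follows essentially the same route as the paper: well-definedness via a common refinement $\underline{\eta}$ using $\Phi_{\eta,\nu}\subseteq\Phi_{\underline{\eta},\nu}$ and the $\jmath$-mapping property, idempotence from $\Av(y,y)=y$, and the homomorphism property from the identity $\Av(s(\overline{x}),s(\overline{y}))=s(\Av(x_0,y_0),\ldots,\Av(x_{n-1},y_{n-1}))$. The only cosmetic difference is that the paper compares the two outputs by intersecting atoms of $\B_{\eta,\nu}$ and $\B_{\overline{\eta},\nu}$ directly rather than passing each through $\underline{\eta}$ separately (and note an atom may split into infinitely many finer atoms, not just $A_1,\ldots,A_k$), but the substance is the same.
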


\begin{proof}  We check well-definedness first.  Imagine that we select $\overline{\xi_x} \in \kappa^+$ and $\overline{\beta_x}$ such that $f^{-1}(\{x\}) \cap I_{\nu} \in \B_{\overline{\xi_x}, \nu}$ for all $\nu \in \overline{\beta_x}$, and select $\overline{\eta} \in \kappa^+$ with $\eta > \sup\{\overline{\xi_{x}}\}_{x \in \chi}$, set $\{\overline{\beta_x'}\}_{x \in \chi}$ with $\B_{\overline{\xi_x}, \nu} \subseteq \B_{\overline{\eta}, \nu}$ and $\Phi_{\overline{\xi_x}, \nu} \subseteq \Phi_{\overline{\eta}, \nu}$ for all $\nu \in \kappa \setminus \overline{\beta_x'}$ and $\overline{\beta} > \sup\{\overline{\beta_x}\}_{x \in \chi} \cup \{\overline{\beta_x'}\}_{x \in \chi}$.   Let $\overline{\tau}$ be given by $\overline{\tau}\upharpoonright A' = \Av(x_0, x_1)$ where $A' \in \Atoms(\B_{\overline{\eta}, })$ and $\sigma\upharpoonright A'$ is constantly $x_0$ and $\sigma \upharpoonright \gimel_{\overline{\eta}, \nu}(A')$ is $x_1$, and define $\overline{\tau} \upharpoonright \bigcup_{\nu \in \kappa \setminus \overline{\beta}}$ arbitrarily.

Select $\underline{\eta} \in \kappa^+$ with $\underline{\eta} > \eta, \overline{\eta}$ and $\underline{\beta} \in \kappa$ which is large enough that $\B_{\eta, \nu} \cup \B_{\overline{\eta}, \nu} \subseteq \B_{\underline{\eta}, \nu}$ and $\Phi_{\eta, \nu} \cup \Phi_{\overline{\eta}, \nu} \subseteq \Phi_{\underline{\eta}, \nu}$ for all $\nu \in \kappa \setminus \underline{\beta}$.  Now we fix $\nu \in \kappa \setminus \underline{\beta}$.  Take $A \in \Atoms(\B_{\eta, \nu})$ and $A' \in \Atoms(\B_{\overline{\eta}, \nu})$ such that $A \cap A' \neq \emptyset$.  We know $\sigma \upharpoonright A$ and $\sigma \upharpoonright A'$ are both constant, so $\sigma \upharpoonright A \cup A'$ is constant, say constantly $x_0$.  If $A \subseteq I_{0, \nu}$, and therefore $A' \subseteq I_{0, \nu}$, then $\Phi_{\underline{\eta}, \nu}(A) = \Phi_{\eta, \nu}(A) = \gimel_{\eta, \nu}(A)$ and $\Phi_{\underline{\eta}, \nu}(A') = \Phi_{\overline{\eta}, \nu}(A') = \gimel_{\overline{\eta}, \nu}(A')$.  Then $\sigma \upharpoonright \gimel_{\eta, \nu}(A)$ and $\sigma \upharpoonright \gimel_{\overline{\eta}, \nu}(A')$ are each constant functions, and $\gimel_{\eta, \nu}(A) \cap \gimel_{\overline{\eta}, \nu}(A') \neq \emptyset$, so $\sigma \upharpoonright \gimel_{\eta, \nu}(A) \cup \gimel_{\overline{\eta}, \nu}(A')$ is constant, say constantly $x_1$.  Therefore $\tau \upharpoonright (A \cap A')$ is constantly $\Av(x_0, x_1)$ and so is $\overline{\tau} \upharpoonright (A \cap A')$.  Then $\tau \upharpoonright (\bigcup_{\nu \in \kappa \setminus \overline{\beta}} I_{\nu}) = \overline{\tau} \upharpoonright (\bigcup_{\nu \in \kappa \setminus \overline{\beta}} I_{\nu})$ and $\tau \sim \overline{\tau}$.

When $\sigma \sim \sigma_1$, the element $\tau_1 \in \mathfrak{U}^{\kappa}$ with $\sigma_1 \mapsto \tau_1$ under Construction \ref{usingaverage} has $\tau \sim \tau_1$.  The check is straightforward and follows appropriate changes to the comparable claim in Lemma \ref{constructionwelldefined}.  Thus we have a well-defined function $\mathcal{E}_1$.

To see that $\mathcal{E}_1 \circ \mathcal{E}_1 = \mathcal{E}_1$ we take $\sigma$, $\eta$ and $\beta$ and $\tau$ as before.  It is clear that for every $x \in \chi$ and $\nu \in \kappa \setminus \beta$ we have $\tau^{-1}(\{x\}) \cap I_{\nu} \in \B_{\eta, \nu}$.  For each atom $A \in \Atoms(\B_{\eta, \nu})$ we have $\tau \upharpoonright A$ and $\tau \upharpoonright \gimel_{\eta, \nu}(A)$ are each constantly $x$, by how $\tau$ is constructed.  Then since $\Av(x, x) = x$, we see that again $\mathcal{E}_1[\tau]_{\sim} = [\tau]_{\sim}$, so $\mathcal{E}_1 \circ \mathcal{E}_1 = \mathcal{E}_1$.

Now we check that $\mathcal{E}_1$ is a homomorphism.  Let $s(w_0, \ldots, w_{n-1}) \in \mathcal{S}$ and also $[\sigma_0]_{\sim}, \ldots, [\sigma_{n-1}]_{\sim}$.  We take $\eta \in \kappa^+$ large enough and $\beta \in \kappa$ large enough that for every $x \in \chi$ and $\nu \in \kappa \setminus \beta$ we have $\sigma_{i}^{-1}(x) \cap I_{\nu} \in \B_{\eta, \nu}$.  For $A \in \Atoms(\B_{\eta, \nu})$ and $0 \leq i < n$ take $\tau_i \upharpoonright A$ to be constantly $\Av(x_0, x_1)$ where $\sigma_i \upharpoonright A$ and $\sigma_i \upharpoonright \gimel_{\eta, \nu}(A)$ are constantly $x_0$ and $x_1$, respectively and $\tau_i$ is defined arbitrarily elsewhere.  Then $\mathcal{E}_1([\sigma_i]_{\sim}) = [\tau_i]_{\sim}$.

Let $\sigma$ be defined by having $\sigma \upharpoonright A$ be constantly $x$ when $A \in \Atoms(\B_{\eta, \nu})$ where $\sigma_i \upharpoonright A$ is constantly $x_i$ and $s(x_0, \ldots, x_{n-1}) = x$, and define $\sigma$ arbitrarily elsewhere.  Thus $[\sigma]_{\sim} = s([\sigma_0]_{\sim}, \ldots, [\sigma_{n-1}]_{\sim})$.  Let $\tau$ be defined by having $\tau \upharpoonright A$ be constantly $x$ when $A \in \Atoms(\B_{\eta, \nu})$ where $\tau_i \upharpoonright A$ is constantly $x_i$ and $s(x_0, \ldots, x_{n-1}) = x$, and define $\tau$ arbitrarily elsewhere.  Thus $\mathcal{E}_1[\sigma]_{\sim} = [\tau]_{\sim}$.  Let $\overline{\tau}$ be defined by $\overline{\tau} \upharpoonright A$ to be constantly $\Av(x_0, x_1)$ where $\sigma \upharpoonright A$ and $\sigma \upharpoonright \gimel_{\eta, \nu}(A)$ are constantly $x_0$ and $x_1$, respectively and $\overline{\tau}$ is defined arbitrarily elsewhere.  Then $\mathcal{E}_1[\sigma]_{\sim} = [\overline{\tau}]_{\sim}$.

Fix $\nu \in \kappa \setminus \beta$ and $A \in \Atoms(\B_{\eta, \nu})$ we let $\sigma_i \upharpoonright A$ be constantly $x_{0, i}$ and $\sigma_i \upharpoonright \gimel_{\eta, \nu}(A)$ be constantly $x_{1, i}$.  Then $\tau_i \upharpoonright A$ is constantly $\Av(x_{0, }i, x_{1, i})$, and $\tau \upharpoonright A$ is constantly $s(\Av(x_{0, 0}, x_{1, 0}), \ldots, \Av(x_{0, n-1}, x_{1, n-1}))$, and $\overline{\tau} \upharpoonright A$ is constantly $\Av(s(x_{0, 0}, \ldots, x_{0, n-1}), s(x_{1, 0}, \ldots, x_{1, n-1}))$.  Then $\tau \upharpoonright A = \overline{\tau} \upharpoonright A$ since $\Av$ is a homomorphism, and so $\tau \sim \overline{\tau}$.

\end{proof}

\begin{lemma}\label{nonemptystrongsupp}  Suppose that $\Delta: \mathfrak{U}^{\kappa} \rightarrow \mathfrak{U}^{\kappa}$ is such that $p_{\sim} \circ \Delta = \mathcal{E}_1 \circ p_{\sim}$.  If $Y_{\alpha}$ is a strong support for $\pi_{\alpha} \circ \Delta$ for each $\alpha \in J \subseteq \kappa$ then $|\{\alpha \in J: Y_{\alpha} = \emptyset\}| < \kappa$.
\end{lemma}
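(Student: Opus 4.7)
The plan is to mimic the argument of Lemma \ref{nonemptystrong} for $\mathcal{E}_0$, with the only new ingredient being a short verification that $\mathcal{E}_1$ still behaves like ``the identity'' on sequences that are constant on blocks $I_\nu$. Assume for contradiction that the set $J^* = \{\alpha \in J : Y_\alpha = \emptyset\}$ has cardinality $\kappa$. Since each $I_\nu$ has $|I_\nu| = f(\nu) < \kappa$ and $\kappa$ is inaccessible (in particular regular), the set $\{\nu \in \kappa : I_\nu \cap J^* \neq \emptyset\}$ has cardinality $\kappa$, so after replacing $J^*$ by choosing one element from each such intersection we may further assume that $p \upharpoonright J^*$ is injective while $|J^*| = \kappa$; rename this refined set $J$. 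By Observation \ref{whenempty}, for each $\alpha \in J$ the map $\pi_\alpha \circ \Delta$ is constant with some value $x_\alpha \in \chi$, and using $|\chi| \geq 2$ we choose $x_\alpha' \in \chi \setminus \{x_\alpha\}$.

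Now define $\sigma \in \mathfrak{U}^\kappa$ by setting $\sigma \upharpoonright I_{p(\alpha)}$ to be constantly $x_\alpha'$ for every $\alpha \in J$, and defining $\sigma$ arbitrarily on the remaining coordinates. The key observation, which replaces the direct computation used for $\mathcal{E}_0$, is that when one applies Construction \ref{usingaverage} to any $\sigma$ whose restriction to $I_\nu$ is constantly some value $y$, then for each atom $A \in \Atoms(\B_{\eta, \nu})$ both $\sigma \upharpoonright A$ and $\sigma \upharpoonright \gimel_{\eta, \nu}(A)$ are constantly $y$, so the resulting $\tau$ satisfies $\tau \upharpoonright A = \Av(y, y) = y$; hence $\tau \upharpoonright I_\nu$ is also constantly $y$. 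Applying this to the above $\sigma$, if $\tau$ is any representative of $\mathcal{E}_1[\sigma]_\sim$, then after discarding fewer than $\kappa$ coordinates we have $\tau \upharpoonright I_{p(\alpha)}$ constantly $x_\alpha'$, and in particular $\tau(\alpha) = x_\alpha'$ for cofinally many $\alpha \in J$.

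On the other hand, $(\pi_\alpha \circ \Delta)\sigma = x_\alpha \neq x_\alpha'$ for every $\alpha \in J$. Since $|J| = \kappa$, this yields $[\Delta \sigma]_\sim \neq [\tau]_\sim = \mathcal{E}_1[\sigma]_\sim$, contradicting $p_\sim \circ \Delta = \mathcal{E}_1 \circ p_\sim$. The only step requiring care is the constancy observation for $\mathcal{E}_1$ in the middle paragraph; once that is noted, the rest is a direct translation of Lemma \ref{nonemptystrong}.
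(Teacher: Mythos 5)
Your proof is correct. It is worth noting, though, that it is not quite the route the paper takes for this lemma: the paper's proof of Lemma \ref{nonemptystrongsupp} compares \emph{two} inputs, a constant $\sigma_0$ and a $\sigma_1$ obtained by perturbing $\sigma_0$ on one half $I_{1-r(\alpha),p(\alpha)}$ of each relevant block, and derives the contradiction from the fact that a constant $\pi_\alpha\circ\Delta$ cannot separate $[\sigma_0]_\sim$ from $[\sigma_1]_\sim$ even though $\mathcal{E}_1$ sends them to classes differing on $\kappa$ many coordinates of $J$ (via $\Av(x_0,x_1)\neq x_0$ for a suitable pair). Your argument instead ports the single-input argument of Lemma \ref{nonemptystrong} verbatim, and the one genuinely new point you needed --- that $\mathcal{E}_1$ fixes (mod $\sim$) any class represented by a block-constant function, because each atom $A\subseteq I_\nu$ and its partner $\gimel_{\eta,\nu}(A)\subseteq I_\nu$ both carry the same constant $y$ and $\Av(y,y)=y$ --- you state and justify correctly. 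A small advantage of your version is that it uses only the idempotence axiom $\Av(y,y)=y$ together with $|\chi|\geq 2$, whereas the paper's version must first locate a pair with $\Av(x_0,x_1)\neq x_0$ (which does exist by symmetry of $\Av$, but requires a word of justification). Both proofs rest on the same two pillars: Observation \ref{whenempty} forcing $\pi_\alpha\circ\Delta$ to be constant on a $\kappa$-sized set with $p$ injective on it, and an explicit computation of $\mathcal{E}_1$ on a block-structured input.
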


\begin{proof}  Assume the hypotheses, and suppose for contradiction that $|\{\alpha \in J: Y_{\alpha} = \emptyset\}| = \kappa$.  Without loss of generality we can assume that $Y_{\alpha} \neq \emptyset$ for all $\alpha \in J$ and that $p \upharpoonright J$ is injective.  Define function $r: J \rightarrow \{0, 1\}$ by $\alpha \in I_{r(\alpha), p(\alpha)}$.  As $|\chi| \geq 2$ there exist $x_0, x_1 \in \chi$ such that $\Av(x_0, x_1) \neq x_0$.  Define $\sigma_0 \in \mathfrak{U}^{\kappa}$ to be constantly $x_0$ and define $\sigma_1 \in \mathfrak{U}^{\kappa}$ by

\[
\sigma_1(\gamma) = \left\{
\begin{array}{ll}
x_1
                                            & \text{if } \gamma \in  I_{1-r(\alpha), p(\alpha)} \text{ for some }\alpha \in J, \\
x_0                                        & \text{otherwise. }
\end{array}
\right.
\]

\noindent It is clear that $\mathcal{E}_1[\sigma_0]_{\sim} = [\sigma_0]_{\sim}$ and $\mathcal{E}_1[\sigma_1]_{\sim} = [\tau_1]_{\sim}$ where

\[
\tau_1(\gamma) = \left\{
\begin{array}{ll}
\Av(x_0, x_1)
                                            & \text{if } \gamma \in  I_{p(\alpha)} \text{ for some }\alpha \in J, \\
x_0                                        & \text{otherwise }
\end{array}
\right.
\]

\noindent and in particular $$|\{\alpha \in J: (\pi_{\alpha} \circ \Delta)\sigma_0 \neq (\pi_{\alpha} \circ \Delta)\sigma_1\}| = \kappa$$ whereas by Observation \ref{whenempty} $$\{\alpha \in J: \pi_{\alpha} \circ \Delta\text{ is constant}\} \equiv_{\kappa} J$$ which gives a contradiction.

\end{proof}

\begin{lemma}  \label{fallingintocorrectplace}  Suppose that we have a homomorphism $\Delta: \mathfrak{U}^{\kappa} \rightarrow \mathfrak{U}^{\kappa}$ such that $p_{\sim} \circ \Delta = \mathcal{E}_1 \circ p_{\sim}$.  Suppose also that $J \subseteq \kappa$ with $|J| = \kappa$ and for each $\nu \in J$ and each $\alpha \in I_{\nu}$ the homomorphism $\pi_{\alpha} \circ \Delta$ has strong support $Y_{\alpha}$ with $|Y_{\alpha}| < \kappa$.  Then there exists $J' \subseteq J$ with $|J'| = \kappa$ and for all $\nu \in J'$ and $\alpha \in I_{\nu}$ we have

\begin{enumerate}[(a)]

\item $Y_{\alpha} \subseteq I_{\nu}$; and

\item $Y_{\alpha} \cap I_{0, \nu} \neq \emptyset$.

\end{enumerate}

\end{lemma}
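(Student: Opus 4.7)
The plan is to mirror Lemma~\ref{strongsupportbehavior} for part (a) and then introduce a new symmetry argument for part (b). First I will thin $J$ using Lemma~\ref{nonemptystrongsupp} so that all $Y_\alpha$ are nonempty, arrange that $p \upharpoonright J$ is injective, and then invoke the regularity of $\kappa$ together with Lemma~\ref{getoffeventually} to select $J'' \subseteq J$ with $|J''| = \kappa$ such that $\{I_\nu \cup \bigcup_{\alpha \in I_\nu} Y_\alpha\}_{\nu \in J''}$ is pairwise disjoint.

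For (a), I will suppose toward contradiction that $|\{\nu \in J'' : (\exists \alpha \in I_\nu)\, Y_\alpha \not\subseteq I_\nu\}| = \kappa$ and pick witnesses $\alpha_\nu \in I_\nu$ together with $t_{\alpha_\nu} \in Y_{\alpha_\nu} \setminus I_\nu$. Using strong support at $t_{\alpha_\nu}$, I will choose $x_{t_{\alpha_\nu}} \in \chi$ so that altering the constant-$x_0$ function $\sigma_0$ only at $t_{\alpha_\nu}$ changes $\pi_{\alpha_\nu} \circ \Delta$, and form $\tau_0$ by altering $\sigma_0$ at all these $t_{\alpha_\nu}$ simultaneously. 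By the disjointness, $\tau_0 \upharpoonright I_\nu$ is the constant $x_0$ for every $\nu \in J''$, so Construction~\ref{usingaverage} (together with $\Av(x_0, x_0) = x_0$) yields a representative of $\mathcal{E}_1[\tau_0]_\sim$ equal to $x_0$ on each such $I_\nu$. Hence $(\Delta\tau_0)(\alpha_\nu) = x_0 = (\Delta\sigma_0)(\alpha_\nu)$ for $\kappa$ many $\nu$; but disjointness also ensures $\tau_0 \upharpoonright Y_{\alpha_\nu}$ agrees with the single-point alteration of $\sigma_0$ at $t_{\alpha_\nu}$, so strong support forces $(\Delta\tau_0)(\alpha_\nu) \neq (\Delta\sigma_0)(\alpha_\nu)$, a contradiction.

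For (b), let $J'_a$ be the output of (a); suppose toward contradiction that $|\{\nu \in J'_a : (\exists \alpha \in I_\nu)\, Y_\alpha \cap I_{0, \nu} = \emptyset\}| = \kappa$, with witnesses $\alpha_\nu \in I_\nu$ having $Y_{\alpha_\nu} \subseteq I_{1, \nu}$. Since $\Av$ is symmetric and $|\chi| \geq 2$, I can select $x_0, x_1 \in \chi$ with $\Av(x_0, x_1) \neq x_0$: if for some $a \neq b$ in $\chi$ we have $\Av(a, b) = a$, then setting $x_0 = b$, $x_1 = a$ gives $\Av(x_0, x_1) = a \neq b = x_0$ by symmetry, and otherwise any $a \neq b$ suffices. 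Let $\sigma_0$ be constantly $x_0$ and define $\sigma_1(\gamma) = x_1$ on $\bigcup_{\nu \in J'_a \setminus J'} I_{0, \nu}$ and $\sigma_1(\gamma) = x_0$ elsewhere. Because $Y_{\alpha_\nu} \subseteq I_{1, \nu}$ and $\sigma_1 \upharpoonright I_{1, \nu} = x_0$, we have $\sigma_0 \upharpoonright Y_{\alpha_\nu} = \sigma_1 \upharpoonright Y_{\alpha_\nu}$, so strong support forces $(\Delta\sigma_0)(\alpha_\nu) = (\Delta\sigma_1)(\alpha_\nu)$.

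Meanwhile, the preimages $\sigma_1^{-1}(\{x_0\}) \cap I_\nu$ and $\sigma_1^{-1}(\{x_1\}) \cap I_\nu$ already lie in $\B_{\Psi, \nu}$, so Construction~\ref{usingaverage} applies with any $\eta$, and the involution-symmetric averaging formula gives $\tau_1 \upharpoonright A = \Av(x_0, x_1)$ on every atom $A \in \Atoms(\B_{\eta, \nu})$ with $A \subseteq I_\nu$ (for $\nu \in J'_a \setminus J'$), since on such an atom the values of $\sigma_1$ on $A$ and on its involution-partner are $\{x_0, x_1\}$ in some order. Thus $(\Delta\sigma_1)(\alpha_\nu) = \Av(x_0, x_1)$ and $(\Delta\sigma_0)(\alpha_\nu) = x_0$ for $\kappa$ many $\alpha_\nu$; intersecting these cofinal sets yields $\Av(x_0, x_1) = x_0$, contradicting the choice. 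The main obstacle is getting the symmetry in (b) right: one has to verify that altering $\sigma_0$ only on the $I_{0, \nu}$ side uniformly changes $\mathcal{E}_1$'s output on all of $I_\nu$ (including the $I_{1, \nu}$ side where $Y_{\alpha_\nu}$ is hidden), which is precisely the averaging symmetry distinguishing $\mathcal{E}_1$ from $\mathcal{E}_0$ and pinning $Y_\alpha$ to the $I_{0, \nu}$ half.
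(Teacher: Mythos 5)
Your proof is correct and follows essentially the same route as the paper: part (a) by adapting the disjointness-plus-single-point-perturbation argument of Lemma \ref{strongsupportbehavior}, and part (b) by setting $\sigma_1$ equal to $x_1$ on the $I_{0,\nu}$ half for the bad indices, observing that $\mathcal{E}_1$ averages this change across all of $I_\nu$ while a strong support hidden in $I_{1,\nu}$ cannot detect it, forcing $\Av(x_0,x_1)=x_0$. Your explicit verification that $|\chi|\geq 2$ and symmetry yield a pair with $\Av(x_0,x_1)\neq x_0$ is a detail the paper leaves implicit.
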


\begin{proof}  By the same proof as that of Lemma \ref{strongsupportbehavior} but with every mention of $Y_{0, \nu}$ being replaced with $Y_{\nu}$, we obtain a $J'' \subseteq J$ with $|J''| = \kappa$ and for all $\nu \in J''$ and $\alpha \in I_{\nu}$ we have $Y_{\alpha} \subseteq I_{\nu}$.

Now suppose for contradiction that $J_1 = \{\nu \in J'': (\exists \alpha \in I_{\nu}) Y_{\alpha} \cap I_{0, \nu} = \emptyset\}$ is of cardinality $\kappa$.  Select for each $\nu \in J_1$ a $\alpha_{\nu} \in I_{\nu}$ with $Y_{\alpha} \cap I_{0, \nu} = \emptyset$.  As $|\chi| \geq 2$ select $x_0, x_1 \in \chi$ such that $\Av(x_0, x_1) \neq x_0$.  Let $\sigma_0 \in \mathfrak{U}^{\kappa}$ be constantly $x_0$.  Let $\sigma_1 \in \mathfrak{U}^{\kappa}$ be given by

\[
\sigma_1(\gamma) = \left\{
\begin{array}{ll}
x_1
                                            & \text{if } \gamma \in  I_{0, \nu} \text{ for some }\nu \in J_0, \\
x_0                                        & \text{otherwise. }
\end{array}
\right.
\]

\noindent Clearly $\mathcal{E}_1[\sigma_0]_{\sim} = [\sigma_0]_{\sim}$ and $\mathcal{E}_1[\sigma_1]_{\sim} = [\tau_1]_{\sim}$ where

\[
\tau_1(\gamma) = \left\{
\begin{array}{ll}
\Av(x_0, x_1)
                                            & \text{if } \gamma \in  I_{\nu} \text{ for some }\nu \in J_1, \\
x_0                                        & \text{otherwise. }
\end{array}
\right.
\]

\noindent As $Y_{\alpha_{\nu}} \cap I_{0, \nu} = \emptyset$ for each $\nu \in J_1$ we see that $(\pi_{\alpha_{\nu}}\circ \Delta)\sigma_0 = (\pi_{\alpha_{\nu}} \circ \Delta)\sigma_1$, but on the other hand it is also the case that $\pi_{\alpha_{\nu}}\sigma_0  = x_0 \neq \Av(x_0, x_1) = \pi_{\alpha_{\nu}}\tau_1$ for all $\nu \in J_1$, contradicting $p_{\sim} \circ \Delta
 = \mathcal{E}_1 \circ p_{\sim}$.  Thus we let $J' = J'' \setminus J_1$ and the claim is true.

\end{proof}

\begin{lemma}  Suppose that $\Delta: \mathfrak{U}^{\kappa} \rightarrow \mathfrak{U}^{\kappa}$ is a homomorphism such that $p_{\sim} \circ \Delta = \mathcal{E}_1 \circ p_{\sim}$.  For each $J \subseteq \kappa$ with $|J| = \kappa$ there exists $J' \subseteq J$ with $|J'| = \kappa$ and such that for every $\nu \in J'$ there exists $\alpha_{\nu} \in J'$ such that the homomorphism $\pi_{\alpha_{\nu}} \circ \Delta$ does not have a strong support $Y_{\alpha_{\nu}}$ with $|Y_{\alpha_{\nu}}| < f(\nu)$.
\end{lemma}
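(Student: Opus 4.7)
The plan is to adapt the argument of Lemma \ref{Putittogether} to the averaged setting. I argue by contradiction, assuming there is $J\subseteq\kappa$ with $|J|=\kappa$ such that for every $\nu\in J$ and $\alpha\in I_{\nu}$ the map $\pi_{\alpha}\circ\Delta$ has strong support $Y_{\alpha}$ with $|Y_{\alpha}|<f(\nu)$. Lemma \ref{nonemptystrongsupp} lets me assume $Y_{\alpha}\neq\emptyset$ throughout on a cardinality-$\kappa$ subset, and Lemma \ref{fallingintocorrectplace} then yields $J'\subseteq J$ of cardinality $\kappa$ with $Y_{\alpha}\subseteq I_{\nu}$ and $Y_{\alpha}\cap I_{0,\nu}\neq\emptyset$ for every $\nu\in J'$, $\alpha\in I_{\nu}$. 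Setting $F_{\alpha}=Y_{\alpha}\cap I_{0,\nu}$ for $\nu\in J'$, $\alpha\in I_{\nu}$, and $F_{\alpha}=\emptyset$ otherwise, I have $|F_{\alpha}|<f(p(\alpha))$ and $F_{\alpha}\subseteq I_{0,p(\alpha)}$. Lemma \ref{sequences}(4) then delivers $\xi\in\kappa^+$, $\beta\in\kappa$, and $\{\alpha_{\nu}\}_{\nu\in\kappa\setminus\beta}$ with $F_{\alpha_{\nu}}\in\B_{\xi,\nu}$ and $\alpha_{\nu}\in I_{1,\nu}\setminus\Phi_{\xi,\nu}(F_{\alpha_{\nu}})$. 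For each $\nu\in J'\setminus\beta$ I choose $t_{\alpha_{\nu}}\in F_{\alpha_{\nu}}$.

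Fix $x_0\in\chi$ and let $\sigma$ be constantly $x_0$. Because $\Av(x_0,x_0)=x_0$, one easily checks $\mathcal{E}_1[\sigma]_{\sim}=[\sigma]_{\sim}$, so $\Delta\sigma\sim\sigma$, and after further thinning $J'$ I can assume $(\pi_{\alpha_{\nu}}\circ\Delta)\sigma=x_0$ for every $\nu\in J'\setminus\beta$. Using the strong support property at each $\alpha_{\nu}$ I pick $x_{t_{\alpha_{\nu}}}\in\chi$ so that modifying $\sigma$ to $\tau_0$ by $\tau_0(t_{\alpha_{\nu}})=x_{t_{\alpha_{\nu}}}$ (and $\tau_0=\sigma$ elsewhere) forces $(\pi_{\alpha_{\nu}}\circ\Delta)\tau_0\neq x_0$. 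Next, applying Lemma \ref{sequences}(3) to the set $\{t_{\alpha_{\nu}}:\nu\in J'\setminus\beta\}$ together with Lemma \ref{sequences}(2), I select $\xi^*\geq\xi$ and $\beta^*\geq\beta$ so that $\{t_{\alpha_{\nu}}\}\in\B_{\xi^*,\nu}$, $\B_{\xi,\nu}\subseteq\B_{\xi^*,\nu}$, and $\Phi_{\xi,\nu}\subseteq\Phi_{\xi^*,\nu}$ for all $\nu\in\kappa\setminus\beta^*$; this ensures I may compute a representative $\tau_1$ of $\mathcal{E}_1[\tau_0]_{\sim}$ at level $\xi^*$.

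The crux is to verify $\tau_1(\alpha_{\nu})=x_0$ for every $\nu\in J'\setminus\beta^*$. Let $A_{\alpha_{\nu}}$ and $A^*$ denote the atoms of $\B_{\xi,\nu}$ and $\B_{\xi^*,\nu}$ containing $\alpha_{\nu}$, and let $A_t$ denote the atom of $\B_{\xi,\nu}$ containing $t_{\alpha_{\nu}}$; note $A_t\subseteq F_{\alpha_{\nu}}$ since $F_{\alpha_{\nu}}\in\B_{\xi,\nu}$. By the coherence of $\Phi$ under refinement (from $\Phi_{\xi,\nu}\subseteq\Phi_{\xi^*,\nu}$) one has $A^*\subseteq A_{\alpha_{\nu}}\subseteq I_{1,\nu}$ and $\gimel_{\xi^*,\nu}(A^*)\subseteq\gimel_{\xi,\nu}(A_{\alpha_{\nu}})\subseteq I_{0,\nu}$. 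If $A_t$ coincided with $\gimel_{\xi,\nu}(A_{\alpha_{\nu}})$ then $\Phi_{\xi,\nu}(A_t)=A_{\alpha_{\nu}}\ni\alpha_{\nu}$, contradicting $\alpha_{\nu}\notin\Phi_{\xi,\nu}(F_{\alpha_{\nu}})$; hence $A_t$ and $\gimel_{\xi,\nu}(A_{\alpha_{\nu}})$ are disjoint atoms of $\B_{\xi,\nu}$, so $t_{\alpha_{\nu}}\notin\gimel_{\xi^*,\nu}(A^*)$. Since also $t_{\alpha_{\nu}}\in I_{0,\nu}$ is not in $A^*\subseteq I_{1,\nu}$, the function $\tau_0$ equals $\sigma=x_0$ on both $A^*$ and its $\gimel$-partner, giving $\tau_1(\alpha_{\nu})=\Av(x_0,x_0)=x_0$. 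But $\Delta\tau_0(\alpha_{\nu})\neq x_0$ for every $\nu\in J'\setminus\beta^*$, so $[\Delta\tau_0]_{\sim}\neq[\tau_1]_{\sim}=\mathcal{E}_1[\tau_0]_{\sim}$, contradicting $p_{\sim}\circ\Delta=\mathcal{E}_1\circ p_{\sim}$. The main obstacle is precisely this atom-tracking step: ensuring that both the atom containing $\alpha_{\nu}$ and its $\gimel$-partner avoid $t_{\alpha_{\nu}}$ upon refinement, which crucially uses the exclusion $\alpha_{\nu}\notin\Phi_{\xi,\nu}(F_{\alpha_{\nu}})$ furnished by Lemma \ref{sequences}(4) together with the coherent behavior of the $\jmath$-mappings under refinement given by Lemma \ref{sequences}(2).
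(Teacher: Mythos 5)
Your proposal is correct and follows essentially the same route as the paper: contradiction via Lemmas \ref{nonemptystrongsupp} and \ref{fallingintocorrectplace}, the sets $F_{\alpha}=Y_{\alpha}\cap I_{0,\nu}$ fed into Lemma \ref{sequences}(4), perturbation of the constant function at the points $t_{\alpha_{\nu}}$, and comparison of $\Delta\tau_0$ with a representative of $\mathcal{E}_1[\tau_0]_{\sim}$ computed at a level $\xi^*$ above $\xi$. Your atom-tracking verification that $\tau_1(\alpha_{\nu})=x_0$ is just a more explicit rendering of the paper's direct formula for $\tau_1$, and your extra thinning step to guarantee $(\pi_{\alpha_{\nu}}\circ\Delta)\sigma=x_0$ makes explicit a point the paper leaves implicit.
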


\begin{proof}  Suppose for contradiction that there is some $J \subseteq \kappa$ with $|J| = \kappa$ such that for every $\nu \in J$ and $\alpha \in I_{\nu}$ the homomorphism $\pi_{\alpha} \circ \Delta$ has strong support $Y_{\alpha}$ with $|Y_{\alpha}| < f(\nu)$.  By Lemma \ref{nonemptystrongsupp} we can assume without loss of generality that $Y_{\alpha} \neq \emptyset$ for each $\alpha \in I_{\nu}$ with $\nu \in J$.  By Lemma \ref{fallingintocorrectplace} we have a $J' \subseteq J$ with $|J'| = \kappa$ and for all $\nu \in J'$ and $\alpha \in I_{\nu}$ we get $Y_{\alpha} \subseteq I_{\nu}$ and $Y_{\alpha} \cap I_{0, \nu} \neq \emptyset$.  Let collection $\{F_{\alpha}\}_{\alpha \in \kappa}$ be defined by $F_{\alpha} = Y_{\alpha} \cap I_{0, \nu}$ when $\alpha \in I_{\nu}$ and $\nu \in J'$ and let $F_{\alpha} = \emptyset$ when $\alpha \in I_{\nu}$ for $\nu \in \kappa \setminus J'$.  Now we have $|F_{\alpha}| < f(p(\alpha))$ and $F_{\alpha} \subseteq I_{0, p(\alpha)}$ for each $\alpha \in \kappa$.  Now by Lemma \ref{sequences} part (4) we have $\xi \in \kappa^+$ and $\beta \in \kappa$ and sequence $\{\alpha_{\nu}\}_{\nu \in \kappa \setminus \beta}$ such that for each $\nu \in \kappa \setminus \beta$ we have both $F_{\alpha_{\nu}} \in \B_{\xi, \nu}$ and $\alpha_{\nu} \in I_{1, \nu} \setminus \Phi_{\xi, \nu}(F_{\alpha_{\nu}})$.

For each $\nu \in J'$ pick $t_{\alpha_{\nu}} \in F_{\alpha_{\nu}} \cap I_{0, \nu}$.  Let $x_0 \in \chi$ and $\sigma \in \mathfrak{U}^{\kappa}$.  Since $F_{\alpha_{\nu}}$ is a strong support for $\pi_{\alpha_{\nu}} \circ \Delta$ it is possible to select $x_{t_{\alpha_{\nu}}} \in \chi$ such that when $\tau \in \mathfrak{U}^{\kappa}$ satisfies $\tau \upharpoonright (F_{\alpha_{\nu}}\setminus \{t_{\alpha_{\nu}}\}) = \sigma \upharpoonright (F_{\alpha_{\nu}}\setminus \{t_{\alpha_{\nu}}\})$ and $\tau(t_{\alpha_{\nu}}) = x_{t_{\alpha_{\nu}}}$ we have $(\pi_{\alpha_{\nu}} \circ \Delta)\tau \neq (\pi_{\alpha_{\nu}}\circ \Delta)\sigma$.  Let $\tau_0 \in \mathfrak{U}^{\kappa}$ be given by

\[
\tau_0(\gamma) = \left\{
\begin{array}{ll}
x_{t_{\alpha_{\nu}}}
                                            & \text{if } \gamma =t_{\alpha_{\nu}}, \\
x_0                                        & \text{otherwise. }
\end{array}
\right.
\]

\noindent By Lemma \ref{sequences} (2) and (3) we select $\xi^* \in \kappa^+ \setminus \xi$ and $\beta^* \in \kappa \setminus \beta$ such that for any $\nu \in \kappa \setminus \beta^*$ we have $\{t_{\alpha_{\nu}}\} \in \B_{\xi^*, \nu}$ and $\B_{\xi, \nu} \subseteq \B_{\xi^*, \nu}$ and $\Phi_{\xi, \nu} \subseteq \Phi_{\xi^*, \nu}$.  Take $\tau_1 \in \mathfrak{U}^{\kappa}$ to be

\[
\tau_0(\gamma) = \left\{
\begin{array}{ll}
\Av(x_{t_{\alpha_{\nu}}}, x_0)
                                            & \text{if } \gamma =t_{\alpha_{\nu}} \text{ or }\Phi_{\xi^*, \nu}(\{t_{\alpha_{\nu}}\}) = \{\gamma\}, \\
x_0                                        & \text{otherwise. }
\end{array}
\right.
\]

\noindent It is clear that $\mathcal{E}_1[\tau_0]_{\sim} = [\tau_1]_{\sim}$, but we also know $(\pi_{\alpha_{\nu}} \circ \Delta)\tau_0 \neq x_0 = \pi_{\alpha_{\nu}}\tau_1$ for all $\nu \in J' \setminus \beta^*$, and since $|J' \setminus \beta^*| = \kappa$ we get $[\Delta \tau_0]_{\sim} \neq [\tau_1]_{\sim} = \mathcal{E}_1[\tau_0]_{\sim}$, which contradicts $p_{\sim} \circ \Delta = \mathcal{E}_1 \circ p_{\sim}$.

\end{proof}

\begin{lemma}\label{abitmorespecific}  Suppose that $\Delta: \mathfrak{U}^{\kappa} \rightarrow \mathfrak{U}^{\kappa}$ is a homomorphism such that $p_{\sim} \circ \Delta = \mathcal{E}_1 \circ p_{\sim}$.  The set

\begin{center}

$\{\alpha \in \kappa: \pi_{\alpha} \circ \Delta \text{ has no strong support of cardinality of }\leq \lambda\}$

\end{center}

\noindent has cardinality $\kappa$ for every $\lambda < \kappa$.

\end{lemma}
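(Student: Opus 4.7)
The plan is to mirror the proof of Lemma \ref{morespecifically} almost verbatim, using the immediately preceding lemma (the $\mathcal{E}_1$-analogue of Lemma \ref{Putittogether}) in place of Lemma \ref{Putittogether} itself. All the heavy lifting has already been done in establishing that lemma, so this final step is essentially bookkeeping.

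First I would recall from Remark \ref{Whatiskappa} that $\dagger(\kappa, f, g)$ forces $\lim_{\nu \to \kappa} f(\nu) = \kappa$. Fix $\lambda < \kappa$. Define
\[
J = \{\nu \in \kappa : \lambda < f(\nu)\}.
\]
Since $\kappa$ is regular (and even inaccessible) and $f$ eventually exceeds any given cardinal below $\kappa$, the complement of $J$ in $\kappa$ is bounded, so $|J| = \kappa$.

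Now I apply the preceding lemma to $J$: there exists $J' \subseteq J$ with $|J'| = \kappa$ such that for each $\nu \in J'$ there is an $\alpha_\nu \in I_\nu$ for which $\pi_{\alpha_\nu} \circ \Delta$ admits no strong support $Y_{\alpha_\nu}$ with $|Y_{\alpha_\nu}| < f(\nu)$. Since $\nu \in J$ means $\lambda < f(\nu)$, any hypothetical strong support of cardinality $\leq \lambda$ would in particular have cardinality $< f(\nu)$, and this is forbidden by the choice of $\alpha_\nu$. Hence each $\alpha_\nu$ lies in the set
\[
\{\alpha \in \kappa : \pi_\alpha \circ \Delta \text{ has no strong support of cardinality} \leq \lambda\}.
\]
The map $\nu \mapsto \alpha_\nu$ is injective because the $I_\nu$ partition $\kappa$, so this set has cardinality at least $|J'| = \kappa$, and therefore exactly $\kappa$.

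There is no real obstacle here: the only nontrivial content was the previous lemma, which produces coordinates without small strong supports inside each interval $I_\nu$ for a cofinal family of $\nu$, and the only remaining task is to restrict attention to those $\nu$ where the threshold $f(\nu)$ already dominates the prescribed $\lambda$. Since $f(\nu) \to \kappa$, restricting to such $\nu$ is free.
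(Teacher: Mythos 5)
Your proof is correct and is essentially identical to the paper's argument: the paper simply says the proof of Lemma \ref{abitmorespecific} is that of Lemma \ref{morespecifically} with obvious modifications, and Lemma \ref{morespecifically} is proved exactly by taking $J = \{\nu \in \kappa : \lambda < f(\nu)\}$ and invoking the analogue of Lemma \ref{Putittogether}. No issues.
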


\begin{proof}  The proof is that of Lemma \ref{morespecifically} with obvious modifications.

\end{proof}

\begin{lemma}\label{itisisomorphic}  The image of $\mathcal{E}_1$ is isomorphic to $\mathfrak{U}^{\kappa}/\sim$.
\end{lemma}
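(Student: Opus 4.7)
The plan is to mirror the structure of Lemma \ref{itisiso}, with modifications to accommodate the averaging behavior of $\mathcal{E}_1$. First, I would fix bijections $P_{\nu}: I_{0, \nu} \to I_{\nu}$, let $P = \bigcup_{\nu \in \kappa} P_{\nu}: \bigcup_{\nu \in \kappa} I_{0, \nu} \to \kappa$, define $H: \mathfrak{U}^{\kappa} \to \mathfrak{U}^{\kappa}$ by $(H(\sigma))(\alpha) = \sigma(P^{-1}(\alpha))$, and let $\overline{H}: \mathfrak{U}^{\kappa}/\sim \to \mathfrak{U}^{\kappa}/\sim$ be the induced map. The claim will be that $\overline{H} \upharpoonright \im(\mathcal{E}_1)$ is an isomorphism onto $\mathfrak{U}^{\kappa}/\sim$.

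For surjectivity, given $[\tau]_{\sim}$, I first define a preliminary $\sigma^* \in \mathfrak{U}^{\kappa}$ by $\sigma^*(\alpha) = \tau(P_{\nu}(\alpha))$ for $\alpha \in I_{0, \nu}$ and $\sigma^*(\alpha) = x_0$ (fixed) for $\alpha \in I_{1, \nu}$. Property (3) of Lemma \ref{sequences} yields $\eta \in \kappa^+$ and $\beta \in \kappa$ such that $(\sigma^*)^{-1}(\{x\}) \cap I_{\nu} \in \B_{\eta, \nu}$ for all $x \in \chi$ and $\nu \in \kappa \setminus \beta$, so $\sigma^*$ is constant on each atom of $\B_{\eta, \nu}$. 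I then modify $\sigma^*$ into $\sigma$ by retaining $\sigma \upharpoonright I_{0, \nu} = \sigma^* \upharpoonright I_{0, \nu}$ and setting $\sigma \upharpoonright A'$ to be constantly equal to the constant value of $\sigma^* \upharpoonright \gimel_{\eta, \nu}(A')$, for every atom $A' \subseteq I_{1, \nu}$ of $\B_{\eta, \nu}$. Applying Construction \ref{usingaverage} to $\sigma$ with the same $\eta$ and $\beta$, every atom pair $(A, \gimel_{\eta, \nu}(A))$ carries the same constant value $x_A$ on both atoms, so $\mathcal{E}_1(\sigma) \upharpoonright A = \Av(x_A, x_A) = x_A = \sigma \upharpoonright A$, giving $\mathcal{E}_1(\sigma) \sim \sigma$ and hence $[\sigma]_{\sim} \in \im(\mathcal{E}_1)$. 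A direct check shows $H(\sigma) \upharpoonright I_{\nu} = \tau \upharpoonright I_{\nu}$ for $\nu \in \kappa \setminus \beta$, so $\overline{H}[\sigma]_{\sim} = [\tau]_{\sim}$.

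For injectivity, suppose $[\sigma_0]_{\sim}, [\sigma_1]_{\sim} \in \im(\mathcal{E}_1)$ satisfy $\overline{H}[\sigma_0]_{\sim} = \overline{H}[\sigma_1]_{\sim}$; then $\sigma_0 \upharpoonright I_{0, \nu} = \sigma_1 \upharpoonright I_{0, \nu}$ for all $\nu$ beyond some $\overline{\beta}$. Because each $[\sigma_i]_{\sim} = \mathcal{E}_1[\sigma_i]_{\sim}$ (as $\mathcal{E}_1$ is a projection), we have $\sigma_i \sim \mathcal{E}_1(\sigma_i)$ with the latter computed using some $\eta_i \in \kappa^+$. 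I then pick $\eta \in \kappa^+$ with $\eta > \eta_0, \eta_1$ and invoke the well-definedness argument from Lemma \ref{homindeed} to recompute $\mathcal{E}_1(\sigma_i)$ using $\eta$; this output is constant on each atom pair $(A, \gimel_{\eta, \nu}(A))$ of $\B_{\eta, \nu}$, and hence so is $\sigma_i$ for cofinitely many $\nu$. Consequently $\sigma_i \upharpoonright I_{1, \nu}$ is determined by $\sigma_i \upharpoonright I_{0, \nu}$ via the $\gimel_{\eta, \nu}$-involution, so the agreement on $I_{0, \nu}$ propagates to agreement on $I_{\nu}$, giving $\sigma_0 \sim \sigma_1$.

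The main obstacle I expect is the injectivity step, where each $\sigma_i$ comes with its own natural choice of $\eta_i$ dictated by its level sets, and I need to harmonize these into a common $\eta$ large enough that the $\gimel_{\eta, \nu}$-symmetry inherited from $\mathcal{E}_1 \circ \mathcal{E}_1 = \mathcal{E}_1$ simultaneously constrains both $\sigma_0$ and $\sigma_1$. The well-definedness argument in Lemma \ref{homindeed} should be the key tool that makes this harmonization legitimate.
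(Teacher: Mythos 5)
Your proposal is correct and follows essentially the same route as the paper: the same transfer map $\overline{H}$ built from bijections $P_\nu: I_{0,\nu} \to I_\nu$, surjectivity by extending a pullback of $\tau$ from $\bigcup_\nu I_{0,\nu}$ to a $\gimel$-symmetric element fixed by $\mathcal{E}_1$, and injectivity by using that members of $\im(\mathcal{E}_1)$ are eventually constant on atom pairs so that agreement on $I_{0,\nu}$ propagates to all of $I_\nu$. Your injectivity step in fact spells out the $\gimel$-symmetry of elements of the image more explicitly than the paper's rather terse version, but the underlying argument is the same.
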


\begin{proof}  We begin as in Lemma \ref{itisiso}.  For $\nu \in \kappa$ let $P_{\nu}: I_{0, \nu} \rightarrow I_{\nu}$ be a bijection and let $P: \bigcup_{\nu \in \kappa} I_{0, \nu} \rightarrow \kappa$ be the bijection $\bigcup_{\nu \in \kappa}P_{\nu}$.  Then $H: \mathfrak{U}^{\kappa} \rightarrow \mathfrak{U}^{\kappa}$ is given by $(H(\sigma))(\alpha) = \sigma(P^{-1}(\alpha))$.  It is clear that $H$ is a homomorphism.  Let $\overline{H}: \mathfrak{U}^{\kappa}/\sim \rightarrow \mathfrak{U}^{\kappa}/\sim$ be given by $\overline{H}([\sigma]_{\sim}) = [H(\sim)]_{\sim}$ and $\overline{H}$ is also a homomorphism.

We'll show that $\overline{H} \upharpoonright \im(\mathcal{E}_1)$ is an isomorphism from $\im(\mathcal{E}_1)$ to $\mathfrak{U}^{\kappa}$.  Letting $\tau \in \mathfrak{U}^{\kappa}$ be given.  We'll define an element $\sigma \in \mathfrak{U}^{\kappa}$ in stages.  Let to be such that $\sigma \upharpoonright I_{0, \nu}$ is given by $\sigma(\alpha) = \tau(P_{\nu}(\alpha))$.  By Lemma \ref{sequences} part (3),  $|\chi| < \kappa$ , and the fact that $\kappa$ is regular we can select $\xi \in \kappa^+$ and $\beta \in \kappa$ large enough that $\nu \in \kappa \setminus \beta$ and $x \in \chi$ imply that $K_{x, \nu} := \{\alpha \in I_{0, \nu}: \sigma(\alpha) = x\} \in \B_{\xi, \nu}$.  Then $\Phi_{\xi, \nu}(K_{x, \nu}) \in \B_{\xi, \nu}$ for all $\nu \in \kappa \setminus \beta$.  For each $A \in \Atoms(\B_{\xi, \nu})$ with $A \subseteq I_1, \nu$ we let $\sigma \upharpoonright A$ be the constant $x_0$ where $\sigma\upharpoonright \gimel_{\xi, \nu}(A)$ is $x_0$.  Clearly $\mathcal{E}_1[\sigma]_{\sim} = [\sigma]_{\sim}$ and that $\overline{H}([\sigma]_{\sim}) = [\tau]_{\sim}$, so $\overline{H} \upharpoonright \im(\mathcal{E}_1)$ is onto $\mathfrak{U}^{\kappa}/\sim$.

To see that $\overline{H} \upharpoonright \im(\mathcal{E}_1)$ is injective, suppose that $[\sigma_0]_{\sim}, [\sigma_1]_{\sigma} \in \im(\mathcal{E}_1)$ is such that $\overline{H}([\sigma_0]_{\sim}) = \overline{H}([\sigma_1]_{\sim})$.  Then $H(\sigma_0) \sim H(\sigma_1)$.  Select $\xi \in \kappa^+$ and $\beta \in \kappa$ large enough so that for $x \in \chi$ and $\nu \in \kappa \setminus \beta$ we have $K_{x, \nu} := \sigma_0^{-1}(\{x\}) \cap I_{0, \nu} \in \B_{\xi, \nu}$.  Define $\sigma$ as in the previous paragraph and it is clear that $[\sigma]_{\sim} = \mathcal{E}_1[\sigma]_{\sim} = \mathcal{E}_1[\sigma_0]_{\sim} = \mathcal{E}_1[\sigma_1]_{\sim}$.  Then $\overline{H}$ is injective.
\end{proof}

\begin{definition}\label{changing}  We say an average $\Av: \mathfrak{U} \times \mathfrak{U} \rightarrow \mathfrak{U}$ is \emph{changing} provided there exist $x_0, x_1 \in \mathfrak{U}$ such that $x_0 \neq \Av(x_0, x_1) \neq x_1$.
\end{definition}

\begin{theorem}\label{combinginConstruction2}  Suppose that $\kappa$ is a inaccesible, $\dagger(\kappa, f, g)$, and that $\mathfrak{U} = (\chi, \mathcal{S})$ is a universal algebra with $2 \leq |\chi| < \kappa$ and that $\Av$ is an average for $\mathfrak{U}$.  Then there is a homomorphic projection $\mathcal{E}_1: \mathfrak{U}^{\kappa}/\sim \rightarrow \mathfrak{U}^{\kappa}/\sim$, with the image of $\mathcal{E}_1$ isomorphic to $\mathfrak{U}^{\kappa}/\sim$, such that any homomorphism $\Delta: \mathfrak{U}^{\kappa} \rightarrow \mathfrak{U}^{\kappa}$ making the following diagram commute

$$\xymatrix{\mathfrak{U}^{\kappa} \ar[r]^{\Delta} \ar[d]^{p_{\sim}}&
\mathfrak{U}^{\kappa} \ar[d]^{p_{\sim}} \\ \mathfrak{U}^{\kappa}/\sim
\ar[r]^{\mathcal{E}_1} & \mathfrak{U}^{\kappa}/\sim    }$$

\noindent has

\begin{center}

$\{\alpha \in \kappa: \pi_{\alpha} \circ \Delta\text{ has no strong support of cardinality} \leq \lambda\}$

\end{center}

\noindent of cardinality $\kappa$ for every $\lambda < \kappa$.  Moreover if $\Av$ is changing then there exists $\sigma \in \mathfrak{U}^{\kappa}$ such that when $\mathcal{E}_1[\sigma]_{\sim} = [\tau]_{\sim}$ we get $\{\alpha \in \kappa: \sigma(\alpha) = \tau(\alpha)\} \equiv_{\kappa} \emptyset$.
\end{theorem}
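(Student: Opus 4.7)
The plan is to concatenate the lemmas already proved in this section. Construction \ref{usingaverage} defines $\mathcal{E}_1$; Lemma \ref{homindeed} says it is a well-defined homomorphic projection on $\mathfrak{U}^{\kappa}/\sim$; Lemma \ref{itisisomorphic} gives $\im(\mathcal{E}_1) \simeq \mathfrak{U}^{\kappa}/\sim$; and Lemma \ref{abitmorespecific} supplies the strong-support condition for every $\Delta$ completing the diagram. Hence the first three assertions follow immediately.

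For the moreover clause, I would choose $x_0, x_1 \in \chi$ witnessing $x_0 \neq \Av(x_0, x_1) \neq x_1$ and take $\sigma \in \mathfrak{U}^{\kappa}$ defined by $\sigma(\alpha) = x_0$ for $\alpha \in I_{0, \nu}$ and $\sigma(\alpha) = x_1$ for $\alpha \in I_{1, \nu}$ (for all $\nu \in \kappa$). Then I would run Construction \ref{usingaverage} on this $\sigma$: the fibres $\sigma^{-1}(\{x_0\}) \cap I_{\nu} = I_{0, \nu}$ and $\sigma^{-1}(\{x_1\}) \cap I_{\nu} = I_{1, \nu}$ both lie in the base algebra $\B_{\Psi, \nu}$, which is contained in every $\B_{\eta, \nu}$ by induction hypothesis 1(c)(i), so the construction applies with any $\eta \geq 1$ and some $\beta < \kappa$.

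The key observation is that, since $I_{0, \nu}, I_{1, \nu} \in \B_{\eta, \nu}$, every atom $A \in \Atoms(\B_{\eta, \nu})$ is contained in either $I_{0, \nu}$ or $I_{1, \nu}$, and the involution $\gimel_{\eta, \nu}$ swaps these two cases. Thus on any atom $A$ the values $\sigma \upharpoonright A$ and $\sigma \upharpoonright \gimel_{\eta, \nu}(A)$ are the constants $x_0$ and $x_1$ in some order, so symmetry of $\Av$ forces $\tau \upharpoonright A$ to be constantly $\Av(x_0, x_1)$ regardless of which side $A$ falls on. Consequently $\tau \upharpoonright I_{\nu}$ is constantly $\Av(x_0, x_1)$ for every $\nu \in \kappa \setminus \beta$, while $\sigma$ takes only the values $x_0, x_1$, both distinct from $\Av(x_0, x_1)$. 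Inaccessibility of $\kappa$ together with $|I_{\nu}| = f(\nu) < \kappa$ forces $|\bigcup_{\nu < \beta} I_{\nu}| < \kappa$, so $\{\alpha \in \kappa : \sigma(\alpha) = \tau(\alpha)\} \equiv_{\kappa} \emptyset$; this conclusion is preserved under any choice of representative of $\mathcal{E}_1[\sigma]_{\sim}$ since two representatives differ on a set of cardinality $< \kappa$. The only point requiring care is that every atom of $\B_{\eta, \nu}$ respects the bipartition $I_{\nu} = I_{0, \nu} \sqcup I_{1, \nu}$, and this is free from $\B_{\Psi, \nu} \subseteq \B_{\eta, \nu}$, so there is no real obstacle.
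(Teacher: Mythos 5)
Your proposal is correct and follows the paper's proof essentially verbatim: the paper likewise dispatches the first three assertions by citing the preceding lemmas and, for the moreover clause, uses exactly the same $\sigma$ (constantly $x_0$ on $\bigcup_{\nu}I_{0,\nu}$ and $x_1$ on $\bigcup_{\nu}I_{1,\nu}$), observing that $\mathcal{E}_1[\sigma]_{\sim}$ is represented by the constant function $\Av(x_0,x_1)$. Your added detail about atoms of $\B_{\eta,\nu}$ respecting the bipartition and $\gimel_{\eta,\nu}$ swapping the two sides correctly fills in what the paper leaves as ``it is clear.''
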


\begin{proof}  Everything has been checked already except the claim in the last sentence.  Selecting $x_0, x_1 \in \chi$ witnessing that $\Av$ is changing, we let $\sigma \in \mathfrak{U}$ be given by

\[
\sigma(\alpha) = \left\{
\begin{array}{ll}
x_0
                                            & \text{if } \alpha \in \bigcup_{\nu \in \kappa}I_{0, \nu}, \\
x_1                                        & \text{if } \alpha \in \bigcup_{\nu \in \kappa}I_{1, \nu}
\end{array}
\right.
\]

\noindent and it is clear that letting $\tau \in \mathfrak{U}^{\kappa}$ be constantly $\Av(x_0, x_1)$ we have $\mathcal{E}_1[\sigma]_{\sim} = [\tau]_{\sim}$.

\end{proof}

\end{section}

%
%
%

\end{document}